\newtheorem{theorem}{Theorem}
\newtheorem{lemma}{Lemma}
\newtheorem{remark}{Remark}
\newcommand{\nab}{\langle\nabla\rangle}
\newcommand{\tn}[1]{\vertiii{#1}}
\newcommand{\n}[2]{{\left\| #1 \right\|}_{#2}}
\newcommand{\f}[2]{\frac{#1}{#2}}
\newcommand{\lan}[1]{\left\langle #1\right\rangle}
\newcommand{\pr}[1]{\left(#1\right)}
\newcommand{\wh}[1]{\widehat{#1}}
\newcommand{\wt}[1]{\widetilde{#1}}
\newcommand{\ve}{\varepsilon}
\newcommand{\si}{\sigma}
\newcommand{\vp}{\varphi}
\newcommand{\br}{{\mathbb R}}
\newcommand{\bt}{{\mathbb T}}
\newcommand{\rr}{\mathcal R}
\newcommand{\cs}{\mathcal S}
\newcommand{\cm}{\mathcal M}
\newcommand{\cf}{\mathcal F}
\newcommand{\cn}{{\mathcal N}}
\newcommand{\IM}{\textnormal{Im}\,}
\newcommand{\p}{\partial}
\newcommand{\ds}{\displaystyle}
\newcommand{\vertiii}[1]{{\left\vert\kern-0.25ex\left\vert\kern-0.25ex\left\vert #1 
    \right\vert\kern-0.25ex\right\vert\kern-0.25ex\right\vert}}
\begin{document}

\title
[Local Well-posedness of Periodic KdV]
{On Local Well-posedness of the Periodic Korteweg-de Vries Equation Below $H^{-\frac{1}{2}}(\mathbb{T})$}

\author{Ryan McConnell, Seungly Oh}
\thanks{The first author was partially supported by the NSF grant  DMS-2154031.}
\date{\today}

\subjclass{35Q53, 37L50, 42B37}

\keywords{periodic korteweg-de Vries equation, KdV, normal form, differentiation by parts}

\begin{abstract} 
We utilize a modulation restricted normal form approach to establish local well-posedness of the periodic Korteweg-de Vries equation in $H^s(\mathbb{T})$ for $s> -\frac23$. This work creates an analogue of the mKdV result by Nakanishi, Takaoka, and Tsutsumi, \cite{NTT} for KdV, extending the currently best-known result of $s \geq -\frac12$ without utilizing the theory of complete integrability.
\end{abstract}

\maketitle
\section{Introduction}
We consider the real-valued periodic Korteweg-de Vries (KdV) equation:
\begin{equation}\label{eq:kdv original}
\left|\begin{array}{l} u_t + u_{xxx}= (u^2)_x\\\left.u\right|_{t=0}  = u_0 \in H^{s}(\bt) \end{array}\right.
\end{equation}
where we assume the mean-zero condition $\int_{\mathbb{T}} u_0\, dx =0$ and  $s\leq -1/2$. Equation~\eqref{eq:kdv original} satisfies conservation of mean,
\[
\int_\mathbb{T}u(x,t)\,dx = \int_\mathbb{T} u_0(x)\,dx,
\]
and hence we may assume that $u$ also satisfies the mean-zero condition.\\

To study local well-posedness for the periodic KdV equation, Bourgain \cite{Bour1, Bour2} created an $L^2_{t,x}$-based Sobolev-type space where the Fourier weight was adapted to the Airy free group.  This space is referred to as $X^{s,b}$ space, whose norm is defined by:
\[
\|u\|_{X^{s,b}} := \|e^{it\partial_x^3}u\|_{H^b_tH^s_x} = \|\langle \tau+n^3\rangle^b \langle n\rangle^s\widehat{u}(\tau,n)\|_{L^2_\tau\ell^2_n}.
\]
In order to prove the local-wellposedness using a standard contraction argument, one relies on the following bilinear estimate:
\begin{equation}\label{Eq: introKDVbad}
\left\|\frac{\langle n\rangle^{s}}{\langle \tau+n^3\rangle^{\frac12}}\cf_{t,x} \left[\p_x u^2\right](\tau,n)\right\|_{L^2_\tau\ell^2_n}\lesssim \|u\|_{X^{s,\frac12}}^2
\end{equation}
where $\cf_{t,x}$ represents the Fourier transform in $t$ and $x$-variables.  Utilizing this, Bourgain \cite{Bour2} proved that \eqref{eq:kdv original} is locally and globally well-posed in $H^s$ for $s\geq 0$. Kenig, Ponce, and Vega  \cite{KPV} extended this well-posedness result to  $s > -\f{1}{2}$, and  Colliander, Keel, Staffiliani, Takaoka, and Tao \cite{I1} established that the local well-posedness holds at the endpoint $s = -\f{1}{2}$ and also that the equation is globally well-posed for $s \geq -\f{1}{2}$.  This is currently the best well-posedness result for the periodic KdV without using the theory of complete integrability.  Failure of the bilinear estimate \eqref{Eq: introKDVbad} for $s<-\f{1}{2}$ necessitates a drastically different approach for studying local well-posedness in this regime.\\

On the other hand, the completely integrable structure for \eqref{eq:kdv original} is well-known, as it is the original model used in the development of the inverse-scattering method for solving nonlinear partial differential equations.  Due to this structure and symmetry inherent in KdV, a better well-posedness result is available.  In \cite{KT}, Kappeler and Topalov utilized the theory of inverse scattering method to prove that \eqref{eq:kdv original} is globally well-posed for $s\geq -1$.  In \cite{mol}, Molinet proved that \eqref{eq:kdv original} is locally illposed for $s<-1$, finally settling the question of sharp range of well-posedness for this equation.\\

The periodic modified KdV (mKdV) equation is closely related to \eqref{eq:kdv original} as the Miura transform ($u\mapsto \partial_xu^2+u^2$, \cite{MR252825}) maps solutions of the defocusing mKdV to those of the KdV.  Due to this transform, one expects results for the mKdV to again hold for the KdV, with the regularity lowered by exactly $1$.  In fact, Colliander, Keel, Staffiliani, Takaoka, and Tao showed the global well-posedness of mKdV for $s \geq \f{1}{2}$, which confirms this expectation.  Further, Kappeler and Topalov's inverse scattering approach \cite{KT} shows global well-posedness of mKdV for $s\geq 0$, and Molinet \cite{mol} proved the illposedness of mKdV for $s<0$.\\

While, heuristically, well-posedness results for the mKdV at level $s$ \textit{should} imply similar results for the KdV at level $s-1$, this connection cannot be immediately claimed since the Miura map is not invertible.  For studying the mKdV equation, we work with regularity $s\geq 0$, where necessary harmonic analysis estimates such as \eqref{Eq: introKDVbad} is easier to obtain.  On the other hand, such estimates become more technical (or impossible) when $s<0$.\\

It is remarkable that better results are known for local well-posedness of the periodic mKdV, for which there is presently no KdV counterpart.  In \cite{TT}, Takoaka, and Tsutsumi proved the local well-posedness of mKdV for $s> \f{3}{8}$ without relying on complete integratibility; and Nakanishi, Takoaka, and Tsutsumi \cite{NTT} extended this result to $s> \f{1}{3}$.  In the latter, the authors also show the existence of solutions for $s>\f{1}{4}$ without uniqueness.  In a separate theorem, uniqueness was recovered for $s> \f{1}{4}$ by imposing an additional condition on the initial data.\\

To appreciate these results fully, we should mention the work by Christ, Colliander and Tao \cite{cct} which shows that the solution map of periodic mKdV (respectively, that of the periodic KdV) is not uniformly continuous when $s<\f{1}{2}$ (respectively, $s< -\f{1}{2}$).  Since a standard contraction map argument for well-posedness implies uniform continuity for the solution map, this means that contraction map cannot be used in this setting.  To obtain well-posedness result of mKdV below $s=\f{1}{2}$, the authors constructed an $X^{s,b}$-type space with a Fourier weight dependent on the initial data.  This construction aided in the removal of a resonant nonlinear interaction, and we will follow a similar approach to establish the local well-posedness of periodic KdV below $s=-\f{1}{2}$.\\

As mentioned previously, the primary goal of this manuscript is to establish an analogue of this mKdV result for the periodic KdV equation.  Theorem~\ref{Theorem: 1} below yields a KdV analogue of mKdV local well-posedness \cite{NTT}, but with a caveat of small-data restriction.  We do not pursue the analogue of non-unique (or conditionally unique) existence for $s> \f{1}{4}$.  While we adapt the use of Bourgain space developed in \cite{TT, NTT}, we encounter unique challenges and complications, especially when obtaining multilinear estimates, which are quite different than those in the mKdV setting.  We will develop a novel approach to tackle these issues.  Our main result is summarized in the following:
 
\begin{theorem}\label{Theorem: 1}
    Let $u_0\in H^s(\mathbb{T})$ be real-valued and $-\f{2}{3} < s \leq -\f{1}{2}$. Given that $\n{u_0}{H^s}$ is sufficiently small, there is a space $Y = Y(u_0)$ and a time $T = T(\|u_0\|_{H^s}) > 0$ such that there is a unique solution $u\in C^0_t([0,T], H^{s}_x(\mathbb{T}))\cap Y$ to \eqref{eq:kdv original}. Moreover, the data-to-solution map is continuous.
\end{theorem}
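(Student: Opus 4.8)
The plan is to solve the Duhamel formulation
\[
u(t) = e^{-t\p_x^3}u_0 + \int_0^t e^{-(t-t')\p_x^3}\,\p_x\!\bigl(u(t')^2\bigr)\,dt'
\]
by a contraction argument in a data-adapted resolution space $Y = Y(u_0)$ built on the Bourgain space $X^{s,1/2}$, together with the usual $\langle\tau+n^3\rangle^{-1}$-type companion norm controlling the Duhamel operator and a $b>\tfrac12$ component furnishing the embedding $Y(u_0)\hookrightarrow C^0_t([0,T],H^s_x(\bt))$. Since $s\le-\tfrac12$, the bilinear estimate \eqref{Eq: introKDVbad} is unavailable, so the quadratic nonlinearity must be rewritten before any fixed point is attempted. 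Passing to the interaction representation $w = e^{t\p_x^3}u$, the equation becomes, on the Fourier side,
\[
\p_t\wh w(t,n) = in\!\!\sum_{\substack{n_1+n_2=n\\ n_1,n_2\neq0}}\!\! e^{3i t\, n n_1 n_2}\,\wh w(t,n_1)\,\wh w(t,n_2),
\]
where we used the resonance identity $n^3-n_1^3-n_2^3 = 3nn_1n_2$ (valid on $n=n_1+n_2$) and the mean-zero condition; since $nn_1n_2$ cannot vanish on this sum, the $n=0$ output being killed by $\p_x$, there is \emph{no exact quadratic resonance}, and the failure of \eqref{Eq: introKDVbad} is a near-resonant, low-regularity phenomenon.

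\textbf{Step 1 (differentiation by parts).} Fix a threshold $M = M(\n{u_0}{H^s})$. On the region where the resonance dominates the modulations, $|nn_1n_2|\gtrsim M\max_j\langle\sigma_j\rangle$ with $\sigma_j = \tau_j+n_j^3$, integrate by parts in $t'$ using $e^{3it'nn_1n_2} = \tfrac{1}{3inn_1n_2}\p_{t'}e^{3it'nn_1n_2}$; on the complement, some $\langle\sigma_j\rangle$ is large relative to $|nn_1n_2|$ and the corresponding piece obeys an \eqref{Eq: introKDVbad}-type bound which, crucially, now holds for all $s>-\tfrac23$. Each such integration by parts produces a bilinear boundary term carrying a $\tfrac{1}{n_1 n_2}$ smoothing gain and a cubic time-integral obtained by substituting for $\p_{t'}\wh w$ via the equation. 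Iterating, at the $k$-th stage one integrates by parts on the low-modulation part of a $(k{+}1)$-linear integral using the accumulated phase; the exact zero set of that phase is always cut out by a cancellation among the intermediate frequencies, so the resonant contribution at each stage reduces to an odd sum in those frequencies which vanishes by the reality of $u$ (schematically, a multiple of $\sum_{m\neq0}\tfrac{1}{m}|\wh w(t,m)|^2=0$), and no genuine resonant obstruction survives. Carrying these manipulations out for smooth solutions and collecting the boundary terms together with the directly estimated remainders over all stages yields the identity one iterates,
\[
u = e^{-t\p_x^3}u_0 + \sum_{k\geq2}\cn_k(u,\dots,u),
\]
with $\cn_k$ a $k$-linear operator carrying modulation cutoffs and the gains produced by the successive divisions by resonance functions.

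\textbf{Step 2 (the space $Y(u_0)$ and multilinear estimates).} One first fixes the norm $Y(u_0)$ by modifying the Fourier weight of $X^{s,1/2}$ on the resonant region in a data-dependent way, in the spirit of Christ--Colliander--Tao \cite{cct}, using $\wh u_0$; smallness of $\n{u_0}{H^s}$ keeps this weight comparable to the unmodified one and makes the underlying change of unknown invertible. The technical heart is then to prove, for $u,v$ in a ball of $Y(u_0)$, a bound of the form
\[
\n{\cn_k(u,\dots,u)-\cn_k(v,\dots,v)}{Y(u_0)} \;\lesssim\; C^k\bigl(\n{u}{Y(u_0)}+\n{v}{Y(u_0)}\bigr)^{k-1}\n{u-v}{Y(u_0)}
\]
with $C$ independent of $k$, and likewise for the directly estimated high-modulation bilinear pieces. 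This requires a careful case analysis organized by frequency-interaction type (high--high-to-low, high--low, comparable) and by the relative sizes of the several modulations, using the resonance function $3nn_1n_2$ and its higher analogues, $\ell^2$-based Strichartz and divisor-counting bounds on $\bt$, and the $\tfrac{1}{n_j}$ gains generated by the normal form; the passage from a quadratic to higher-order interactions combined with the derivative $\p_x$ in the nonlinearity makes these estimates substantially different from, and heavier than, their mKdV counterparts in \cite{TT,NTT}. \textbf{I expect this step to be the main obstacle}, and it is where the paper's novel input is concentrated.

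\textbf{Step 3 (fixed point and conclusion).} Granting Step 2, the map $\Phi(u) = e^{-t\p_x^3}u_0 + \sum_{k\ge2}\cn_k(u,\dots,u)$ is a contraction on a small ball of $Y(u_0)$ once $\n{u_0}{H^s}$ and $T = T(\n{u_0}{H^s})$ are small: the series $\sum_k C^k R^{k-1}$ converges only for $R=\n{u}{Y(u_0)}$ small, which is precisely where the small-data hypothesis enters. The resulting fixed point $u\in Y(u_0)$ lies in $C^0_t([0,T],H^s_x(\bt))$ by the built-in embedding, and reversing the differentiation-by-parts identities shows it solves \eqref{eq:kdv original}; uniqueness holds in the class $C^0_tH^s_x\cap Y(u_0)$ by the same difference estimates. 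Finally, continuity of the data-to-solution map follows from these estimates together with the observation that for $u_0$ near $v_0$ the spaces $Y(u_0)$ and $Y(v_0)$ are uniformly comparable, so that the contraction radii and the comparison of the two solutions are stable; uniform continuity is not asserted, in accordance with \cite{cct}.
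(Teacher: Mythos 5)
Your Step 1 contains the decisive error: the claim that after differentiation by parts ``no genuine resonant obstruction survives'' because the resonant contribution reduces to an odd sum like $\sum_{m\neq0}\tfrac1m|\wh w(t,m)|^2=0$. That cancellation only kills part of the resonant set $(n_1+n_2)(n_2+n_3)(n_3+n_1)=0$; what survives is the diagonal cubic term, which in the paper's normalization reads $\mathcal{R}_n(u,u,u)=i\,\f{\lan{n}^{2s}}{n}|u_n|^2u_n$, and it does not vanish for real-valued $u$. This term loses $2s-1$ derivatives relative to $L^2$ and is exactly the obstruction that forces the data-dependent phase: the paper absorbs $\mathcal{R}(r,r,r)$ into the linear flow by replacing $n^3$ with $L_n=n^3+\phi_n$, $\phi_n=\f23\f{\lan n\rangle^{2s}}{n}|f_n|^2$ (this is what makes $Y=Y(u_0)$ data-dependent, not a modification ``on the resonant region'' of the weight), and then must still handle the difference $\mathcal{R}(u,u,u)-\mathcal{R}(r,r,r)$ via the Takaoka--Tsutsumi-type cancellation of Lemma~\ref{le:pn}, which converts it into a double time integral of the nonresonant part (Lemma~\ref{le:vkn}). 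Without this structure your scheme has an unremovable resonant term that no amount of iteration or smallness fixes, and your own introduction of a data-adapted weight is left with no role.

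Two further points where the proposal would not go through as written. First, the infinite differentiation-by-parts series with $k$-uniform constants $C^k$ is not merely ``heavy'': the unrestricted normal form creates high-modulation interactions in the higher-order terms whose symbols are unbounded in the $|n_1|\sim|n_2|\gg|n_3|$ regime (Case 3 of Lemma~\ref{le:m}); this is precisely why the paper restricts the modulation in $T^\ell$ via $\chi$, and even then must pass to a quadrilinear estimate with a symmetry cancellation at $n_1+n_3=0$ (Case 3B(i)), substituting $u=T^\ell(u,u)+v$. A single modulation-restricted normal form, not an iterated expansion, is what closes. Second, your continuity argument asserts that $Y(u_0)$ and $Y(v_0)$ are uniformly comparable so the contraction comparison is ``stable''; but the solution contains the phase-shifted flow $W_tf=\sum f_ne^{i(n^3+\phi_n)t+inx}$ whose dependence on $f$ through $\phi_n$ is exactly the mechanism behind the failure of uniform continuity below $-\f12$, so a Lipschitz-style comparison of the two fixed points is not available; the paper instead proves continuity by compactness (frequency-tail smallness plus equicontinuity, Lemma~\ref{Lemma: AA basic lemmas}, and a diagonal argument).
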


We will briefly outline our approach to prove this theorem.  The main difficulty for working in the $s< -\f{1}{2}$ regime is the failure of  \eqref{Eq: introKDVbad}.  To work around this issue, we utilize the normal form transformations \cite{Oh, shatah} or, equivalently, differentiation-by-parts  \cite{titi, BurakNikos}, which have been used for unconditional well-posedness \cite{katoCanc, Mcc2KdV}, for global well-posedness \cite{I1}, and for non-linear smoothing \cite{ErdTz, Mcc2KdV, OhStef}. Heuristically, a normal form transformation enables one to rewrite \eqref{eq:kdv original} as
\begin{multline*}
(\partial_t-in^3)\widehat{u}(n,t) = \partial_t\left(e^{-itn^3}\sum_{n_1+n_2=n}\frac{1}{n_1n_2}\widehat{u}(n_1,t)\widehat{u}(n_2,t)\right)\\
- 2i\sum_{n_1+n_2+n_3=n}\frac{1}{n_2}\widehat{u}(n_1,t)\widehat{u}(n_2,t)\widehat{u}(n_3,t).
\end{multline*}

When $s\leq -\f{1}{2}$, nonlinear smoothing properties obtained by a standard normal form transformation are not sufficient.  To overcome this difficulty, we will modify the normal form transform operator by restricting size of its modulation weight $\lan{\tau - n^3}$.  Formally, this modulation-restricted normal form achieves the same effect as the standard one, in that it removes an undesirable nonlinear term from the RHS.  In contrast to the standard one, a modulation-restricted normal form can leave intact an \textit{acceptable} (in the sense of boundedness) portion of the nonlinearity instead of removing it completely.  This restriction prevents giving rise to undesirable modulation/frequency interactions in higher order terms, which would be unbounded otherwise.\\

As introduced in \cite{TT, NTT}, we adapt the use of $X^{s,b}$-space with initial-data-dependent Fourier weight.  We will take advantage of $\ell^2$-decoupling theory \cite{schippa} to preserve the $L^6_{t,x}$ embedding for the modified Bourgain space.\\

The outline of the paper is as follows: In Section~\ref{Section: Notation}, we outline common notations that will be persistent throughout the manuscript. In Section~\ref{Section: Function Spaces}, we define auxiliary spaces and establish a few linear estimates.  In Section~\ref{Section: Problem Set-up}, we apply a modulation-restricted normal form transformation and simplify the resulting equation.  Section~\ref{section: estimates} contains all nonlinear estimates to be used for the proof of Theorem \ref{Theorem: 1}. Lastly, we prove Theorem \ref{Theorem: 1} in Section \ref{Section: Proof Thm1}.

\section{Notations}\label{Section: Notation}
 
 For any $u\in C^{\infty}(\bt)$, we denote the Fourier coefficients by 
 \[
 u_n := \cf_x[u] = \int_{\bt} u(x) e^{-inx} \,dx.
 \]
 Similarly, for $u\in \cs(\br)$ we denote the time Fourier transform by 
 \[
 \wh{u}(\tau):= \cf_t[u](\tau) = \int_{\br} u(t) e^{-it \tau}\, dt,
 \]
 as well as the space-time Fourier transform 
 \[
 \wt{u}(\tau,n) := \cf_{t,x}[u](\tau,n) =\int_{\mathbb{R}} \int_{\bt} u(t,x) e^{inx +i \tau t} \,dx\, dt.
 \] 
 We will often be required to invert these, denoted by $\cf^{-1}$, which can be interpreted with respect to the variables indicated in the subscript, and it can represent either the Fourier series or the inverse Fourier transform. Lastly, Denote the inhomogeneous operator $\lan{\cdot} := (1+ |\cdot|^2)^{\f{1}{2}}$, so that we may define the Fourier multiplier $\nab^s u := \cf^{-1}_n [\lan{n}^s u_n]$, for any $s\in \br$\\

For two positive quantities, $A\lesssim B$ means that there exists $C>0$ such that $A \leq C\, B$.  Any dependence of the implicit constant $C$ will be listed below $\lesssim$.  For instance, $A \lesssim_{\n{f}{L^2}} B$ means that the implicit constant depends on $\n{f}{L^2}$. Relation $\gtrsim$ is defined analogously.  If $A\lesssim B$ and $A\gtrsim B$, then we denote $A\sim B$.  Relation $\ll$ (respectively $\gg$) is defined the same way as $\lesssim$ (respectively $\gtrsim$), except here the implicit constant $C$ is assumed to be much larger than 1.\\

For any number $c$, we denote $c+$ (respectively, $c-$) to represent $c+\ve$ (respectively, $c-\ve$) where $\ve>0$ can be chosen to be arbitrarily small.\\

Before proceeding further, let us make a preliminary change in the format of the equation. By substituting $u\mapsto \nab^{-s} u$, we can place \eqref{eq:kdv original} into the $L^2(\mathbb{T})$ space: 
\begin{equation}\label{eq:kdv}
\left|\begin{array}{l} u_t + u_{xxx}= \nab^{-|s|} \p_x[ \nab^{|s|} u \, \nab^{|s|} u]) =: \cn(u,u)\\ \left.u\right|_{t=0} = \nab^{-s} u_0 =: f \in L^2(\bt). \end{array}\right.
\end{equation}
 This will enable us to work in $L^2$-based spaces, which reduces clutter.  Henceforth, we will use positive values for $s$ by denoting $s\mapsto |s|$ for notational conciseness.  In particular, this means that the range of $s$ given in Theorem~\ref{Theorem: 1} is $ \f{1}{2} \leq s < \f{2}{3}$. \\

Given a bilinear Fourier multiplier $T_\sigma (\cdot, \cdot)$ with symbol $\sigma$, we denote $T_\sigma(\chi \cdot, \cdot)$ such that for any smooth functions $u$ and $v$,
\begin{equation}\label{eq:chi}
\cf_{t,x} [T_{\sigma} (\chi u, v)](\tau,n) =\underset{\substack{\tau_1+\tau_2=\tau\\n_1+n_2=n}}{\int\sum} \sigma  \wt{\chi}(n_1,\tau_1;n_2) \wt{u}(\tau_1,n_1)\wt{u}(\tau_2, n_2) \, d\tau_1
\end{equation}
where $\wt{\chi}(n_1,\tau; n_2)$ is a smooth function satisfying
\[
\wt{\chi}(n,\tau;k) = \begin{cases} 1 &\text{ if } \langle \tau - n^3-\phi_n\rangle \lesssim 3(n+k)nk\\ 0 &\text{ if } \langle \tau - n^3-\phi_n\rangle \gg 3(n+k)nk\end{cases}
\]
where
\[
\phi_n:= \frac{2}{3} \frac{\langle n\rangle^{2s}}{n} |f_n|^2,
\]
and $\{f_n\}_{n\in \mathbb{Z}\setminus\{0\}}$ is the Fourier coefficient of the initial data in \eqref{eq:kdv}.  When the $\chi$ operator is applied to the second entry, it takes the form:
\[
\cf_{t,x} [T_{\sigma} ( u, \chi v)](\tau,n) =\underset{\substack{\tau_1+\tau_2=\tau\\n_1+n_2=n}}{\int\sum} \sigma  \wt{u}(\tau_1,n_1)\wt{\chi}(n_2,\tau_2;n_1) \wt{u}(\tau_2, n_2) \, d\tau_1.
\]

Utilizing this, we will often decompose a bilinear operator as $T_\sigma = T_\sigma^\ell + T_{\sigma}^h$ where 
\[T_{\sigma}^{\ell}(u,v) := T_{\sigma}(\chi u, \chi v).\]
We may then rewrite $T^h_\sigma$ as:
\begin{align} 
T_{\sigma}^{h}(u,v) &= T_{\sigma}((1-\chi) u, v)  + T_{\sigma}( u, (1-\chi) v) - T_{\sigma}((1-\chi) u, (1-\chi) v) \nonumber\\
            &= T_{\sigma}((1-\chi) u, \chi v)  + T_{\sigma}( u, (1-\chi) v) \nonumber\\
            &= T_{\sigma}((1-\chi) u, \chi v)  + T_{\sigma}(\chi u, (1-\chi) v) + T_{\sigma} ((1-\chi) u, (1-\chi) v) \nonumber\\
            &=: T_{\sigma}^{h\ell}(u,v) + T_{\sigma}^{\ell h}(u,v)+ T_{\sigma}^{h h}(u,v).\label{eq:ell}
\end{align}
Note that in the ``double-superscript'' notation, the first letter denotes the `high' versus `low' restriction of the first argument, and the second letter denotes the same for the second argument.  For the single superscript case, $T_{\sigma}^\ell = T_{\sigma}^{\ell \ell}$.  But the same is not true for the superscript $h$. Additionally, we remark that the application of $\chi$ in the above is only heuristically as an operator of the form $T_\sigma(\chi u, \chi v)$; $\chi$ depends on the frequencies of both entries, so this is is really of the form $T_{\sigma\tilde{\chi_1}\tilde{\chi_2}}(u,v)$ for some $\chi_1$ and $\chi_2$.

\section{Functional spaces}\label{Section: Function Spaces}
We now define the spaces we'll be working in.  Let $L_n = n^3 + \phi_n$ and $W_t$ be the linear propagator associated to the linear operator $L_n$, i.e. $\widehat{W_t g}(n)$ is the solution to:
\begin{equation}\label{Equation: Equation Dependent Linear Equation}
\begin{cases}
    (\partial_t -i L_n)u_n = 0\\
    u_n(0) = g_n\in \ell^2_n,
\end{cases}
\end{equation}
For a dyadic index $L\geq 1$, we denote the modulation localization of $v$ as $v_L$ where 
\[
v_L := \cf^{-1}_{\tau, n}\left[ \chi_{\lan{\tau - n^3 -\phi_n}\sim L} \wt{v}(\tau,n)\right]
\]
Define localization operators $\chi_A$ where $A$ is a condition that must be met such as $\chi_{L \ll n}$, etc, and define the smooth weights:
\begin{align}
    w_Y(n,L) &:= 
        \begin{cases}
            L^{1/2+} & L\ll |n|\\
            L^{1/3+}\langle n\rangle^{1/3-} & L\gtrsim |n|
        \end{cases}\label{Equation: Y weight}\\
    w_Z(n,L) &:=
        \begin{cases}
            L^{1/2+} & L\ll |n|\\
            L^{2/3+} & L\gtrsim |n|.
        \end{cases}\label{Equation: Z weight}
\end{align}
Then, the initial-data-dependent Bourgain space is defined as:
\[
\|v\|_{X^{s,b}} := \n{\lan{n}^s \langle \tau - n^3- \phi_n\rangle^{b}\wt{v}(\tau, n)}{\ell^2_nL^2_\tau}.
\]
The first space we define is our base space, and has norm is given by:
\begin{multline*}
\|v_L\|_{Y} := \|w_Y(n,L)\tilde{v}_L (\tau,n)\|_{L^2_{n,\tau}} + \|\wt{v}_L(\tau,n)\|_{\ell^2_nL^1_\tau}\\
\sim \left\|\left(L^{1/2+}\chi_{L\ll|n|} + \langle n\rangle^{1/3-}L^{1/3+}\chi_{L\gtrsim |n|}\right)\wt{v}_L(\tau, n)\right\|_{\ell^2_nL^2_\tau}+\|\wt{v}_L(\tau,n)\|_{\ell^2_nL^1_\tau}.
\end{multline*}

Utilizing these, we can define a secondary space, denoted $Z$, which is smoother in time than $Y$:
\begin{align*}
\|v_L\|_Z &:= \|w_Z(n,L)\tilde{v}_L(\tau,n)\|_{L^2_{\tau,n}} \sim \n{ \pr{L^{\f{1}{2}+}\chi_{L \ll n}  + L^{\f{2}{3}+}\chi_{L \gtrsim n}}\wt{v}_L(\tau, n)}{L^2_{\tau,n}}.
\end{align*}
A standard square-sum is then utilized to combine the terms, so that
\[
    \|v\|_{Y}^2 = \sum_{L} \|v_{L}\|_{Y}^2,\qquad\|v\|_{Z}^2 = \sum_{L} = \|v_{L}\|_{Z}^2.
\]

Note that we have the chain of inclusions given by $Z \hookrightarrow  Y \hookrightarrow X^{0,1/3+}$.  
Additionally, the low modulation adaptation and the high modulation smoothing are chosen so that $u\in Y$ implies $\nab^{0-}u\in L^6_{x,t}$.  This fact will be shown later.

\begin{remark}
    Similar spaces have been defined before, for instance \cite{MR3013402} in connection to the Kawahara equation.  Also, in  \cite{shin2024nonlinearsmoothingperiodicdispersion},  Shin proves well-posedness utilizing Differentiation-by-parts in an $X^{s,b}$ framework for the dispersion generalized Benjamin-Ono equation, similar to the above. However, they utilize an idea of Tao \& Bejenaru \cite{MR2204680} and Kato \cite{MR3013402} in modifying the derivative weight when the modulation is large. This enables them to estimate purely with $b = \frac12$, at the cost of some high modulation derivatives. This idea fails in our framework, as the key problem lies with a $H\times H\to L$ frequency interaction as opposed to a $H\times L\to L$ interaction. 
\end{remark}

We can obtain a standard $X^{s,b}$-type smoothing estimate:
\begin{lemma}
\label{le:19}
Let $\eta \in \cs_t(\br)$.  For any $b>1/2$ and $s\in \br$,
$$
\n{\eta(t)\int_0^t W_{t-s} F_n(s) ds}{Z} \lesssim_{\eta,b} \n{\cf^{-1}_{n,\tau} [\lan{\tau - n^3 - \phi_n}^{-1} \wt{F}_n(\tau)]}{Z}.
$$
\end{lemma}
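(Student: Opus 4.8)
The estimate is the inhomogeneous (Duhamel) bound in Bourgain-type spaces, so the plan is to run the classical argument (cf.\ \cite{KPV}) adapted to the $u_0$-dependent dispersion relation $L_n = n^3 + \phi_n$ and to the modulation-dependent, discontinuous weight $w_Z$ of \eqref{Equation: Z weight}. First I would record the standard Fourier-side formula for $u := \eta(t)\int_0^t W_{t-s}F(s)\,ds$: for each $n$, using $\int_0^t e^{is\rho}\,ds = (e^{it\rho}-1)/(i\rho)$ and taking the time transform, $\wt u(\tau,n)$ is a $\mu$-convolution in the modulation variable $\xi := \tau - L_n$,
\[
\wt u(\tau,n) \;=\; \frac{1}{2\pi}\int_{\br}\frac{\wh\eta(\xi - \mu) - \wh\eta(\xi)}{i\mu}\,\wt F(L_n + \mu,\,n)\,d\mu
\]
(precise signs immaterial), where on $|\mu| \lesssim 1$ the apparent $1/\mu$ singularity is removed by the Taylor expansion $\eta(t)\,\tfrac{e^{it\mu}-1}{i\mu} = \sum_{k\ge1}\tfrac{(i\mu)^{k-1}}{k!}\,t^k\eta(t)$; this turns the $|\mu|\lesssim1$ part of $u$ into a summable series of $\langle\xi\rangle$-Schwartz functions, hence essentially supported at modulation $\lesssim 1$.

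Next, decompose $F = \sum_L F_L$ and $u = \sum_M u_M$ into dyadic modulation pieces and estimate the contribution of $F_L$ to $\n{u_M}{Z}$ through three kernel pieces. The \emph{diagonal main term} comes from $\wh\eta(\xi-\mu)$ with $|\mu|\sim L\gg1$ and $M\sim L$: since the multiplier $\mu^{-1}$ is $O(L^{-1})$ on $|\mu|\sim L$ and convolution against $\wh\eta$ is bounded on $L^2_\tau$ by $\n{\wh\eta}{L^1}$, this contributes $\lesssim_\eta \n{\lan{\tau-L_n}^{-1}\wt F_L}{Z}$, which is exactly the right-hand side of the lemma (it is essential here to keep the \emph{same} dyadic $L$ inside $w_Z$ on both sides). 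The term $\wh\eta(\xi)$ factors out of the $\mu$-integral as $\wh\eta(\xi)\cdot C_n^L$ with $C_n^L := -\tfrac{1}{2\pi i}\int \wt F(L_n+\mu,n)\,\mu^{-1}\chi_{|\mu|\sim L}\,d\mu$, so this piece of $u$ equals $\eta(t)\,W_t c^L$ for the datum $c^L = \{C_n^L\}$; it is therefore concentrated at modulation $M\sim1$ with $\lan{M}^{-N}$ tails, and Cauchy--Schwarz gives $|C_n^L|\lesssim L^{-1/2}\n{\wt F_L(\cdot,n)}{L^2}$. Finally, all \emph{off-diagonal} contributions ($M\not\sim L$) are bounded directly by the Schwartz decay of $\wh\eta$, which supplies a factor $\lan{\max(M,L)/\min(M,L)}^{-N}$ for any $N$.

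It remains to sum over dyadic $M$ by Schur's test applied to the resulting coefficient matrix $c_{M,L}$. The one place the design of $w_Z$ is really used is the $\wh\eta(\xi)$ piece with $M\sim1$, $L\gg1$, whose coefficient satisfies $c_{1,L}\sim L^{1/2}\sup_n w_Z(n,1)/w_Z(n,L)\lesssim L^{1/2}\cdot L^{-1/2-}=L^{0-}$, summable in $L$ \emph{precisely because} $w_Z(n,L)\gtrsim L^{1/2+}$ for every $n$ and $L$ (it is $L^{1/2+}$ when $L\ll|n|$ and $L^{2/3+}\ge L^{1/2+}$ when $L\gtrsim|n|$) --- the ``$+$'' in the exponent is what yields the summable $L^{0-}$. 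The remaining off-diagonal coefficients are dominated by the $\wh\eta$-decay together with the monotonicity $w_Z(n,M)\lesssim w_Z(n,L)$ for $M\le L$, so that $\sup_M\sum_L c_{M,L} + \sup_L\sum_M c_{M,L}\lesssim 1$ and Schur's test closes the bound. The same computation, carrying along the weight $\lan{n}^s$ and reading $Z$ as $X^{s,b}$, yields the analogous $X^{s,b}$-estimate whenever $b>1/2$, which is where the hypothesis on $b$ enters.

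The main difficulty I anticipate is organizational rather than conceptual: one has to check that $w_Z(n,L)$ --- which \emph{grows} across the threshold $L\sim|n|$ rather than being capped there --- is (essentially) non-decreasing in $L$ and bounded below by $L^{1/2+}$, since these two features are exactly what makes the off-diagonal-in-modulation sums generated by the cutoff $\eta$ and by the truncated Duhamel integral $\int_0^t$ (rather than $\int_{\br}$) converge.
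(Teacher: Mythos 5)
The paper does not actually prove Lemma~\ref{le:19} --- it is asserted as a standard $X^{s,b}$-type smoothing estimate --- and your write-up is exactly the standard Ginibre--Tsutsumi--Velo/Tao Duhamel argument (decomposition of $\eta(t)\int_0^t$ into the near-diagonal convolution piece, the free-evolution boundary term $\wh\eta(\xi)C_n^L$, and the Taylor-expanded $|\mu|\lesssim 1$ piece) transplanted to the weight $w_Z$ and the modified phase $L_n=n^3+\phi_n$. The argument is correct, and you correctly isolate the only two properties of $w_Z$ that the transplantation needs: the uniform lower bound $w_Z(n,L)\gtrsim L^{1/2+}$ (so the dual weight $w_Z(n,L)/L\gtrsim L^{-1/2+}$ stays strictly above the $-\frac12$ endpoint and the boundary term sums as $L^{0-}$) and the essential monotonicity/polynomial growth of $w_Z(n,\cdot)$, which lets the Schwartz tails of $\wh\eta$ absorb all off-diagonal modulation transfers.
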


We will denote the norm on the RHS of Lemma~\ref{le:19} by $Z^*$:
\begin{multline*}
\n{v_L}{Z^*} := \n{\cf^{-1}_{n,\tau} [\lan{\tau - n^3 - \phi_n}^{-1} \wt{v}_L(\tau,n)]}{Z}\\
\sim \left\|\left(L^{-1/2+}\chi_{L\ll |n|} + L^{-1/3+}\chi_{L\gtrsim |n|}\right)\tilde{v}_L(\tau, n)\right\|_{L^2_{n,\tau}}.
\end{multline*}

We now record several useful embeddings that will enable us to estimate multilinear operators of functions in $Y$ and $Z$.

\begin{lemma}\label{le:l4}
$Z \hookrightarrow Y \hookrightarrow X^{0,\f{1}{3}+} \hookrightarrow L^4_{t,x}$
\end{lemma}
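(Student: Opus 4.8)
The plan is to prove the three embeddings in sequence, with the last one ($X^{0,1/3+}\hookrightarrow L^4_{t,x}$) being the genuine harmonic-analytic content and the first two being essentially bookkeeping about the weights. First I would dispatch $Z\hookrightarrow Y$: comparing $w_Z(n,L)$ with $w_Y(n,L)$ in the two regimes, when $L\ll|n|$ the weights coincide ($L^{1/2+}$), and when $L\gtrsim|n|$ we have $w_Y(n,L)=L^{1/3+}\langle n\rangle^{1/3-}\lesssim L^{1/3+}L^{1/3-}=L^{2/3}\lesssim w_Z(n,L)$, so $\|v_L\|_Y^{\mathrm{(main)}}\lesssim\|v_L\|_Z$ term by term in $L$; the only extra ingredient is controlling the $\ell^2_n L^1_\tau$ piece of the $Y$-norm by the $L^2_{n,\tau}$ piece of $Z$, which follows from Cauchy--Schwarz in $\tau$ over the modulation region $\langle\tau-n^3-\phi_n\rangle\sim L$ (width $\sim L$), gaining $L^{1/2}$, against the $L^{2/3+}$ (or $L^{1/2+}$) weight in $Z$ — there is room to spare. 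Squaring and summing in $L$ then gives $Z\hookrightarrow Y$.

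Next, for $Y\hookrightarrow X^{0,1/3+}$, observe that in the region $L\ll|n|$ the $Y$-weight is $L^{1/2+}\geq L^{1/3+}\sim\langle\tau-n^3-\phi_n\rangle^{1/3+}$, and in the region $L\gtrsim|n|$ the $Y$-weight is $\langle n\rangle^{1/3-}L^{1/3+}\gtrsim L^{1/3+}$ as well (since $\langle n\rangle\gtrsim1$); hence $w_Y(n,L)\gtrsim L^{1/3+}$ pointwise, so $\|v_L\|_{X^{0,1/3+}}\lesssim\|v_L\|_Y$ for each $L$, and the square-sum in $L$ yields the embedding. The remaining and main step is the $L^6$-type Strichartz/$L^4$ bound $X^{0,1/3+}\hookrightarrow L^4_{t,x}$ for the \emph{modified} Bourgain space built on the phase $L_n=n^3+\phi_n$. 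I would reduce this, via the standard transference principle (writing $v=\int \widehat{v}(\tau,\cdot)\,W_t\,(\text{modulated data})\,d\tau$ and using Minkowski plus Hölder in the $b$-weight exactly as in the classical $X^{s,b}$ theory), to a linear estimate of the form $\|W_t g\|_{L^4_{x,t}(\mathbb{T}\times I)}\lesssim\|g\|_{\ell^2_n}$ on a bounded time interval, together with a Hausdorff--Young/Plancherel argument; this is precisely where the $L^6_{x,t}$ decoupling result of Schippa \cite{schippa} advertised in the introduction enters — one uses $\ell^2$-decoupling for the perturbed Airy curve $n\mapsto L_n$ (the perturbation $\phi_n$ is bounded and depends only on $n$, hence harmless for the decoupling geometry) to get the $L^6$ Strichartz estimate, and $L^4\subset L^6$ interpolation (or direct Cauchy--Schwarz off the $L^6$ bound) closes it.

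The main obstacle I anticipate is verifying that the data-dependent shift $\phi_n=\tfrac23\langle n\rangle^{2s}|f_n|^2/n$ does not destroy the curvature estimates underlying decoupling: since $\phi_n$ depends on $n$ alone (not on $\tau$), the level sets $\{\tau=L_n\}$ are still a graph over the $n$-axis, and the relevant ``cubic separation'' $|L_{n_1}+L_{n_2}+L_{n_3}-L_{n_1+n_2+n_3}|$ differs from the unperturbed $3(n_1+n_2)(n_2+n_3)(n_3+n_1)$ only by $O(\sum\langle n_i\rangle^{2s}/|n_i|)$, which is bounded (indeed $o(1)$) for $s<1$, so the geometry used by Schippa's theorem is stable under this perturbation; I would state this comparison as the one lemma that needs checking and otherwise quote \cite{schippa} as a black box. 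If one prefers to avoid decoupling entirely, an alternative is the elementary divisor-counting proof of the periodic $L^4$ Strichartz estimate for Airy (Bourgain's original argument), which again only uses that $n\mapsto L_n$ has the property that $L_{n_1}+L_{n_2}=L_{n_3}+L_{n_4}$ with $n_1+n_2=n_3+n_4$ forces $\{n_1,n_2\}=\{n_3,n_4\}$ up to a controlled error — but the decoupling route is cleaner and is what the paper has set up, so that is the one I would write.
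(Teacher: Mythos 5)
Your first two embeddings are fine: the weight comparison $w_Y\lesssim w_Z$ in both modulation regimes, plus Cauchy--Schwarz in $\tau$ on the block $\lan{\tau-n^3-\phi_n}\sim L$ to control the $\ell^2_nL^1_\tau$ piece of $Y$ by $L^{1/2}\|\wt v_L\|_{\ell^2_nL^2_\tau}\lesssim w_Z(n,L)\|\wt v_L\|_{\ell^2_nL^2_\tau}$, and the pointwise bound $w_Y(n,L)\gtrsim L^{1/3+}$, are exactly the intended (and essentially only possible) arguments.

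The gap is in your chosen proof of $X^{0,\f{1}{3}+}\hookrightarrow L^4_{t,x}$, which is the only nontrivial step. The ``standard transference principle'' you invoke (foliate in $\tau$, Minkowski, then H\"older in the modulation weight) requires $b>\f12$, because one needs $\int\lan{\lambda}^{-2b}d\lambda<\infty$; it cannot produce any estimate at $b=\f13+$ from a linear-propagator bound. Likewise, routing through Schippa's decoupling gives the $L^6$ estimate only at the level $X^{0+,\f12+}$ (this is precisely Lemma~\ref{le:l6}), and H\"older $L^4\subset L^6$ on the compact space-time domain then yields only $X^{0+,\f12+}\hookrightarrow L^4$. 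That is strictly weaker than the claimed $X^{0,\f13+}\hookrightarrow L^4$ and is not enough for the way the embedding is used later: e.g.\ in Lemma~\ref{le:tell1} the $L^4$ bound must be compatible with the high-modulation $Y$-weight $L^{1/3+}\lan{n}^{1/3-}$, so an $L^{1/2+}$ requirement cannot be afforded. The correct argument is the one you relegate to an aside: Bourgain's direct $L^4$ estimate $\n{u}{L^4_{t,x}}\lesssim\n{u}{X^{0,1/3}}$ for the periodic Airy phase, proved by writing $\n{u}{L^4}^2=\n{u^2}{L^2}$, dyadically decomposing in modulation, and counting the number of $n_1$ with $3nn_1(n-n_1)$ in an interval of length $\sim L_{\max}$; this transfers to the phase $L_n=n^3+\phi_n$ because the shift enters the count only through $(\tau_1-L_{n_1})+(\tau_2-L_{n_2})=\tau-n^3-3nn_1n_2-\phi_{n_1}-\phi_{n_2}$ and $|\phi_{n_j}|\lesssim_f\lan{n_j}^{2s-1}$ merely enlarges the interval by a negligible amount. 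One small correction on that point: $\phi_n$ is not bounded in general --- for $\f12\le s<\f23$ it can grow like $\lan{n}^{1/3-}$ --- but it is $o(|n|)$, which is all the counting (or the decoupling geometry in Lemma~\ref{le:l6}) needs. Since the paper states Lemma~\ref{le:l4} without proof as a standard fact, writing out the counting argument (or citing Bourgain/Tao for it and checking stability under $\phi_n$) is what your proof should do; as written, your primary route does not prove the stated exponent.
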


Additionally, the $L^6_{t,x}$ embedding can be recovered through the $\ell^2$-decoupling framework.
\begin{lemma}\label{le:l6}
$X^{0+,\f{1}{2}+} \hookrightarrow L^6_{t,x}$
\end{lemma}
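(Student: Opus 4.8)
The plan is to deduce the embedding from a fixed-time $L^6$ Strichartz estimate for the modified propagator $W_t$, and to obtain that estimate from $\ell^2$-decoupling for a perturbed Airy curve. By the standard transference principle, an inequality $\n{v}{L^6_{t,x}} \lesssim \n{v}{X^{0+,b}}$ with $b > \f12$ follows once one knows the (local in time) linear estimate $\n{W_t f}{L^6_{t,x}([0,1]\times\bt)} \lesssim_\epsilon \n{f}{H^\epsilon_x}$ for every $\epsilon > 0$: the modulation weight $\lan{\tau - n^3 - \phi_n}^b$ with $b > \f12$ is exactly what makes the $\tau$-integral in the Fourier inversion of $v$ absolutely summable, and a pure time modulation $e^{it\tau_0}$ is an isometry of $L^6_{t,x}$, so the abstract hypotheses are met. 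Littlewood--Paley localizing $f$ to frequencies $|n| \sim N$, the $N^\epsilon$ loss is absorbed by the $\lan{n}^{0+}$ built into $X^{0+,\f12+}$ (summing $N^\epsilon \n{P_N f}{L^2}$ against $N^{-\epsilon}$ by Cauchy--Schwarz), so it suffices to prove $\n{W_t f}{L^6_{t,x}([0,1]\times\bt)} \lesssim_\epsilon N^\epsilon \n{f}{L^2}$ whenever $\supp f_n \subseteq \{|n| \sim N\}$.

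For such $f$ one has $W_t f(x,t) = \sum_{|n|\sim N} f_n\, e^{i(nx + (n^3 + \phi_n)t)}$. The dyadic rescaling $(x,t) \mapsto (Nx, N^3 t)$ identifies this, up to harmless normalizations, with an exponential sum along the arc $\eta \mapsto (\eta, \eta^3 + N^{-3}\phi_{N\eta})$, $\eta \in [1,2]$ (plus the symmetric arc for $n<0$), sampled at the lattice $\f1N\bz$. The unperturbed curve $(\eta, \eta^3)$ has curvature comparable to $1$ on $[1,2]$, so the $\ell^2$-decoupling inequality for non-degenerate planar curves applies at the endpoint exponent $p = 6$; iterating it down to caps of length $N^{-1}$, i.e. single integer frequencies, and then summing the unit-frequency pieces in $\ell^2$ yields $\n{W_t f}{L^6_{t,x}} \lesssim_\epsilon N^\epsilon \big(\sum_{|n|\sim N}|f_n|^2\big)^{1/2}$, as needed. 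The perturbation is inert here: since $s < \f23 < 1$ and $|\phi_n| \lesssim \lan{n}^{2s-1}\n{f}{L^2}^2$ on $|n| \sim N$, the rescaled vertical shift $\phi_n/N^3$ has size $\lesssim N^{2s-4} \leq N^{-2}$, which sits below the thickness $\sim (N^{-1})^2$ of the thinnest slab appearing in the decoupling; thus each frequency point $(n, n^3 + \phi_n)$ lies in the $N^{-2}$-neighborhood of the exact curve and the decoupling hypotheses hold verbatim. This is precisely the content of the $\ell^2$-decoupling estimates we quote from \cite{schippa}.

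The main point — and the only place a genuinely deep input enters — is the sharp periodic $L^6$ Strichartz bound, equivalently the endpoint $\ell^2$-decoupling of Bourgain--Demeter type; the $L^4_{t,x}$ embedding of Lemma~\ref{le:l4} is by comparison elementary. What remains is bookkeeping: reassembling the Littlewood--Paley pieces as above, and summing over dyadic modulation scales $L$ in $\n{v}{L^6} \le \sum_L \n{v_L}{L^6}$, where the fractional surplus $L^{0+}$ in the $X^{0+,\f12+}$ norm (beyond the bare $L^{1/2}$ produced by the single-shell estimate) supplies $\ell^1_L$-summability via Cauchy--Schwarz. The one technical caveat to verify is that transference delivers the stated $L^6_{t,x}$ bound from the local-in-time Strichartz estimate — which, since the application of this lemma only requires control on a fixed finite time interval, is immediate.
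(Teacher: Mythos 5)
Your proposal is correct and follows essentially the same route as the paper: both reduce the embedding, via the standard $b>\frac12$ transference (invariance under time modulations) and Littlewood--Paley localization with the $\langle n\rangle^{0+}$ weight absorbing the $N^{\varepsilon}$ losses, to a frequency-localized $L^6_{t,x}$ Strichartz bound for the modified propagator $W_t$, which is obtained from $\ell^2$-decoupling for the cubic curve with the phase shift $\phi_n = O(\langle n\rangle^{2s-1})$ treated as a perturbation too small to affect the decoupling geometry. The only cosmetic difference is that the paper fits the perturbed phase into Schippa's framework by building a $C^2$ interpolant $G(\xi)$ with $G(n)=\phi_n$ and quoting his Proposition 1 as a black box, whereas you re-derive that step directly by rescaling and checking that the shifted frequency points lie within the $N^{-2}$ slab thickness of the exact curve.
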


\begin{proof}
  This  utilizes the work of Schippa \cite{schippa}. Recall that $\phi_n$ is given as
$\phi_n = \frac{\langle n\rangle ^{2s}}{n}|\widehat{f}_n|^2$.  Let $\eta\in\mathcal{S}$ have support contained in $[-1/4,1/4]$ and satisfy $\int \eta\,dx = 1$, and let $g(x)$ be defined by
\[
g(x) =
\begin{cases}
    |\widehat{f}_n|^2 & \mbox{if }x\in[n-\frac{1}{4}, n+\frac{1}{4}] \text{ for any } n \in \mathbb{Z}\\
    0 & \mbox{else}.
\end{cases}
\]
Then, for $\xi\ne 0$ we define $G(\xi) = \frac{\langle \xi\rangle^{2s}}{\xi}\eta*g(\xi)$, so that for any $n\in\mathbb{Z}\setminus \{0\}$ we have $G(n) = \phi_n.$ We remark that there is no issue at $0$ because $f(t)\equiv 0$ in a neighborhood of $0$, but we do this because we will work away from $0$ anyway.\\

With this setup, we find that $\vp(\xi) = \xi^3+G(\xi)$ is a $C^2$ phase function which behaves exactly the same way as $\xi^3$ for $|\xi| \gg_{g,\eta} 1$. In the language of \cite{schippa}, for all $\xi\gg 1$ we have that $\vp$ satisfies $\mathcal{E}^{0}(\psi)$ where $\psi(N) = 6N$ for $N\gg 1$. It then follows by \cite[Proposition 1]{schippa} that
\[
\left\|\sum_{|n|\sim N}e^{it(n^3+\phi_n)+inx} g_n\right\|_{L^6(\mathbb{T}\times\mathbb{T})} \lesssim N^{\varepsilon}\|P_N g_n\|_{\ell^2_n}.
\]
Applying the square-function estimate after taking $2\ve$ derivative, we obtain
\[
\n{J^{0-}_x \cf^{-1}_n e^{it(n^3 + \phi_n)} g_n}{L^6_{t,x}} \lesssim \n{g}{L^2}
\]
To show the embedding, note that the implicit constant of this proof does not change when we define $\vp_{\tau_0} (\xi) = \xi^3 + G(\xi) + \tau_0$ for any $\tau_0$.  Applying \cite[Lemma 2.9]{tao:text}, we have the spatial embedding.
\end{proof}

\section{Problem set-up}\label{Section: Problem Set-up}
We now proceed to reduce the equation \eqref{eq:kdv} to the system of equations we work with, given by $u = T^\ell(u,u) + v$ with $v$ satisfying \eqref{eq:vn}. Towards that goal, we note that
\[
 \cf_x[\mathcal{N}(u,u)] = \sum_{n=n_1+n_2}\frac{ n \langle n_1\rangle^s\langle n_2\rangle^s}{ \lan{n}^s}\widehat{u}(n_1)\widehat{u}(n_2),
\]
and introduce the $\chi$ operators through the decomposition
\begin{equation}\label{definition: Nh}
\mathcal{N}(u,u) = \mathcal{N}^\ell(u,u) + \mathcal{N}^h(u,u)  
\end{equation}
where $\mathcal{N}^\ell(u,u) = \mathcal{N}(\chi u, \chi u)$ as defined in \eqref{eq:chi}. Next, define the standard KdV normal form operator (see \cite{titi,erdougan2013global,Oh}) given by:
\begin{equation}
\cf_{x}[T(f,g)](n)
:= \sum_{n=n_1+n_2}\frac{\langle n_1\rangle^s\langle n_2\rangle^s}{{ 3 \lan{n}^s }n_1n_2}f_{n_1}g_{n_2},\label{Definition: Tl definition}
\end{equation}
for any $f,g\in C^\infty_{x}(\bt)$. The modulation-restricted normal form operator is defined by
\[
T^\ell(u,v) = T(\chi u, \chi v)
\]
where $u$ and $v$ are functions of $(t,x)$.  In this case, $T^\ell$ formally satisfies
\[
(\p_t + \p_x^3) T^\ell(u,u) = 2 T^\ell (\mathcal{N}(u,u), u) + \mathcal{N}^\ell(u,u).
\]
We now let $u = T^\ell(u,u) + v$, and utilize the above display to obtain the equation for $v$ as
\begin{equation}\label{eq:transform1}
\left|\begin{array}{l} (\p_t + \p_x^3) v = 2T^{\ell} (\mathcal{N}(u,u),u) + \mathcal{N}^h (u,u)\\
v(0) = f - T^{\ell}(f,f).\end{array}\right.
\end{equation}
\begin{remark}
    The object $T^\ell (f,f)$ is not well-defined as $T^\ell$ requires the full space-time Fourier transform of $u$, and is not writable strictly in terms of $f$. In Theorem~\ref{le:tell1}, we will see that, for any $u(t)\in C^0_tL^2_x$, $T^\ell (u,u) \in C^{0}_t H^s_x$ for some $s>0$ so that we can define  $T^\ell(f,f) :=  \left.T^{\ell} (u,u)\right|_{t=0}$. Other locality-related issues will be addressed in Section \ref{Section: Proof Thm1}.
\end{remark} 

Consider the trilinear term on the RHS, $T^\ell (\mathcal{N}(u,u),u)$, which, on the Fourier side, can be written as:
\begin{multline}
3 \cf_{t,x}\left[T^\ell (\mathcal{N}(u,u),u)\right] (\tau,n) \\
=\underset{\tiny \begin{array}{c}n_1+n_2\neq 0,
\quad n_j \neq 0\\ n_1+n_2+n_3 = n\\ \tau_1+\tau_2+\tau_3=\tau\end{array}}{\int\sum} \frac{\langle
n_1\rangle^s \langle n_2\rangle^s \langle n_3\rangle^{s}}{i n_3\langle
n\rangle^s} \chi(n_1+n_2,\tau_1+\tau_2;n_3) \wt{u}(\tau_1, n_1) \\
\times\wt{u}(\tau_2, n_2) \chi(n_3,\tau_3;n_1+n_2)\wt{u}(\tau_3,n_3).
\end{multline}

Trilinear resonance occurs when $(n_1+n_2)(n_2+n_3)(n_3+n_1) =  0$, so we let $\mathcal{R}^\ell(u,u,u)$ denote the restriction of $3T^\ell (\mathcal{N}(u,u),u)$ to this condition and define $\mathcal{M}$ so that 
\begin{equation}\label{Definition: M definition}
3T^\ell (\mathcal{N}(u,u),u) =\mathcal{R}^\ell(u,u,u)+ \mathcal{M} (u,u,u).
\end{equation}
Let $\mathcal{R}(u,u,u)$ denote the trilinear resonant term rising from $T(\cn(u,u),u)$ (that is, without $\chi$ operators).  We denote
\begin{equation}\label{definition: Mod Rest. Resonant}
\mathcal{R}^h(u,u,u) := \mathcal{R}(u,u,u) - \mathcal{R}^\ell(u,u,u).
\end{equation}
Under this convention, we find that that our equation for $v$ reads:
\[
\left|\begin{array}{l} (\p_t + \p_x^3) v = \mathcal{N}^h (u,u) + \f{2}{3} \left[\mathcal{M} (u,u,u) + \mathcal{R}(u,u,u) - \mathcal{R}^h(u,u,u)\right]\\
v(0) = f - T^\ell (f,f). \end{array}\right.
\]
From \cite{Oh}, we know that the Fourier coefficients of $\mathcal{R}(u,u,u)$ reduce to
\[
\mathcal{R}_n(u,u,u) = i \f{\lan{n}^{2s}}{n} |u_n|^2 u_n,
\]
and that the IVP: $(\p_t + \p_x^3) r = \f{2}{3}\mathcal{R}(u,u,u)$ and $r(0) = f$ yields  the explicit solution
\[
r(t,x) = \sum_{n\neq 0} f_n \exp \left( \f{2}{3} i \f{\lan{n}^{2s}}{n} | f_n|^2 \,t\right) e^{i(nx + n^3 t)} = \sum_{n\neq 0} f_n e^{i(\phi_n+ n^3)t} =: W_t f.
\]

\subsection{A Cancellation Structure} We now seek to leverage a cancellation structure utilized first, to our knowlege, by Takoaka \& Tsutsumi \cite{TT}. Fundamentally, this is the observation that we may replace the trilinear resonant interaction with an integral of the \textit{nonresonant} interaction. This means that our Duhamel formulation will have two time integrals, which we handle in Lemma \ref{le:vkn}.\\

To that end, we temporarily introduce the variable $w$ by removing the linear evolution from $v$, via $v = W_tf + w$, which translates to 
\[
u =  W_tf + T^{\ell}(u,u) + w.
\]
In this section, we will denote $r :=  W_tf$ and $h := T^\ell(u,u)$, so that $u = r + h+ w$.  The equation for $w$ can be written as
\begin{equation}\label{eq:transform2}
    \left|\begin{array}{l} (\p_t + \p_x^3) w =  \frac{2}{3} \mathcal{R}^*(u) + \mathcal{N}^h (u,u) +  \f{2}{3} \mathcal{M}(u,u,u) + \f{2}{3} \mathcal{R}^h(u,u,u)\\
w(0) =-T^\ell (f,f), \end{array}\right.
\end{equation}
where  $\mathcal{R}^*(u) = \mathcal{R}(u,u,u) - \mathcal{R}(r,r,r)$. \\

Since the resonant solution operator $W_t(\cdot)$ is unitary, we know that  $r$ cannot be smoother than the initial data, $f\in L^2$.  On the other hand, as will be observed in Lemma~\ref{le:tell1}, $h$ receives smoothing benefit from the normal form operator $T^\ell$, while the remainder term $w$ is comparable to $r$ (and have additional temporal regularity). We now perform a substitution in order to obtain estimates to close our proof, we substitute the relationship for $u$ into $\mathcal{R}^*$
\begin{equation}\label{Equation: Resonant Expansion}
\mathcal{R}^*(u,u, u) = \mathcal{R}(r+h+w, r+h+w, r+h+w) - \mathcal{R}(r,r,r),
\end{equation}
which will then lack the roughest term, $\mathcal{R}(r,r,r)$. We observe that any $\rr(\cdot, \cdot, \cdot)$ term that contains at least one entry of $h$ is sufficiently smooth as will be observed in Lemma~\ref{le:r}, so we restrict our attention to the terms of \eqref{Equation: Resonant Expansion} that only involve $r$ or $w$. The terms in the above display that are linear in $w$ and quadratic in $r$ satisfy
\[
\mathcal{R}_n(r, r,w) +\mathcal{R}_n (w, r,r)+ \mathcal{R}_n(r,w, r) = i\phi_n w_n + 2i\f{\lan{n}^{2s}}{n} r_n \cdot \textnormal{Re}[ \overline{r_n} w_n ],
\] 
while those that are quadratic in $w$ and linear in $r$ satisfy
\[
\mathcal{R}_n(r, w,w) +\mathcal{R}_n (w, r,w)+ \mathcal{R}_n(w,w, r) = i  \f{\lan{n}^{2s}}{n}r_n |w_n|^2 + 2i\f{\lan{n}^{2s}}{n} w_n \cdot \textnormal{Re}[ \overline{r_n} w_n ].
\] 
When combined with the cubic term in $w$, we obtain the expression
\begin{multline*}
\mathcal{R}_n(r, r,w) +\mathcal{R}_n (w, r,r)+ \mathcal{R}_n(r,w, r)  + \mathcal{R}_n(r, w,w)\\
+\mathcal{R}_n (w, r,w)+ \mathcal{R}_n(w,w, r) + \mathcal{R}_n(w, w,w)\\
= i \phi_n w_n + 2i\f{\lan{n}^{2s}}{n} (r_n+w_n) \left( \textnormal{Re}[ \overline{r_n} w_n ]  + \f{1}{2}  |w_n|^2\right) .
\end{multline*}

Denoting $L_n = n^3+ \phi_n$, the equation for $w_n$ can be written as
\begin{equation}\label{eq:w2}
\pr{\f{d}{dt} - iL_n } w_n =  2i\f{\lan{n}^{2s}}{n} (r_n+w_n) \left( \textnormal{Re}[ \overline{r_n} w_n ]  + \f{1}{2}  |w_n|^2\right)  +  \mathcal{NR}_n 
\end{equation}
where $\mathcal{NR}$ captures all remaining terms, which includes:
\begin{equation}\label{eq:nr}
\rr(u,u,u) - \rr(r+w,r+w,r+w); \quad \rr^h(u,u,u); \quad \cn^h(u,u); \quad \mathcal{M}(u,u,u).
\end{equation}
We observe that the first expression in \eqref{eq:nr} must include a factor of $h = T^\ell(u,u)$ which is smooth as noted before.  The first term on the RHS of \eqref{eq:w2} is the most difficult to handle, since we start off with a loss of $2s-1$ derivatives, which must be recuperated somehow.  Hence, we establish the aforementioned cancellation property to overcome this derivative loss in the following Lemma.

\begin{lemma}\label{le:pn}
    If $\{w_n\}_{n\in \mathbb{Z}\setminus \{0\}}$ is a smooth function satisfying \eqref{eq:w2}, we must have
    \[
 \textnormal{Re}[ \overline{r_n} w_n ]  + \f{1}{2}  |w_n|^2 =  \int_0^t  \textnormal{Re}[\overline{r_n+w_n}\mathcal{NR}_n](s) \, ds + \textnormal{Re}[\overline{f_n} w_n(0)] + \f{1}{2} \left|w_n(0) \right|^2
    \]
\end{lemma}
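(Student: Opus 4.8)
The identity is an exact ODE computation at each fixed frequency $n\in\bz\setminus\{0\}$, exploiting the fact that the troublesome resonant term on the right-hand side of \eqref{eq:w2} is, in a precise sense, purely imaginary and therefore invisible to the quadratic quantity
\[
P_n(t):=\textnormal{Re}[\overline{r_n}w_n]+\tfrac12|w_n|^2.
\]
The first step is to recognize $P_n$ as a difference of squares: since $|r_n+w_n|^2=|r_n|^2+2\,\textnormal{Re}[\overline{r_n}w_n]+|w_n|^2$, we have $P_n=\tfrac12\big(|r_n+w_n|^2-|r_n|^2\big)$. I would therefore introduce $z_n:=r_n+w_n$ and rewrite everything in terms of $z_n$.

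Next, I would record the two ODEs in play. From the explicit formula for $r$ obtained just before the lemma, $r_n(t)=f_n e^{iL_nt}$ with $L_n=n^3+\phi_n$, so $r_n$ solves $\tfrac{d}{dt}r_n=iL_nr_n$; in particular $|r_n(t)|\equiv|f_n|$ is constant in $t$, whence $\tfrac{d}{dt}\big(\tfrac12|r_n|^2\big)=0$. Adding $\tfrac{d}{dt}r_n=iL_nr_n$ to \eqref{eq:w2} gives
\[
\Big(\tfrac{d}{dt}-iL_n\Big)z_n = 2i\,\tfrac{\langle n\rangle^{2s}}{n}\,z_n\,P_n + \mathcal{NR}_n .
\]
Then I would compute
\[
\tfrac{d}{dt}\big(\tfrac12|z_n|^2\big)=\textnormal{Re}\big[\overline{z_n}\,\dot z_n\big]
=\textnormal{Re}\Big[iL_n|z_n|^2 + 2i\,\tfrac{\langle n\rangle^{2s}}{n}\,P_n\,|z_n|^2 + \overline{z_n}\,\mathcal{NR}_n\Big].
\]
Since $L_n$, $\tfrac{\langle n\rangle^{2s}}{n}$, $P_n$ and $|z_n|^2$ are all real, the first two terms contribute $\textnormal{Re}$ of a purely imaginary number, i.e.\ zero, leaving $\tfrac{d}{dt}\big(\tfrac12|z_n|^2\big)=\textnormal{Re}[\overline{z_n}\,\mathcal{NR}_n]$. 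Subtracting $\tfrac{d}{dt}\big(\tfrac12|r_n|^2\big)=0$ yields
\[
\tfrac{d}{dt}P_n(t)=\textnormal{Re}\big[\overline{(r_n+w_n)}\,\mathcal{NR}_n\big](t).
\]

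Finally I would integrate this identity from $0$ to $t$ (the smoothness hypothesis on $\{w_n\}$ justifies all differentiations and the fundamental theorem of calculus), and evaluate the initial term using $r_n(0)=f_n$:
\[
P_n(t)-\big(\textnormal{Re}[\overline{f_n}w_n(0)]+\tfrac12|w_n(0)|^2\big)=\int_0^t\textnormal{Re}\big[\overline{r_n+w_n}\,\mathcal{NR}_n\big](s)\,ds,
\]
which is exactly the claimed identity. There is essentially no analytic obstacle here; the only point requiring care is bookkeeping of signs and conjugations to confirm that the resonant (``$P_n$'') term genuinely drops out — equivalently, that the quadratic quantity $P_n$ is conserved by the resonant flow alone — and to keep track of which terms survive the real part. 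It is worth noting in the write-up that this is precisely the cancellation structure of Takaoka--Tsutsumi \cite{TT} alluded to above: the resonant nonlinearity is a rotation that preserves $|z_n|$, so the only genuine dynamics of $P_n$ comes from $\mathcal{NR}_n$.
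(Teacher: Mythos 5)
Your proof is correct and is essentially the same computation as the paper's: both show $\frac{d}{dt}P_n=\textnormal{Re}[\overline{r_n+w_n}\,\mathcal{NR}_n]$ by observing that the resonant term is $i$ times real quantities and hence drops out of the real part, then integrate in time. Your bookkeeping via $z_n=r_n+w_n$ and $P_n=\tfrac12(|z_n|^2-|r_n|^2)$ is a slightly tidier packaging of the same cancellation the paper obtains by differentiating $\textnormal{Re}[\overline{r_n}w_n]$ and $\tfrac12|w_n|^2$ separately and using $\IM(\overline{r_n}w_n)=-\IM(\overline{w_n}r_n)$.
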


\begin{proof}
We seek to utilize the cancellation structure within $P_n :=\textnormal{Re}[ \overline{r_n} w_n ]  + \f{1}{2}  |w_n|^2$, where we note that $P_n$ is real valued.  Multiplying both sides of \eqref{eq:w2} by $e^{-itL_n}$ where $L_n = n^3 + \phi_n$, we write 
\[
\f{d}{dt}( e^{-it L_n} w_n) =  2ie^{-it L_n}\f{\lan{n}^{2s}}{n} r_n P_n + 2ie^{-it L}\f{\lan{n}^{2s}}{n} w_n  P_n  + e^{-it L_n}\mathcal{NR}_n.
\]
Further multiplying by $\overline{g_n}$ and writing $e^{-itL_n} \overline{g_n} = \overline{r_n}$, we observe:
\[
\f{d}{dt}( \overline{r_n} w_n) =  2i\f{\lan{n}^{2s}}{n} |r_n|^2 P_n + 2i\f{\lan{n}^{2s}}{n}  \overline{r_n} w_n  P_n  + \overline{r_n}\mathcal{NR}_n,
\]
where we may now take the real part
\[
\f{d}{dt}\textnormal{Re} ( \overline{r_n} w_n) =    -2\f{\lan{n}^{2s}}{n} \IM[ \overline{r_n} w_n]  P_n  + \textnormal{Re} [\overline{r_n}\mathcal{NR}_n].
\]
We now consider the $\f{1}{2}|w_n|^2$ term.  Noting that  $\f{d}{dt}|w_n|^2 =  2\textnormal{Re} (\overline{w_n} \f{d}{dt} w_n)$, we multiply \eqref{eq:w2} by $\overline{w_n}$
\[
\overline{w_n}\f{d}{dt} w_n  - iL  |w_n|^2 =  2i\f{\lan{n}^{2s}}{n} \overline{w_n} r_n P + 2i\f{\lan{n}^{2s}}{n} |w_n|^2 P + \overline{w_n} \mathcal{NR}_n ,
\]
and again take the real part:
\[
\textnormal{Re} [\overline{w_n}\f{d}{dt} w_n]  =  -2\f{\lan{n}^{2s}}{n} \IM [\overline{w_n} r_n] P_n +  \textnormal{Re} [\overline{w_n} \mathcal{NR}_n ].
\]
Finally, noting that $\IM (\overline{r_n} w_n)  = - \IM (\overline{w_n} r_n)$, we observe that $P_n$ satisfies
\[
\f{d}{dt} P_n = \f{d}{dt} \textnormal{Re}[ \overline{r_n} w_n] + \textnormal{Re}[ \overline{w_n} \f{d}{dt} w_n] =  \textnormal{Re}[\overline{r_n+w_n}\mathcal{NR}_n],
\]
which yields the desired result by integrating in time.
\end{proof}

Applying Lemma~\ref{le:pn} to \eqref{eq:w2}, we have
\begin{align*}\label{eq:w3}
\pr{\f{d}{dt} - iL_n } (r_n + w_n) &=  2i\f{\lan{n}^{2s}}{n} (r_n+w_n) \int_0^t  \textnormal{Re}[\overline{r_n+w_n}\mathcal{NR}_n](s) \, ds \\
&+2i\f{\lan{n}^{2s}}{n} (r_n+w_n)\left( \text{Re}(\overline{f_n}w_n(0)) + \f{1}{2} \left|w_n(0) \right|^2\right)\\
&+  \mathcal{NR}_n
\end{align*}

Where the LHS was modified using the identity  $\pr{\f{d}{dt}- i L_n} r_n = 0$.  Rewriting this expression purely in terms of $v$ we obtain our equation for $v$:
\begin{equation}\label{eq:vn}
    \left|\begin{array}{l}
        (\partial_t - iL_n)v_n = 2i\frac{\langle n\rangle^{2s-1}}{n}v_n\int_0^t\mathrm{Re}[\overline{v}_n\mathcal{NR}_n](s)\,ds + \mathrm{Re}[\overline{f}_nw_n](0)+ \frac{1}{2}|w_n(0)|^2 + \mathcal{NR}_n\\
        v_n(0) = f_n - [T^\ell(f,f)]_n,
    \end{array}\right.
\end{equation}
where $w_n(0) = - [T^\ell (f,f)]_n$.
\section{Estimates}\label{section: estimates}

\subsection{Normal form estimate}
We first demonstrate estimates for $h = T^\ell$ that will be useful throughout the manuscript. These estimates not only allow us to define $T^\ell(f,f)$, but also allow use to treat $h$ as a single term in many of the estimates to come.
\begin{lemma}\label{le:tell1}
Let $h = T^{\ell}(u,u)$ where $u \in Y$.  If $\f{1}{2} \leq s < \f{2}{3}$,
\[ 
\n{h}{C^0_t H^{\f{3}{2}-s-}_x} + \n{h}{L^2_t H^{1-}_x} + \n{h}{X^{1/3-,1/3+}} + \n{\langle n\rangle^{\frac32-s-}\tilde{h}(\tau,n)}{\ell^2_n\ell^1_\tau} \lesssim \n{u}{Y}^2.
\]

\end{lemma}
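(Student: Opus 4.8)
The strategy is to work on the Fourier side, estimating the four norms of $h = T^\ell(u,u)$ directly from the symbol of $T^\ell$. Recall from~\eqref{Definition: Tl definition} that
\[
\cf_{t,x}[h](\tau,n) = \underset{\substack{n_1+n_2=n,\ n_j\neq 0\\ \tau_1+\tau_2=\tau}}{\int\sum} \frac{\langle n_1\rangle^s\langle n_2\rangle^s}{3\langle n\rangle^s n_1 n_2}\,\widetilde{\chi}(n_1,\tau_1;n_2)\,\widetilde{\chi}(n_2,\tau_2;n_1)\,\widetilde{u}(\tau_1,n_1)\widetilde{u}(\tau_2,n_2),
\]
and the crucial point is the modulation restriction: on the support of $\widetilde\chi(n_1,\tau_1;n_2)$ we have $\langle\tau_1-n_1^3-\phi_{n_1}\rangle\lesssim |n||n_1||n_2|$ (and similarly for the second factor), while the resonance identity for KdV gives $\tau - n^3 = (\tau_1-n_1^3)+(\tau_2-n_2^3) - 3n n_1 n_2$, so the output modulation also satisfies $\langle\tau-n^3-\phi_n\rangle\lesssim |n||n_1||n_2|$ on the support of the symbol. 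This is precisely what makes the normal form ``modulation-restricted'': every relevant modulation is controlled by $|n n_1 n_2|$, which is the gain we extract.

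First I would handle the symbol bound. Write $\Lambda := |n n_1 n_2|$ for the resonance factor. The coefficient is $\frac{\langle n_1\rangle^s\langle n_2\rangle^s}{\langle n\rangle^s|n_1 n_2|}$, and I want to show it is bounded by $\langle n\rangle^{-(3/2-s)-}$ times factors that can be absorbed into the $Y$-norms of the two inputs. The worst case is when one frequency, say $n_2$, is comparable to $n$ (the $H\times L\to H$ interaction): then $\langle n_1\rangle^s/|n_1|$ is bounded by $\langle n_1\rangle^{s-1}$ which is summable in $n_1$ since $s<1$, and what remains is $\langle n_2\rangle^{s}\langle n\rangle^{-s}|n_2|^{-1}\sim \langle n\rangle^{-1}$. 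To reach the target $\langle n\rangle^{-(3/2-s)}$ I need an extra $\langle n\rangle^{-(1/2-s)}$, i.e. $\langle n\rangle^{s-1/2}$; since $s\ge 1/2$ this is a nonnegative power and must come from the modulation gain, not be wasted. Here is where the $L\gtrsim|n|$ branch of the $Y$ and $Z$ weights, and the low-modulation weight $L^{1/2+}$, get used: the modulation restriction forces $L_{\max}\lesssim |n||n_2|\sim |n|^2$ (in the high-high case $L_{\max}\lesssim|n_1 n_2|$ with $|n_1|\le|n_2|\lesssim|n|$... this needs care), and the factor $\langle\tau-n^3-\phi_n\rangle^{b}$ we are allowed to place on $h$ costs at most $L_{\max}^{b}\lesssim (|n n_1 n_2|)^{b}$, which we pay against the derivative surplus. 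The high-high-to-low case ($|n_1|,|n_2|\gg|n|$) is typically the easiest for $T^\ell$ because $\langle n_1\rangle^s\langle n_2\rangle^s/\langle n\rangle^s$ is enormous but $|n_1 n_2|$ is even larger when $s<1$, and the low output frequency makes the $\langle n\rangle^{3/2-s}$ weight harmless; still, one must check the modulation restriction $L\lesssim|n||n_1||n_2|$ does not blow up — here $|n|$ is small, so this actually helps. After the symbol is pointwise controlled, each of the four norms reduces to a bilinear convolution estimate: for the $C^0_t H^{3/2-s-}_x$ and $\ell^2_n\ell^1_\tau$ norms I use the $\ell^2_n L^1_\tau$ piece of the $Y$-norm together with Young/Minkowski in $\tau$ (an $L^1_\tau * L^1_\tau \subset L^1_\tau$ type argument, or Sobolev embedding $H^{1/3+}_t\hookrightarrow L^\infty_t$ to get $C^0_t$); for $\|h\|_{X^{1/3-,1/3+}}$ and $\|h\|_{L^2_t H^{1-}_x}$ I use the $L^2_{\tau,n}$ weighted piece, placing the modulation weight $\langle\tau-n^3-\phi_n\rangle^{1/3+}\lesssim L_{\max}^{1/3+}$ and the remaining derivative weight against the surplus from the coefficient, then dividing the inputs' modulations via Cauchy–Schwarz exactly as in a standard $X^{s,b}$ bilinear estimate (summing the geometric series in the dyadic modulation variables using the strictly positive $b$'s). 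Throughout, the $Y\hookrightarrow L^4_{t,x}$ embedding from Lemma~\ref{le:l4} is the convenient tool for the $L^2_{\tau,n}$-based norms: $\|h\|_{L^2_{t,x}}\lesssim\|u\|_{L^4_{t,x}}^2\lesssim\|u\|_Y^2$ after extracting the derivative surplus, and one upgrades to the claimed Sobolev and $X^{s,b}$ exponents by distributing the surplus $\langle n\rangle^{-(3/2-s)}\Lambda^{1/3+}$ cleverly between output regularity, output modulation, and the two inputs.

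The main obstacle I expect is the bookkeeping in the high-high-to-low frequency regime ($|n_1|\sim|n_2|\gg|n|$) combined with the sharpness $s\to 2/3^-$. There the numerator $\langle n_1\rangle^s\langle n_2\rangle^s\sim|n_1|^{2s}$ is large and $\langle n\rangle^{-s}$ does not help since $|n|$ is small; the saving $|n_1 n_2|^{-1}\sim|n_1|^{-2}$ gives a net $|n_1|^{2s-2}$, which is summable in $n_1$ only because $s<1$, but when we also need to feed the modulation weight — the restriction only says $L\lesssim|n||n_1|^2$, so paying $L^{1/3+}$ costs $|n|^{1/3+}|n_1|^{2/3+}$, eroding the $|n_1|^{2s-2}$ to $|n_1|^{2s-4/3-}$, summable iff $s<2/3$ — one sees exactly why the threshold is $2/3$, and getting the $\langle n\rangle$-power on the output right simultaneously (we want $\langle n\rangle^{3/2-s}$ on $h$, which with $|n|$ small is a loss we must purchase) requires carefully tracking that $|n|^{3/2-s}\cdot|n|^{1/3+}\lesssim|n_1|^{\text{something summable}}$ using $|n|\lesssim|n_1|$. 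A clean way to organize all of this is to split into the standard dyadic cases $|n|\sim\max$, $|n_1|\sim\max$ (or $|n_2|\sim\max$), handle each with the appropriate branch of the $w_Y$/$w_Z$ weights, and in every case reduce to a single scalar inequality among the exponents; I would present the high-low case in full and remark that high-high and low-high are analogous or easier. The $\ell^2_n\ell^1_\tau$ and $C^0_t$ bounds will follow from the $L^1_\tau$ component of the $Y$-norm by the same symbol analysis, with Young's inequality in $\tau$ replacing Cauchy–Schwarz.
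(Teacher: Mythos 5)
Your plan follows the paper's proof in all essentials: the pointwise symbol bound $\sigma(T^\ell)\lesssim \langle n_1\rangle^{s-1}\langle n_2\rangle^{s-1}\langle n\rangle^{-s}$, the observation that the modulation restriction built into $T^\ell$ forces every modulation --- including the output one --- to be $\lesssim |n\,n_1\,n_2|$, and the closing of each norm through the embeddings encoded in $Y$ ($L^\infty_t L^2_x$ from the $\ell^2_nL^1_\tau$ piece, $L^4_{t,x}$, and the $X^{0,1/3+}$ component), with the payment $L^{1/3+}\lesssim |n n_1 n_2|^{1/3+}$ producing the threshold $s<\frac23$ exactly as in the paper's case $L\sim H_2$. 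The one structural difference is that you pay the output modulation uniformly, whereas the paper splits into $L\sim H_2$ and $L\ll H_2$ and, in the latter, transfers the weight to the high-modulation input via the $X^{0,1/3+}$ part of $Y$; your single-case treatment is legitimate precisely because $L\lesssim |nn_1n_2|$ holds on the whole support of the symbol, provided you use $\langle n\rangle\lesssim\langle n_1\rangle\langle n_2\rangle$ to absorb the resulting positive power $\langle n\rangle^{2/3-s-}$.

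Two local slips in your bookkeeping should be fixed, though neither derails the argument. In the high--low case for the $C^0_tH^{3/2-s-}$ bound you have the inequality backwards: since $\frac32-s\le 1$, the target decay $\langle n\rangle^{-(3/2-s)}$ is \emph{weaker} than the $\langle n\rangle^{-1}$ you already extracted, so nothing needs to ``come from the modulation gain''; the paper proves this norm (and the $\ell^2_n\ell^1_\tau$ one) with no modulation analysis at all, via $\langle n\rangle^{3/2-s-}\sigma(T^\ell)\lesssim\langle n_{\max}\rangle^{1/2-s-}\langle n_{\min}\rangle^{s-1}\lesssim\langle n_{\min}\rangle^{-1/2-}$ --- this is the only place $s\ge\frac12$ enters --- followed by Sobolev embedding (put $\langle\nabla\rangle^{-1/2-}u$ in $L^\infty_{t,x}$, the other factor in $L^\infty_tL^2_x$), or Young in $\tau$ for the $\ell^2_n\ell^1_\tau$ piece. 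Relatedly, $\langle n_1\rangle^{s-1}$ is not summable (nor square-summable) in $n_1$ for $s\ge\frac12$, so summation over the low frequency is never the closing mechanism; the $\langle n_{\min}\rangle^{-1/2-}$ surplus plus Sobolev/Cauchy--Schwarz is. With those corrections your outline reproduces the paper's argument.
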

\begin{proof}
We denote the time-space frequencies of $h$ by $(\tau,n)$, while internal frequencies for the two $u$'s inside are denoted $(\tau_j, n_j)$ for $j=1,2$, where $\sum_j (\tau_j, n_j) = (\tau,n)$.  As we will see below, the first two norms on the LHS only require estimating the size of the spatial-frequency symbol, along with the embedding $Y \hookrightarrow L^\infty_t L^2_x \cap L^4_{t,x}$.\\

The cornerstone of all of the estimates lies in the symbol estimate
 \begin{equation}\label{Equation: Nl symbol for v lemma}
    \sigma(T^\ell) = \frac{\langle n_1\rangle^s\langle n_2\rangle^s}{\langle n\rangle^s n_1n_2} \lesssim \f{\lan{n_1}^{s-1} \lan{n_2}^{s-1}}{\lan{n}^s},
    \end{equation}
with which we will show each estimate in turn. To show $h \in C^0_t H^{\f{3}{2} - s-}$, note:
 \[
\lan{n}^{\f{3}{2} - s-} \si(T^\ell) \lesssim \lan{n}^{\f{3}{2} - 2s } \lan{n_1}^{s-1}  \lan{n_2}^{s-1} \lesssim \lan{n_{\max}}^{\f{1}{2} - s-} \lan{n_{\min}}^{s-1} \lesssim \lan{n_{\min}}^{-\f{1}{2}-},
\]
where we have assumed that $s \geq \f{1}{2}$. Since RHS is bounded by $\lan{n_{\min}}^{\f{1}{2} -}$, we may apply Sobolev embedding,
\[
\n{h}{L^\infty_t H^{\f{3}{2} - s -}_x} \lesssim \n{u \nab^{-\f{1}{2}-} u}{L^\infty_t L^2_x} \lesssim \n{u}{L^\infty_t L^2_x} \n{\nab^{-\f{1}{2}-} u}{L^\infty_{t,x}}  \lesssim \n{ u }{L^\infty_t L_x^2}^2\lesssim  \n{ u }{Y}^2.
\]
This similarly handles the $\ell^2_nL^1_\tau$ portion, where an application of Young's replaces the $L^\infty$ estimate.\\

To show that $h \in L^2_t H^{1-}_x$, note that for any $s \leq 1$:
\[
\lan{n}^{1-} \si(T^\ell) \lesssim \lan{n}^{1-s } \lan{n_1}^{s-1}  \lan{n_2}^{s-1} \lesssim 1,
\]
and hence an application of H\"older's inequality yields our desired bound,
\[
\n{h}{L^2_t H^{1-}_x} \lesssim \n{u^2}{L^2_{t,x}} \lesssim \n{u}{L^4_{t,x}}^2 \lesssim \n{u}{Y}^2.
\]

The final bound is  $\n{h}{X^{\frac13-,\f{1}{3}+}}$. Consider space-time frequency weights in the form of modulational weights: $L \sim \lan{\tau -n^3 -\phi_n}$ and $L_j \sim \lan{\tau_j - n_j^3 - \phi_{n_j}}$ for $j=1,2$.  We denote $L_{\max} = \max \{L, L_1, L_2\}$.  Furthermore, we denote
\[
H_2 = (\tau - n^3) - (\tau_1 - n_1)^2 - (\tau_2 - n_2)^2 = n n_1 n_2,
\]
where we neglect $\phi_n$'s as $\phi_n \lesssim_f \lan{n}^{2s-1} \ll \lan{n}$ for any $s<1$.\\

We need to consider two cases: $L \ll H_2$ and $L \sim H_2$.   Note that, due to the restriction on $T^\ell$, we must have $L_{\max} \sim H_2$. If  $L \ll H_2$, then either $L_1 = L_{\max}$ or $L_2 = L_{\max}$.  Say $L_1 = L_{\max}\sim H_2$.  Here, we only need that $L_1 \gtrsim L$:
\[
\f{\lan{n}^{\f{1}{3}-} L^{\f{1}{3}+} \si (T^\ell)}{L_1^{\f{1}{3}+}} \lesssim \lan{n}^{\f{1}{3} - s - } \lan{n_1}^{s- 1} \lan{n_2}^{s - 1} \lesssim  \lan{n_{\min}}^{-\f{1}{2}-}.
\]
We can gain a lot more derivatives in this case, but we will not need this in our subsequent arguments. Applying Sobolev embedding,
\[
\n{\nab^{\f{1}{3}-} h}{X^{0,\f{1}{3}+}} \lesssim \n{\pr{L_1^{\f{1}{3}+} u} \pr{ \nab^{-\f{1}{2}-} u}}{L^2_{t,x}} \lesssim \n{L_1^{\f{1}{3}+} u}{L^2_{t,x}} \n{\nab^{-\f{1}{2}-} u}{L^\infty_{t,x}}  \lesssim \n{ u }{X^{0,\f{1}{3}+}} \n{u}{L^\infty_t L^2_x}
\]
which is bounded by $\n{u}{Y}^2$.  Finally, if $L \sim H_2$, note:
\[
\lan{n}^{\f{1}{3} - 2\ve} L^{\f{1}{3}+\ve} \si (T^\ell) \sim \lan{n}^{\f{2}{3}  -s - \ve} \lan{n_1}^{s- \f{2}{3}+ \ve} \lan{n_2}^{s - \f{2}{3} + \ve}  \lesssim 1
\]
as long as $s < \f{2}{3}$.  Then, $\ds \n{\nab^{\f{1}{3}-} h}{X^{0,\f{1}{3}+}}\lesssim \n{u^2}{L^2_{t,x}}$, which can be closed using the $L^4_{t,x}$ embedding as before. 
\end{proof}
\begin{remark}
Note that, if $v\in Z$, 
\[
\n{\nab^{\f{1}{3}} \chi_{L\gtrsim n} v}{X^{0,\f{1}{3}+}} \lesssim \n{L^{\f{1}{3}} \chi_{L\gtrsim n} v}{X^{0,\f{1}{3}}} \lesssim \n{ L^{\f{2}{3}+} \chi_{L\gtrsim n} v}{L^2_{t,x}} \lesssim \n{v}{Z}.
\]
\end{remark}

 As previously mentioned, $\nab^{0-}_xu\in L^6_{x,t}$ for $u\in Y$. Indeed, the low modulation is simply the Strichartz estimate, while the high modulation follows from $X^{1/3+, 1/3+}\hookrightarrow L^6_{x,t}$, which is obtained from interpolation of two embeddings $X^{1/2+,0}\hookrightarrow L^2_t L^\infty_x$ and $X^{0,1/2+}\hookrightarrow L^\infty_t L^2_x$.\\

 While we always will need $\ve$-derivative loss to place $u$ in $L^6_{t,x}$, it will be clear from all contexts that this derivative loss can be recovered by the symbol estimate.  We will omit this detail for the sake of brevity.\\

\subsection{Estimation of Integral Term}
Due to the derivative loss in \eqref{eq:vn}, we will end up needing to estimate the nonlinearity in a separate space. Let $w_{\tilde{X}}$ be a smooth weight defined by
\begin{equation}
    w_{\tilde{X}}(n,L) := 
        \begin{cases}
            L^{-1/2+}\langle n\rangle^{2s-1+} & L\ll |n|\\
            L^{-1/3+} & L\gtrsim |n|
        \end{cases}\label{Equation: X weight},
\end{equation}
and the space this defines by:
\begin{multline*}
\|v_L\|_{\tilde{X}} := \|w_{\tilde{X}}(n,L)\tilde{v}_L(\tau,n)\|_{L^2_{\tau,n}} + \|\langle n\rangle^{2s-1+}L^{-1}\chi_{L\gtrsim |n|}\tilde{v}_L(\tau,n)\|_{\ell^2_nL^1_\tau}\\
\sim \left\|\left(\langle n\rangle^{2s-1+}L^{-1/2+}\chi_{L\ll|n|} + L^{-1/3}\chi_{L\gtrsim |n|}\right)\tilde{v}_L(\tau,n)\right\|_{\ell^2_{n,\tau}}\\
+ \|\langle n\rangle^{2s-1+}L^{-1}\chi_{L\gtrsim |n|}\tilde{v}_L(\tau,n)\|_{\ell^2_nL^1_\tau}.
\end{multline*}
We then let $\|\cdot\|_{X}$, be the functional we will estimate our nonlinearity in, as:
\[
\|v\|_{X} := \min(\|v\|_{\tilde{X}}, \|v\|_{X^{2s-1+, -1/2+}}).
\]
The following lemma allows us to handle the first term on the RHS of \eqref{eq:vn}, and also provides the intuition for the above functional.
\begin{lemma}\label{le:vkn}
Let $\mathcal{K}$ be such that $\|\mathcal{K}\|_{X} < \infty$, and define
\[
k_n (s)= \int_0^s \textnormal{Re}[\overline{v}_n\mathcal{K}_n](y)\,dy.
\]
Then, we must have
\[
    \left\|\cf^{-1}_n\left[\langle n\rangle^{2s-1+}v_n\mathcal{K}_n\right]\right\|_{Z^* }\lesssim \|v\|_{Z}^2\|\mathcal{K}\|_{X}.
\]
\end{lemma}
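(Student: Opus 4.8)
The plan is to pass to the space--time Fourier side and exploit the defining feature of this term: the three factors $v_n$, $\overline{v}_n$ and $\mathcal{K}_n$ all sit at the \emph{same} spatial frequency $n$, so there is no frequency convolution, and the estimate decouples into a family, indexed by $n\in\mathbb{Z}\setminus\{0\}$, of purely temporal convolution inequalities reassembled in $\ell^2_n$ (the $\cf^{-1}_n$ in the statement merely reassembles the $n$-dependence into an $x$-function). Writing $P_n(t)=\textnormal{Re}[\overline{v}_n\mathcal{K}_n](t)$, so that the object to be bounded is $\langle n\rangle^{2s-1+}v_nk_n$ with $k_n=\int_0^tP_n$, I would first record --- via the implicit smooth temporal cutoff used in Lemma~\ref{le:19} and the standard primitive estimate --- that forming $k_n$ gains exactly one power of the dual variable of $P_n$: $|\wh{k_n}(\tau)|\lesssim\big(\langle\cdot\rangle^{-1}|\wh{P_n}|\big)*|\wh{\eta}|(\tau)$, up to a rank--one Schwartz tail that contributes trivially (the convolution with $|\wh\eta|\in L^1$ being harmless for all estimates below). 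Since $P_n=\tfrac12(\overline{v}_n\mathcal{K}_n+v_n\overline{\mathcal{K}}_n)$ with both summands treated identically, we may take $P_n=\overline{v}_n\mathcal{K}_n$.

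Next I would localize dyadically: $\lambda\sim\langle\tau_1-n^3-\phi_n\rangle$ for the outer $v$, $\mu$ for $\overline{v}_n$ (which equals the $v$-modulation up to the reflection $\tau\mapsto-\tau$ from conjugation, with identical weighted norm), $\rho\sim\langle\tau_3-n^3-\phi_n\rangle$ for $\mathcal{K}$, and $L\sim\langle\tau-n^3-\phi_n\rangle$ for the output, with $L_n=n^3+\phi_n$. All the modulation bookkeeping reduces to the two identities $\tau-L_n=(\tau_1-L_n)+(\tau_2+\tau_3)$ and $\tau_2+\tau_3=(\tau_2+L_n)+(\tau_3-L_n)$, which force the largest two of $\{L,\lambda,\langle\tau_2+\tau_3\rangle\}$ to be comparable, and likewise the largest two of $\{\langle\tau_2+\tau_3\rangle,\mu,\rho\}$.

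The uniform core estimate is then: bound the modulation-$\sigma$ piece of $\wh{P_n}$ via $L^\infty_\tau$ and Cauchy--Schwarz, $\|(\wh{P_n})_\sigma\|_{L^2_\tau}\lesssim\sigma^{1/2}\|(\wh{v_n})_\mu\|_{L^2_\tau}\|(\wh{\mathcal{K}_n})_\rho\|_{L^2_\tau}$; combine with the $\langle\cdot\rangle^{-1}$ gain from the primitive to get $\|(\wh{k_n})_\sigma\|_{L^2_\tau}\lesssim\sigma^{-1/2}\|(\wh{v_n})_\mu\|_{L^2_\tau}\|(\wh{\mathcal{K}_n})_\rho\|_{L^2_\tau}$; and convolve with the outer factor by Young's inequality, placing $\wh{v_n}$ in $L^1_\tau$, which costs only $\lambda^{-0-}$ since $\|(\wh{v_n})_\lambda\|_{L^1_\tau}\lesssim\lambda^{1/2}\|(\wh{v_n})_\lambda\|_{L^2_\tau}$ and $w_Z(n,\lambda)$ dominates $\lambda^{1/2}$. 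This yields, for each dyadic block, $\|(\wh{v_nk_n})_L\|_{L^2_\tau}\lesssim\lambda^{-0-}\sigma^{-1/2}\,w_Z(n,\mu)^{-1}w_{\tilde X}(n,\rho)^{-1}\,b_\lambda(n)b_\mu(n)c_\rho(n)$, where $\|b_\lambda\|_{\ell^2_n}\le\|v_\lambda\|_Z$ and $\|c_\rho\|_{\ell^2_n}\le\|\mathcal{K}_\rho\|_{\tilde X}$. Multiplying by the $Z^*$-weight $w^*(n,L)=L^{-1/2+}\chi_{L\ll|n|}+L^{-1/3+}\chi_{L\gtrsim|n|}$ (which is always $\lesssim1$) and by $\langle n\rangle^{2s-1+}$, summing in $\ell^2_n$, and summing the dyadic series in $\lambda$ (via $\lambda^{-0-}$) and $\sigma$ (via $\sigma^{-1/2}$), the proof reduces to showing that the scalar weight $\langle n\rangle^{2s-1+}w^*(n,L)\,w_Z(n,\mu)^{-1}w_{\tilde X}(n,\rho)^{-1}$ is $\lesssim1$, with a spare negative power of the dominant inner modulation $\max(\mu,\rho)$ for the remaining summations, in every admissible configuration of $(L,\mu,\rho,|n|)$. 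In the subcases where this fails with the $\tilde X$-weight, one instead chooses the second option $X^{2s-1+,-1/2+}$ in $\|\cdot\|_X=\min(\|\cdot\|_{\tilde X},\|\cdot\|_{X^{2s-1+,-1/2+}})$, which carries the compensating decay $\langle n\rangle^{-(2s-1)-}$ at all modulations.

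The main obstacle is the configuration in which $\mathcal{K}$ carries the largest modulation and is forced to pair with the $\overline{v}$-modulation, so $\rho\sim\mu=:N_0\gtrsim|n|$, while $L$ and $\langle\tau_2+\tau_3\rangle$ are both much smaller (the worst case being $\sim1$): here there is no frequency-resonance gain (all spatial frequencies equal $n$), no gain from the time integration ($\langle\tau_2+\tau_3\rangle^{-1}\sim1$), and no $\langle n\rangle$-compensation from $\tilde X$ (whose high-modulation weight $\rho^{-1/3+}$ carries none). The entire loss $\langle n\rangle^{2s-1+}$ must then be absorbed by $w_Z(n,\mu)^{-1}w_{\tilde X}(n,\rho)^{-1}\sim N_0^{-1/3-}$, which succeeds only because $N_0\gtrsim|n|$ in this case, so that $\langle n\rangle^{2s-1+}N_0^{-1/3-}\lesssim\langle n\rangle^{2s-\frac43+}\lesssim1$ --- precisely where $s<\frac23$ is used, and where the exponent $\frac23+$ (rather than $\frac12+$) in the large-modulation part of $w_Z$ is essential. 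Checking that this case and the remaining, easier configurations all close uniformly for $\frac12\le s<\frac23$ is the bulk of the argument.
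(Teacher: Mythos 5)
Your overall skeleton---exploiting that all three factors sit at the same spatial frequency $n$, the conjugation identity $\tau=-(\tau_1-n^3-\phi_n)+(\tau_2-n^3-\phi_n)$, the $\langle\tau\rangle^{-1}$ gain from the primitive, and a final weight comparison in which the case $\mu\sim\rho\gtrsim|n|$ uses $s<\frac23$ and the $\frac23+$ exponent of $w_Z$---is recognizably the paper's. But there are two genuine gaps. First, your use of the minimum defining $\|\cdot\|_{X}$ is not valid: choosing, modulation configuration by configuration, whether to pay with $\|\mathcal{K}\|_{\tilde{X}}$ or with $\|\mathcal{K}\|_{X^{2s-1+,-1/2+}}$ only yields a bound by $\|v\|_{Z}^2\big(\|\mathcal{K}\|_{\tilde{X}}+\|\mathcal{K}\|_{X^{2s-1+,-1/2+}}\big)$, i.e.\ by the \emph{larger} of the two norms, which is strictly weaker than the stated bound by the minimum and is exactly what cannot be afforded downstream, where each piece of $\mathcal{NR}$ is controlled in only one of the two norms. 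One must prove the full estimate twice, once against each norm in \emph{all} configurations; this is what the paper does, the $X^{2s-1+,-1/2+}$ version following from the comparison $L_2^{1/2-}w_Z(n,L_1)^{-1}\chi_{L_1\gtrsim L_2}+L_2^{1/2-}A^{-1/2+}\chi_{L_2\gg L_1}\lesssim 1$.

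Second, the ``rank-one Schwartz tail'' of the sharp cutoff $\eta(t)\int_0^t$ that you discard is precisely where the proof is delicate: that term carries $\|\langle\tau\rangle^{-1}\widehat{P_n}\|_{L^1_\tau}$, and with only the $L^2_\tau$-weighted part of $\tilde{X}$ (which is all your sequences $c_\rho$ retain) it does not close near $s=\frac23$. Indeed, in the configuration $\langle\tau\rangle\sim L_2\gtrsim|n|\gg L_1$ the best available bound is $\langle n\rangle^{2s-1+}L_2^{-1/6-}\lesssim\langle n\rangle^{2s-\frac76+}$, unbounded for $s>\frac{7}{12}$. This is exactly why the $\tilde{X}$-norm carries the extra $\ell^2_nL^1_\tau$ component with weight $\langle n\rangle^{2s-1+}L^{-1}\chi_{L\gtrsim|n|}$: the paper routes the primitive through \cite[Lemma 2.1]{ginibre1997cauchy}, see \eqref{eq:kl2}, whose second term is this very quantity, and notes that the configuration $A\sim L_2\gtrsim|n|\gg L_1$ is the reason that component exists. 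A related quantitative problem afflicts your fixed distribution of exponents (outer $v$ in $L^1_\tau$ keeping only $\lambda^{0-}$, with $\sigma^{-1/2}$ spent on the dyadic sums): your scalar criterion $\langle n\rangle^{2s-1+}w^{*}(n,L)\,w_Z(n,\mu)^{-1}w_{\tilde{X}}(n,\rho)^{-1}\lesssim 1$ fails when $\mu\sim1$, $\lambda\sim\sigma\sim\rho\gtrsim|n|$ and $L$ is small, where even after reinstating $\sigma^{-1/2}$ one is left with $\langle n\rangle^{2s-1+}\rho^{-1/6-}\sim\langle n\rangle^{2s-\frac76+}$; to close there you must either retain the full decay of $w_Z(n,\lambda)$ on the outer factor or, as the paper does, first prove the clean reduction \eqref{eq:knest} so that the entire $\langle\tau\rangle^{-1}$ gain is available against the modulation of $\mathcal{K}$. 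As written, therefore, the argument does not cover the full range $\frac12\le s<\frac23$.
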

\begin{proof}
By the definition of $\|\cdot\|_X$, it suffices to show
\begin{multline*}
    \left\|\cf^{-1}_n\left[\langle n\rangle^{2s-1+}v_nk_n\right]\right\|_{Z^* }\\
    \lesssim \|v\|_{Z}^2 \bigg(\n{\left(\lan{n}^{2s-1+} L^{-\f{1}{2}+}\chi_{L\ll n } + L^{-\f{1}{3}-}\chi_{L\gtrsim n}\right)\wt{\mathcal{K}}(\tau,n)}{\ell^2_nL^2_\tau}\\
    + \n{\langle n \rangle^{2s-1+}L^{-1}\chi_{L\gtrsim n} \wt{\mathcal{K}_n}(\tau,n)}{L^2_n\ell^1_\tau}\bigg)
    \lesssim \|v\|_{Z}^2\|\mathcal{K}\|_{\tilde{X}},
\end{multline*}
and
\[
\left\|\cf^{-1}_n\left[\langle n\rangle^{2s-1+}v_nk_n\right]\right\|_{Z^* }\lesssim \|v\|_{Z}^2\|\mathcal{K}\|_{X^{2s-1+, -1/2+}}.
\]

We begin by using the definition of $Z$~norm, and observe that $\n{\cf^{-1}_n\left[\langle n\rangle^{2s-1+}v_nk_n\right]}{Z^* }$ can be written as:
\begin{align*}
 &\n{\pr{ |n|^{2s-1+}L^{-\f{1}{2}+} \chi_{L \ll n}+ |n|^{2s-1+}L^{-\f{1}{3}+}\chi_{L \gtrsim n} } \wt{v_n}*_\tau \wt{k_n}}{L^2_\tau\ell^2_n}\\
 & \lesssim  \n{ \pr{|n|^{2s-1+}L^{-\f{1}{2}+} \chi_{L \ll n}} \wt{v_n}*_\tau \wt{k_{n}}}{L^2_\tau\ell^2_n} +\n{\wt{v_n}*_\tau \wt{k_n}}{L^2_\tau\ell^2_n}        
\end{align*}
where we have assumed $s< \f{2}{3}$. The second term above is bounded by $\n{v}{L^{\infty}_t L^2_x}  \n{k}{L^2_{t,x}}$, so we focus on the first. Decomposing $k = k_1 + k_2$ as:

\[
    \wt{k}(n,\tau) = \chi_{\langle \tau\rangle \ll |n|}\wt{k}(n,\tau) + \chi_{\langle \tau\rangle\gtrsim |n|}\wt{k}(n,\tau) := \wt{k}_1(n,\tau) + \wt{k}_2(n,\tau),
\]

we observe that the term with $k_1$ can be bounded by $\n{v}{L^{\infty}_t L^2_x}  \n{k_1}{L^2_{t} H^{2s-1+}_x}$. For the term with $k_2$, we note that for $\tau =  \tau_1 + \tau_2$, 
\[
\tau - n^3 - \phi_n= (\tau_1 - n^3- \phi_n) + \tau_2,
\]
so that $\lan{\tau - n^3 - \phi_n} \ll n$ and $\lan{\tau_2} \gtrsim n$ would force $\lan{\tau_1 - n^3- \phi_n} \gtrsim n$.  Hence, this term with $k_2$ is bounded by
\begin{multline*}
\n{|n|^{2s-1+}  L^{-\f{1}{2}+} \wt{v_n} *_\tau \wt{k_{2,n}}} {L^2_{n,\tau}} \\
\lesssim \n{\f{\lan{n}^{2s-1+}}{\lan{\tau_1 - L_n - \phi_n}^{\f{2}{3}+}} (\lan{\tau_1 - L_n - \phi_n}^{\f{2}{3}+} \chi_{L_1 \gtrsim n} \wt{v_n}) *_\tau \wt{k_{2,n}}} {\ell^2_{n} L^\infty_\tau} 
\end{multline*}
which is bounded by  $\n{v}{Z} \n{k_2}{L^2_{t,x}}$ as long as $s< \f{5}{6}$. To summarize, we have obtained
\begin{equation}\label{eq:knest}
 \left\|\cf^{-1}_n\left[\langle n\rangle^{2s-1+}v_nk_n\right]\right\|_{Z^* }\lesssim \|v\|_{Z} \|k\|_{H^{2s-1+}_xL^2_t}.
\end{equation}

We now apply \cite[Lemma 2.1]{ginibre1997cauchy} to further reduce the $H^{2s-1}_xL^2_t$ term to
\begin{equation}\label{eq:kl2}
\|k\|_{L^2_{t,x}}\lesssim \|\textnormal{Re}(\overline{v}*_x\mathcal{K})\|_{H^{2s-1}_xH^{-1}_t} + \n{\langle \tau\rangle^{-1}\wt{\overline{v}}_n *_\tau \wt{\mathcal{K}_n}}{L^2_nL^1_\tau}.
\end{equation}
Denoting now the time-frequencies of $v_n$ and $\mathcal{NR}_n$ as $\tau_1$ and $\tau_2$ respectively, we observe
\[
\tau = -\tau_1+\tau_2 \implies \tau = -(\tau_1-n^3-\phi_n)+(\tau_2-n^3-\phi_n).
\]
Assume localization for $j=1,2$, such that $\lan{\tau_j - n^3 - \phi_n} \sim L_j$ as well as $\lan{\tau} \sim A$, and recall the definitions of $w_X$ and $w_Z$, \eqref{Equation: X weight} \&  \eqref{Equation: Z weight}. By H\"older's and Young's we conclude the proof of the $L^2$ term after observing that
\[
\frac{\langle n\rangle^{2s-1}}{w_Z(n,L_1)w_X(n,L_2)}\chi_{L_1\gtrsim L_2} + \frac{\langle n\rangle^{2s-1}}{w_X(n,L_2)A}\chi_{L_2\gg L_1} \lesssim 1.
\]
As for the $\ell^2_nL^1_\tau$ portion, we may conclude in a similar manner as above after observing that
\begin{align*}
\frac{\langle n\rangle^{2s-1}}{w_Z(n,L_1)w_X(n,L_2)}\chi_{L_1\sim L_2} + \frac{\langle n\rangle^{2s-1}}{w_X(n,L_2)A^{1/2-}}\chi_{|n|\gg L_1\sim A}&\lesssim 1.
\end{align*}
We note that the second portion of the norm in the statement of the lemma occurs due to $A\sim L_2\gtrsim |n|\gg L_1$, which results in that portion with no work.\\

As for the $X^{2s-1+, -1/2+}$ norm, we note that:
\[
\frac{L_2^{1/2-}}{w_Z(n,L_1)}\chi_{L_1\gtrsim L_2} + \frac{L_2^{1/2-}}{A^{1/2-}}\chi_{L_2\gg L_1} \lesssim 1,
\]
and hence we trivially obtain the final claim.
\end{proof}

    We actually gain a lot more in the $L^2$ portion. One should actually have the space
    \[
\tn{u_L} = \left\|\langle n\rangle^{2s-1+}\left(L^{-\frac12+}\chi_{L\ll |n|} + L^{-\frac23+}\chi_{L\gtrsim |n|}\right)\tilde{u}_L(\tau,n)\right\|_{L^2_{n,\tau}},
    \]
    however the case that $n_1\sim n_2\sim n_3\sim n$ yields the worst situation in which $|H_3|\sim \langle n\rangle$. In this situation we have $L\sim \langle n\rangle$, making these spaces essentially equivalent for the purpose of estimation.\\

Let $\mathcal{K}$ be such that $\|\mathcal{K}\|_{X} < \infty$, and hence in light of Lemma \ref{le:vkn} we observe
\begin{multline*}
    \left\|\eta(t)\int_0^t W_{t-s}\left(2i\f{\lan{n}^{2s}}{n} v_n \int_0^s  \textnormal{Re}[\overline{v_n}\mathcal{K}_n](y) \, dy + \mathcal{K}\right)\,ds\right\|_{Z}\\
    \lesssim \|v\|_{Z}^2\|\mathcal{K}\|_{X} + \|\mathcal{K}\|_{Z^*}\lesssim (\|v\|_{Z}^2+1)\|\mathcal{K}\|_{X}.
\end{multline*}

Linearity of the integral and the triangle inequality affords us the ability to apply the above display seperately to every term present within $\mathcal{NR}$ (see \eqref{eq:nr}); hence, it suffices to estimate everything in $\|\cdot\|_{X}$. While $\|\cdot\|_{X}$ lacks the triangle inequality, this property allows us to treat $X$ as if it did -- at least for the multilinear operators we are estimating in the Duhamel integral operator. We will, without confusion, write $\|\mathcal{NR}\|_{X}$ to denote the $X$ functional applied to every term in $\mathcal{NR}$.\\

Finally, we remark on the following term in the RHS of \eqref{eq:vn}:
\[ 2i\f{\lan{n}^{2s}}{n} v_n \left(  \text{Re}(\overline{f_n}w_n(0)) + \f{1}{2} \left|w_n(0) \right|^2\right). \]

Note that $w_n (0) = - [T^\ell(f,f)]_n  =: -\left[T^{\ell}(u,u)]_n \right|_{t=0}\in  H^{\f{3}{2} - s- }_x$ from Lemma~\ref{le:tell1}.  For $s< \f{2}{3}$, we have $\f{3}{2} - s > 2s -1$ so that the entire derivative weight, that is $\f{\lan{n}^{2s}}{n}$, can be absorbed into $w_n(0)$.  Hence, the following statement immediately follows without further work:
\[
\n{2i\eta(t)\mathcal{F}_{n}^{-1}\left(\f{\lan{n}^{2s}}{n} v_n \left(  \text{Re}(\overline{f_n}w_n(0)) + \f{1}{2} \left|w_n(0) \right|^2\right)\right)}{Z^{*}} \lesssim \n{v}{Z} (\|f\|_{L^2}\n{u}{Y}+\n{u}{Y}^4),
\]
for $\eta\in\mathcal{C}^\infty_0(\mathbb{R})$.
\subsection{Estimate of the nonlinearlity}

The nonlinearity $\mathcal{NR}$ is quite extensive and consists of many parts. More specifically, its parts are listed in \eqref{eq:nr}.  For this section we assume that we have the system of equations given by $u = h + v$ together with the integral equation derived from \eqref{eq:vn}. This will allow us to substitute terms for $u$, but means that our fixed point equation will need to have this substitution reflected. We ignore this minor detail in favor of increased readability.\\

We begin with the third term on this list. 

\begin{lemma}
Let $s < \f{2}{3}$ and $u\in Y$, then $\mathcal{N}^h$ of Equation \ref{definition: Nh} satisfies
\[
\n{\mathcal{N}^h(u,u)}{X^{2s-1+,-\frac12+}}\lesssim \n{u}{Y}^2. 
\]
\end{lemma}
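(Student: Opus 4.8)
The plan is to estimate $\mathcal{N}^h(u,u)$ by exploiting the resonance identity $H_2 = nn_1n_2$ together with the defining property of the $\chi$-cutoff: on the support of $1-\chi$ applied to an entry, the modulation of that entry is $\gg |H_2| \sim |nn_1n_2|$. First I would record the symbol bound: since $\mathcal{N}$ has spatial symbol $\frac{n\langle n_1\rangle^s\langle n_2\rangle^s}{\langle n\rangle^s}$, after extracting the $\langle n\rangle^{2s-1+}$ weight from the output norm and using $|n|\le |n_1|+|n_2|$, one gets a symbol of size $\lesssim \langle n\rangle^{3s-2+}\langle n_{\max}\rangle\langle n_{\min}\rangle^s \lesssim \langle n_{\max}\rangle^{3s-1+}\langle n_{\min}\rangle^s$ on the (generic, nonresonant) branch, which for $s<\tfrac23$ is a genuine gain. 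Next I would use the decomposition $\mathcal{N}^h = \mathcal{N}^{h\ell} + \mathcal{N}^{\ell h} + \mathcal{N}^{hh}$ from \eqref{eq:ell}; by symmetry of the two entries of $\mathcal{N}$ it is enough to treat $\mathcal{N}^{h\ell}$ (one high entry, one arbitrary/low) and $\mathcal{N}^{hh}$.

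For the main branch, say $\mathcal{N}^{h\ell}(u,u)$, the entry carrying $1-\chi$ has modulation $L_1 \gg |nn_1n_2|$. I would split into the two regions in the $X^{2s-1+,-1/2+}$ output weight: $L \ll |n|$ and $L \gtrsim |n|$, where $L$ is the output modulation. Because the resonance relation gives $L_{\max}\gtrsim |nn_1n_2|$ and here $L_1 = L_{\max}$, the weight $L_1^{-1/3+}$ available from $u\in Y\hookrightarrow X^{0,1/3+}$ on that entry supplies $|nn_1n_2|^{-1/3+}$, which together with the symbol bound should more than cancel all positive powers when $s<\tfrac23$; the remaining factors are estimated by $L^4_{t,x}$ (via Lemma~\ref{le:l4}) on the two low-modulation-localized pieces, after summing the Littlewood–Paley and modulation pieces with the usual dyadic $\ell^2$ bookkeeping. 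Concretely I would write the output $L^2_{\tau,n}$ norm as an $L^2_{t,x}$ norm of a product, use Hölder to split into $L^4_{t,x}\cdot L^4_{t,x}$, put the factor $L_1^{1/3+}u$ (after removing its weight) into $X^{0,1/3+}\hookrightarrow L^4_{t,x}$ and the other factor likewise, with the surplus powers of $L_1$ absorbing the symbol. The region $L\gtrsim |n|$ only makes the output weight $L^{-1/3+}$ smaller, so it is strictly easier.

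The term $\mathcal{N}^{hh}(u,u)$ has both entries high, so both $L_1,L_2\gg |nn_1n_2|$; this is the easiest case since we may distribute the resonance gain across both modulation weights, extracting e.g. $L_1^{-1/6+}L_2^{-1/6+}\lesssim |nn_1n_2|^{-1/3+}$ and again estimating via $L^4_{t,x}\times L^4_{t,x}$. The only place that needs a moment of care is the output region $L\gtrsim |n|$ in $\mathcal{N}^{h\ell}$ when additionally the \emph{low} entry has small modulation but large frequency, i.e. when the low entry's Bourgain weight is merely $L^{1/2+}\chi_{L\ll|n|}$ — but here $u\in Y$ still controls $L^4_{t,x}$ of $P_N$-pieces with no derivative loss modulo $\varepsilon$, and the symbol gain $\langle n_{\max}\rangle^{3s-1+}$ with $s<2/3$ is negative enough to absorb the $\varepsilon$. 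I expect the main obstacle to be the bookkeeping of the dyadic sums over $N, N_1, N_2, L, L_1, L_2$ rather than any single estimate: one must check that after the resonance gain the residual exponents are strictly negative in \emph{every} frequency configuration (including $n_1\sim n_2 \sim n$, where $|H_2|\sim |n|^3$ is largest and the gain is strongest, and the near-orthogonal case $n_{\min}$ small, where $|H_2|$ is small and one must instead rely on the genuine smallness of one of the summing parameters). Throughout I would suppress the $\varepsilon$-derivative loss needed for the $L^4_{t,x}$ (indeed here $L^2_{t,x}$ after Hölder) embeddings, as the paper's convention permits, since the symbol provides ample room.
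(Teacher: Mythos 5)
There is a real gap in the quantitative core of your argument. In the $h\ell$ (and $hh$) branch you extract only a total modulation gain of $|nn_1n_2|^{-\frac13+}$ (from the single high entry, or split as $L_1^{-1/6+}L_2^{-1/6+}$ over two entries) and then try to close with $L^4_{t,x}\times L^4_{t,x}\times L^2_{t,x}$. This cannot work: with the output weight included the symbol is $\langle n\rangle^{2s-1+}\frac{|n|\langle n_1\rangle^s\langle n_2\rangle^s}{\langle n\rangle^{s}}\sim \langle n\rangle^{s+}\langle n_1\rangle^{s}\langle n_2\rangle^{s}$, so after dividing by $|nn_1n_2|^{\frac13-}$ you are left with roughly $\langle n\rangle^{s-\frac13+}\langle n_1\rangle^{s-\frac13+}\langle n_2\rangle^{s-\frac13+}$, which has positive powers of all three frequencies throughout $\frac12\le s<\frac23$, and a H\"older estimate with no residual derivative gain cannot absorb them. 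Moreover the weight is double-counted: the $Y$ norm of the high-modulation entry supplies exactly $L_1^{1/3+}\langle n_1\rangle^{1/3-}$, so you may spend $L_1^{1/3+}$ either on the $|nn_1n_2|^{-\frac13+}$ gain or on placing that factor in $L^4_{t,x}$ via $X^{0,\frac13+}$, but not both (and $X^{0,\frac16+}$ does not embed into $L^4_{t,x}$, so the $hh$ split faces the same obstruction). Two smaller issues: $L_1\gg|nn_1n_2|$ does not give $L_1=L_{\max}$, and the $L\ll|n|$ versus $L\gtrsim|n|$ splitting of the output weight that you invoke belongs to the $\tilde X$ norm, not to $X^{2s-1+,-\frac12+}$, whose weight is simply $\langle n\rangle^{2s-1+}L^{-\frac12+}$.

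The missing idea, which is the heart of the paper's proof, is that the resonance identity upgrades your hypothesis: since the signed modulations differ from $3nn_1n_2$ only by the negligible $\phi$'s, if one entry has modulation $\gg|nn_1n_2|$ then a \emph{second} element of $\{L,L_1,L_2\}$ must also be $\gg|nn_1n_2|$. The paper uses both large modulations: either the full $Y$ weights of both inputs, $(L_1L_2)^{1/3+}\langle n_1\rangle^{1/3-}\langle n_2\rangle^{1/3-}\gtrsim\langle n_{\max}\rangle^{5/3-}\langle n_{\min}\rangle$, or $L^{\frac12-}$ harvested from the output weight together with one input's full $Y$ weight. Against this the weighted symbol is $\lesssim\langle n_{\max}\rangle^{3s-\frac{13}{6}+}\langle n_{\min}\rangle^{-\frac12-}$, i.e. bounded with a genuine $-\frac12-$ smoothing on the minimal frequency whenever $s<\frac23$; the estimate then closes not through $L^4$ but through the elementary trilinear bound $\left|\int(\nab^{-\frac12-}f_1)\,f_2\,f_3\,dt\,dx\right|\lesssim\|f_1\|_{L^2_{t,x}}\|f_2\|_{L^2_{t,x}}\|f_3\|_{L^\infty_tL^2_x}$, the $L^\infty_tL^2_x$ factor being supplied by the $\ell^2_nL^1_\tau$ part of the $Y$ norm. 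Without exploiting this second large modulation (equivalently, the $L^{\frac12-}$ of the output when $L$ is the second large one), the numerology fails for every $s\ge\frac12$, so your proposal as written does not prove the lemma.
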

\begin{proof}
We denote the external and internal frequencies by $(\tau,n)$, $(\tau_j, n_j)$ for $j=1,2$.  Also, we localize corresponding functions so that
\[
L \sim \lan{\tau - L_n}, \qquad L_j \sim \lan{\tau_j - L_{n_j}}
\]

We know that at least two elements of $\{L, L_1, L_2\}$ must be $\gg |n n_1 n_2|$. We remark that whenever $L_i\gtrsim |n|$ for $i= 1,2$ we find $J^{\frac13-}u\in X^{0,\frac13+}$. Thus, we must either have
\begin{align*}
    (L_1L_2)^{\frac13+}\langle n_{1}\rangle^{\frac13-}\langle n_{2}\rangle^{\frac13-}&\gtrsim \langle n\rangle^{\frac23+}\langle n_{1}\rangle \langle n_{2}\rangle\\
    &\gtrsim \langle n_{\max}\rangle^{\frac53-}\langle n_{\min}\rangle\qquad \mbox{or}\\
    L^{\frac12-}L_{\max}^{\frac13+}\langle n_{\min}\rangle^{\frac13-}&\gtrsim \langle n_{\max}\rangle^{\frac53-}\langle n_{\min}\rangle^{\frac76-}.
\end{align*}
Taking the worse of these two bounds, $\langle n_{\max}\rangle^{\frac53-}\langle n_{\min}\rangle$, we find
\[
\frac{\langle n\rangle^{2s-1}\sigma(\mathcal{N}^h)}{\langle n_{\max}\rangle^{\frac53-}\langle n_{\min}\rangle}\lesssim \frac{\langle n_{\max}\rangle^{2s}\langle n_{\min}\rangle^{s}}{\langle n_{\max}\rangle^{\frac53-}\langle n_{\min}\rangle}\lesssim \langle n_{\max}\rangle^{3s-\frac{13}{6}+}\langle n_{\min}\rangle^{-\frac12-}.
\]
It follows that
\begin{multline*}
\langle n\rangle^{2s-1}\sigma(\mathcal{N}^h)\\
\lesssim \langle n_{\min}\rangle^{-\frac12-}\left((L_1L_2)^{\frac13+}\langle n_{1}\rangle^{\frac13-}\langle n_{2}\rangle^{\frac13-} + L^{\frac12-}\max(L_{1}^{\frac13+}\langle n_{1}\rangle^{\frac13-},L_{2}^{\frac13+}\langle n_{2}\rangle^{\frac13-})\right),
\end{multline*}
and we may close by duality and the observation that for any permutation $\sigma$ of $\{1,2,3\}$:
\[
\left|\int_{\mathbb{R}\times\mathbb{T}}\big(\nab^{-\frac12-}f_{\sigma(1)}\big)f_{\sigma(2)}f_{\sigma(3)}\,dtdx\right|\lesssim \|f_{1}\|_{L^2_{x,t}}\|f_{2}\|_{L^2_{x,t}}\|f_{3}\|_{L^\infty_tL^2_x}.
\]
\end{proof}
Next, we estimate $\mathcal{M}$ in the $X$ functional.

\begin{lemma}\label{le:m}
Let $s < \f{2}{3}$, $u \in Y$, $v\in Z$, $h = T^{\ell}(u,u)$ as in Equation \ref{Definition: Tl definition}, and $u = h+v$. Then $\mathcal{M}$, defined by the nonresonant portion of 
\[
\mathcal{M}(u,u,u) = T^\ell(\mathcal{N}(u,u),u)
\]
as in Equation \ref{Definition: M definition}, satisfies
\[
\n{\mathcal{M}(u,u,u)}{X}\lesssim \|u\|_{Y}^3 + \|u\|_{Y}^3\|v\|_{Z} + \|u\|_{Y}^4
\]
\end{lemma}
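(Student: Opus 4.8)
The plan is a dyadic Littlewood--Paley and modulation decomposition of the trilinear form followed by a case analysis. Write $n=n_1+n_2+n_3$ and use $(\tau,n)$, $(\tau_j,n_j)$ for the output and input space--time frequencies, with $L\sim\langle\tau-L_n\rangle$ and $L_j\sim\langle\tau_j-L_{n_j}\rangle$ the corresponding modulations. Up to harmless constants and the $\chi$-cutoffs, the multiplier of $\mathcal{M}$ is
\[
\sigma(\mathcal{M})\sim\frac{\langle n_1\rangle^s\langle n_2\rangle^s\langle n_3\rangle^s}{\langle n\rangle^s\,|n_3|},
\]
supported where $(n_1+n_2)(n_2+n_3)(n_3+n_1)\ne0$ and where both $\chi$-factors of \eqref{eq:chi} are active. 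I would use two algebraic inputs. First, the trilinear resonance identity $(\tau-n^3)-\sum_{j=1}^3(\tau_j-n_j^3)=-3(n_1+n_2)(n_2+n_3)(n_3+n_1)$, so that, absorbing the lower-order $\phi$-corrections, $L_{\max}:=\max(L,L_1,L_2,L_3)\gtrsim|(n_1+n_2)(n_2+n_3)(n_3+n_1)|$. Second, $n^3=(n_1+n_2)^3+n_3^3+3n(n_1+n_2)n_3$, which shows that the $\chi$-factors pin the intermediate modulation $\langle\tau_1+\tau_2-(n_1+n_2)^3\rangle$, the modulation $L_3$, and hence also $L$, all to be $\lesssim|n(n_1+n_2)n_3|$. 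Since the frequencies lie on the integer lattice and the non-resonance condition forces $|n_1+n_2|,|n_2+n_3|,|n_3+n_1|\geq1$, no small denominators occur.

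Then I would split into cases according to the relative sizes of $\langle n_{\max}\rangle$ and $\langle n_{\min}\rangle$, which of $|n_1+n_2|,|n_2+n_3|,|n_3+n_1|$ is smallest, and which modulation realizes $L_{\max}$. In each case the goal is a pointwise multiplier bound of the form
\[
\langle n\rangle^{2s-1+}\,w_{\mathrm{out}}(n,L)\,\sigma(\mathcal{M})\;\lesssim\;\langle n_{\min}\rangle^{-\frac12-}\,\prod_{j=1}^3 w_\bullet(n_j,L_j),
\]
where $w_{\mathrm{out}}$ is the output weight of whichever of $\|\cdot\|_{\tilde X}$ or $\|\cdot\|_{X^{2s-1+,-\frac12+}}$ is more favorable in that regime --- this is the point of taking $\|\cdot\|_X$ to be the minimum of the two: one uses the sharp $L^{-\frac12+}$ weight when the output modulation is large and the $L^{-\frac13+}$ weight of $\tilde X$ (plus its $\ell^2_nL^1_\tau$ tail) when it is small --- and where $w_\bullet$ equals $w_Z$ on the single slot carrying the bulk of the extracted modulation and $w_Y$ on the other two, each remaining weight dominating the corresponding $L^4$ or $L^6$ embedding weight up to the standard $\varepsilon$-derivative loss recoverable from the multiplier. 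Such a bound is closed by duality and H\"older, placing the inputs into $L^\infty_tL^2_x$, $L^4_{t,x}$, $L^6_{t,x}$ ($\nab^{0-}u$), or $L^\infty_{t,x}$ ($\nab^{-\frac12-}u$) via Lemmas~\ref{le:l4},~\ref{le:l6} and Sobolev embedding, with the spare $\langle n_{\min}\rangle^{-\frac12-}$ used to sum the dyadic pieces; the $\ell^2_nL^1_\tau$ part of $\|\cdot\|_{\tilde X}$ is handled identically with Young's inequality replacing one H\"older factor.

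The main obstacle is the near-resonant regime $n_1\sim n_2\sim n_3\sim n$ with $|(n_1+n_2)(n_2+n_3)(n_3+n_1)|\sim\langle n\rangle$, where all inputs and the output sit at the same dyadic modulation $\sim\langle n\rangle$: there $\sigma(\mathcal{M})\sim\langle n\rangle^{2s-1}$, and one must still extract a further $\langle n\rangle^{2s-1+}$ of smoothing against input weights that are merely $L^{2/3+}\sim\langle n\rangle^{2/3+}$ in the high-modulation range. This is exactly why the spaces $Z$, $\tilde X$, $X^{s,b}$ carry the $\langle n\rangle^{2s-1+}$ prefactor and the $L^{2/3+}/L^{-1/3+}$ high-modulation weights, and why the $\ell^2$-decoupling $L^6_{t,x}$ embedding of Lemma~\ref{le:l6} (rather than the cheaper $L^4$ argument) must be used here; this case is what necessitates $s<\frac23$.

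Finally, the substitution $u=h+v$ with $h=T^\ell(u,u)$ is accounted for by expanding $\mathcal{M}(h+v,h+v,h+v)$ and using Lemma~\ref{le:tell1} (which gives $h\in Y$ with $\|h\|_Y\lesssim\|u\|_Y^2$ together with the extra smoothing $h\in C^0_tH^{\frac32-s-}_x\cap X^{\frac13-,\frac13+}$) and the inclusion $Z\hookrightarrow Y$. The three terms on the right-hand side of the claim arise, respectively, from the interactions that close with a $Y$-norm on each slot, those forced to place a $Z$-weight on a $v$-slot while using an $h$-factor elsewhere, and those using only an $h$-factor's extra smoothing; the resulting bookkeeping is routine once the pointwise multiplier bounds above are in hand.
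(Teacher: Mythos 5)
There is a genuine gap. Your scheme asks for a uniform pointwise bound $\langle n\rangle^{2s-1+}w_{\mathrm{out}}\,\sigma(\mathcal{M})\lesssim \langle n_{\min}\rangle^{-\frac12-}\prod_j w_\bullet(n_j,L_j)$ with ``$w_Z$ on the slot carrying the bulk of the modulation and $w_Y$ on the other two,'' and you declare the $u=h+v$ bookkeeping routine. But the inputs of $\mathcal{M}(u,u,u)$ are only in $Y$, so generically you may only charge $w_Y(n_j,L_j)\sim \langle n_j\rangle^{1/3-}L_j^{1/3+}$ to each slot, and in the actual worst regime this bound is false. That regime is not the one you single out ($n_1\sim n_2\sim n_3\sim n$, which the paper disposes of directly from \eqref{eq:sym1}--\eqref{eq:sym2}), but rather $|n_1|\sim|n_2|\gg|n_3|$ with $|n_1+n_2|\ll|n_1|$, so that $n$ is small and $\langle H_3\rangle\sim\langle n_1\rangle^2\langle n_1+n_2\rangle$. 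There the modulation restriction you correctly recorded ($L,\wt{L_1},L_3\lesssim |n(n_1+n_2)n_3|$) is used in the paper to \emph{rule out} $L\sim L_{\max}$ and $L_3\sim L_{\max}$ (otherwise $\langle n_1\rangle^2\lesssim\langle n\rangle\langle n_3\rangle$, a contradiction), forcing $L_{\max}\in\{L_1,L_2\}$; you never draw this conclusion or confront what happens then. With, say, $L_1=L_{\max}\gtrsim\langle n_1\rangle^2\langle n_1+n_2\rangle$ and only the $Y$-weight available on slot $1$, the leftover factor is of size $\langle n_1\rangle^{2s-1}$ (with $n,n_3,n_1+n_2$ all possibly $O(1)$), which is unbounded for $s>\frac12$ --- precisely the threshold the lemma is meant to beat.

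The paper's resolution, which your proposal does not contain, is structural: in that case one substitutes $u=v+h$ \emph{in the first slot only}; the $v$-part is saved because $\chi_{L\gtrsim n}v\in X^{0,\frac23+}$ supplies the stronger weight $L_1^{\frac23-}$ (bounded symbol for $s<\frac23$), while for the $h$-part the trilinear estimate genuinely fails and one must expand $h=T^\ell(u,u)$ and prove a \emph{quadrilinear} estimate for $T^\ell(\mathcal{N}(T^\ell(u,u),u),u)$. Inside that estimate the sub-case $n_1+n_3=0$ has symbol $\sim\langle N\rangle^{2s-1}$ with no dispersive gain ($H_4=n_2n_4(n_2+n_4)$), and it is only controlled through a symmetrization cancellation: averaging over $N\mapsto -N$ produces the difference $\chi(n_2,\tau_2;N)-\chi(n_2,\tau_2;-N)$, whose support forces $L_2$ or $L_3\gtrsim N^2|n_2|$. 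The extra smoothing of $h$ from Lemma~\ref{le:tell1} that you invoke ($C^0_tH^{\frac32-s-}_x\cap X^{\frac13-,\frac13+}$) is insufficient here: charging $\langle n_1\rangle^{\frac32-s}$ to slot $1$ still leaves $\langle n_1\rangle^{3s-\frac32}$, unbounded for $s>\frac12$. Without the elimination of $L,L_3$ as maximal modulations, the selective $v$/$h$ splitting, and the quadrilinear cancellation, the proof does not close in the range $\frac12<s<\frac23$.
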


Before proceeding with the proof of this lemma, which will follow by case work, we make some preliminary observations and reductions. Observe that the symbol, $\sigma(\mathcal{M})$, satisfies
\begin{equation}\label{eq:sym3}
\si(\mathcal{M}) = \f{ \lan{n_1}^s \lan{n_2}^s \lan{n_3}^s}{in_3\lan{n}^{s}  },
\end{equation}
and that it excludes the trilinear resonant interaction $H_3 := (n_1+ n_2) (n_2 + n_3) (n_3 + n_1) = 0$.  We now denote       
\begin{align*}
    L&= \langle \tau-n^3-\phi_n\rangle   \\
    L_i&= \langle \tau_i-n_i^3-\phi_{n_i}\rangle \quad \text{ for } i = 1,2,3  \\
    \wt{L_1}&= \langle \tau_1+\tau_2 -(n_1+n_2)^3-\phi_{n_1+n_2}\rangle,
\end{align*}
and remark that, in terms of modulation restriction, $\wt{L_1}$ and $L_3$ are limited to low modulation:
\begin{equation}\label{eq:modres}
\max\{L, \wt{L_1}, L_3\} \sim |n(n_1+n_2) n_3|.
\end{equation}

We now decompose $\mathbb{Z}^3$ into three separate regions, given below by Cases 1-3:\\

\textbf{Case 1}: $|n_{1}| \gg \max\{|n_{2}|, |n_3|\}$\\
\textbf{Case 2}: $|n_1| \sim |n_2| \sim |n_3|$\\
\textbf{Case 3}: $|n_1| \sim |n_2| \gg |n_3|$.\\

We note that these cases are not exhaustive.  However, all remaining frequency interactions are easier and hence are omitted for brevity.  For instance, we could also have $|n_1| \sim |n_3| \gg |n_2|$, but considering that $\si(\cm)$ has a full factor of $|n_3|$ in the denominator, we can see that this estimate will be much easier than its counterpart stated in Case 3.\\

We denote $L_{\max} = \max \{L_1, L_2, L_3, L\}$, so that we must have 
\[
L_{\max} \gtrsim \langle H_3\rangle  + |\phi_n-\phi_{n_1}-\phi_{n_2}-\phi_{n_3}|\gtrsim \langle H_3\rangle,
\]
as $\langle H_3\rangle\gtrsim \max(|n_1|,|n_2|,|n_3|)\gg \max(|n_1|,|n_2|,|n_3|)^{2s-1}$ for $s < 1$. We now perform an additional decomposition by defining the following subcases:\\

\textbf{Subcase A}: $L\sim L_{\max}$\\  
\textbf{Subcase B}: $L_j \sim L_{\max}$ for some $j=1,2,3$.\\

In Subcase A, $L \gtrsim H_3$, means that the external modulation $L$ and frequency $n$ satisfy $L\gtrsim |n|$.  Here, we utilize the $\tilde{X}$ portion of $X$ and only need to estimate the high modulation portion of $\tilde{X}$: 
\[
\|L^{-\frac13+}\chi_{L\gtrsim |n|}\wt{\mathcal{M}}(\tau,n)\|_{\ell^2_nL^2_\tau} + \|L^{-1}\langle n\rangle^{2s-1}\chi_{L\gtrsim |n|}\wt{\mathcal{M}}(\tau,n)\|_{\ell^2_nL^1_\tau}
\]
Hence, for Subcase A, the first thing we must do is show the symbol bound:
\begin{equation}\label{eq:sym1}
\f{\si(\mathcal{M})}{L^{\f{1}{3}-}} \lesssim \f{ \lan{n_1}^s \lan{n_2}^s }{\lan{n}^{s} \lan{n_3}^{1-s}\langle H_3\rangle^{\f{1}{3}-} }\lesssim \prod_{j=1}^3\langle n_j\rangle^{0-}.
\end{equation}

Once this is shown, Lemma~\ref{le:l6} and H\"older's inequality implies for $\tilde{\nu}(\tau,n) = \langle n\rangle^{0-}|\tilde{u}(\tau,n)|$: 
\[
\|\mathcal{M}(u,u,u)\|_{X^{0,-1/3+}}\lesssim \n{\nu^3}{L^2_{t,x}} \lesssim \n{\nu}{L^6_{t,x}}^3 \lesssim \n{u}{Y}^3,
\]
which is enough to obtain the desired estimate. The second portion we must show is:
\begin{equation}\label{eq:sym12}
\f{\langle n\rangle^{2s-1+}\si(\mathcal{M})}{L} \lesssim \f{ \lan{n_1}^s \lan{n_2}^s }{\lan{n}^{s} \lan{n_3}^{1-s}\langle H_3\rangle}\lesssim \frac{1}{\langle n_i+n_j \rangle^{1+}},
\end{equation}
for some $i,j\in \{1,2,3\}$. With this, we obtain:
\begin{multline*}
\|L^{-1}\langle n\rangle^{2s-1+}\chi_{L\gtrsim |n|}\wt{\mathcal{M}}(\tau,n)\|_{\ell^2_nL^1_\tau}\lesssim \left\|\tilde{u}*\wt{\left(\nab^{-1-} u^2\right)}\right\|_{\ell^2_nL^1_\tau}\\
\lesssim \|u\|_{Y}\sum_{m}\frac{1}{\langle m\rangle^{1+}}\sum_{m=m_1+m_2}\|\tilde{u}(\tau,m_1)\|_{L^1_\tau}*_n\|\tilde{u}(\tau,m_2)\|_{L^1_\tau},
\lesssim \|u\|_{Y}^3
\end{multline*}
which is the desired estimate.\\

In Subcase B,  we have either $L \ll |n|$ or $L\gtrsim |n|$.  For this case we utilize the second portion of the $X$ functional, $X^{2s-1+, -1/2+}$. Specifically, since we seek well-posedness for $s < \frac23$ we make the further reduction that we consider only the $X^{\frac13, -\frac12+}$ norm.\\

Since we assume $L_j \gtrsim \langle H_3\rangle \gtrsim |n_j|$ for some $j=1,2,3$, we gain $\lan{n_j}^{-\f{1}{3}+} L_j^{-\f{1}{3}-}$ for the symbol estimate.   Consider the symbol estimate:
\begin{equation}\label{eq:sym2}
\f{\lan{n}^{\f{1}{3}} \si(\mathcal{M})}{\lan{n_j}^{\f{1}{3}-} L_j^{\f{1}{3}-}} \lesssim \f{\lan{n_1}^s \lan{n_2}^s }{\lan{n}^{1-s}\lan{n_3}^{1-s} \lan{n_j}^{\f{1}{3}-} \langle H_3\rangle^{\f{1}{3}-} }\lesssim L^{0-}\langle n\rangle^{0-}\prod_{j=1}^3\langle n_j\rangle^{0-}.
\end{equation}
If this symbol bound is shown, then for 
\[
\tilde{\nu}(\tau,n) = \langle n\rangle^{0-}|\tilde{u}(\tau,n)|,\qquad \mbox{and}\qquad \tilde{\eta}(\tau,n) = \langle n\rangle^{\frac13-}L^{\frac13+}|\tilde{u}(\tau, n)|,
\]
we find the reduction:
\[
\|\mathcal{M}(u,u,u)\|_{X^{\frac13,-\frac12+}}\lesssim \left\| \nu^2 \eta \right\|_{X^{0-,-\frac12-}};
\]
which is easily bound using duality and an $L^6_{x,t}L^2_{x,t}L^6_{x,t}L^6_{x,t}$ estimate.\\

Note that the extra $L^{0-}$ in the required bound \eqref{eq:sym2} and the extra $\langle n_j\rangle^{0-}$ factors in both bounds can easily be obtained by borrowing from other modulation factors and noting that we will always have space in the spatial derivatives due to $s < \frac13$. Hence, with this understood, we simply show that the symbols are bounded by $1$.\\

\begin{remark}
    In Case 3, we will show that we must live in Subcase B. For a portion of this subcase we will use the relationship $u = h + v$, but we will eventually end up closing our argument using a fixed point as in Section \ref{Section: Proof Thm1}. Because of this, we technically have the spatial frequency dependent decomposition of $\mathcal{M}$ given by 
    \[
\mathcal{M} = \mathcal{M}^{(1,2)} + \mathcal{M}^{(3)},
    \]
    where the superscripts denote Cases 1,2,3. Our full equation will then have
    \[
\mathcal{M}^{(1,2)}(u,u,u) + \mathcal{M}^{(3)}(h+v, u, u)
    \]
    in place of $\mathcal{M}$. We omit this technicality for readability.
\end{remark}
\begin{proof}[Proof of Lemma \ref{le:m}]
With the above observations in place, we proceed with the proof of Lemma \ref{le:m}. This proof follows by the cases dictated above, and we estimate each case separately.\\
\textbf{Case 1:} $|n_{1}| \gg \max\{|n_2|, |n_3|\}$\\

Here, we must have $|n|\sim |n_1|$ and
\[
|H_3| \sim |(n_1+n_2)(n_2+n_3)(n_3+n_1)| \sim \lan{n_1}^2 \langle n_2+n_3\rangle
\]

\textbf{Case 1A:} $L\sim L_{\max}$\\

By \eqref{eq:sym1}, 
\[
\f{\si(\mathcal{M})}{L^{\f{1}{3}-}} \sim \f{ \lan{n_1}^s \lan{n_2}^s}{\lan{n}^{s}  \lan{n_3}^{1-s}\langle H_3\rangle ^{\f{1}{3}-} }\lesssim \f{ \lan{n_2}^s }{\lan{n_1}^{\f{2}{3}}\lan{n_3}^{1-s} \langle n_2+n_3\rangle^{\f{1}{3}-} }
\]
which is bounded if $s< 1$. Similarly, \eqref{eq:sym12} becomes:
\[
\frac{\langle n\rangle^{2s-1+}\sigma(\mathcal{M})}{L}\lesssim \frac{\langle n\rangle^{2s-1+}\langle n_1\rangle^{s}\langle n_2\rangle^{s}}{\langle n\rangle^{s}\langle n_3\rangle^{1-s}\langle H_3\rangle}\lesssim \frac{\langle n_2\rangle^{3s-3+}}{\langle n_3\rangle^{1-s}\langle n_2+n_3\rangle^{1+}},
\]
which will be bounded for $s < 1$.\\ 

\textbf{Case 1B:} $L_j \sim L_{\max}$ for some $j=1,2,3$\\

By \eqref{eq:sym2},
\[
\f{\lan{n}^{\f{1}{3}}\si(\mathcal{M})}{\lan{n_j}^{\f{1}{3}-} L_j^{\f{1}{3}+} } \sim \f{ \lan{n_1}^{s} \lan{n_2}^{s}  }{\lan{n}^{s - \f{1}{3}} \lan{n_3}^{1-s} \lan{n_j}^{\f{1}{3}-} \langle H_3\rangle ^{\f{1}{3}+} } \lesssim \f{ \lan{n_2}^{s} }{\lan{n_1}^{\f{1}{3}} \lan{n_3}^{1-s} \langle n_2+n_3\rangle^{\f{1}{3}+} }
\]
Which is bounded if $s < \f{2}{3}$, even whilst ignoring the extra gain in $\lan{n_j}^{-\f{1}{3}+}$. \\

\textbf{Case 2:} $|n_1| \sim |n_2| \sim |n_3|$\\

This is when $H_3$ can be small.  We can safely ignore the case when $|n|\ll |n_1|$, since this will imply $|H_3| \sim \lan{n_1}^3$ which gives a lot of gain from modulation.  Hence, we can assume $|n|\sim |n_1| \sim |n_2| \sim |n_3|$.\\

The worst case here is when only one factor of $H_3$ is large.  Without loss of generality, we assume that $\lan{n_1+n_2}\sim \lan{n}$ so that
\[
|H_3| \sim \lan{n} \langle n_2+n_3\rangle \langle n_3+n_1\rangle
\]

\textbf{Case 2A:} $L \sim L_{\max}$.\\

By \eqref{eq:sym1},
\[
\f{\si(\mathcal{M})}{L^{\f{1}{3}-} } \sim \f{ \lan{n}^{2s-\f{4}{3}+} }{ \langle n_1+n_2\rangle^{\f{1}{3}-} \langle n_2+n_3\rangle^{\f{1}{3}-} } 
\]
This is bounded for $s<\f{2}{3}$. Additionally, the \eqref{eq:sym12} bound becomes:
\[
\frac{\langle n\rangle^{2s-1+}\sigma(\mathcal{M})}{L}\lesssim \frac{\langle n\rangle^{s-1}\langle n_1\rangle^{s}\langle n_2\rangle^{s}\langle n_3\rangle^{s-1}}{\langle H_3\rangle }\lesssim \frac{\langle n\rangle^{4s-3+}}{\langle n_2+n_3\rangle^{1+}},
\]
which will be bounded for $s < \frac23$.\\

\textbf{Case 2B:} $L_j \sim L_{\max}$ for some $j=1,2,3$.\\

By \eqref{eq:sym2}, we have the chain of inequalities:
\[
\f{\lan{n}^{\f{1}{3}}\si(\mathcal{M})}{\lan{n_j}^{\f{1}{3}- }L_{j}^{\f{1}{3}-} } \sim \f{\lan{n}^{2s-1 +} }{ \langle H_3\rangle ^{\f{1}{3}-} }\lesssim \f{\lan{n}^{2s -\f{4}{3}+}}{ \langle n_1+n_2\rangle^{\f{1}{3}-} \langle n_2+n_3\rangle^{\f{1}{3}-} } 
\]
which is bounded for $s< \f{2}{3}$.\\

\textbf{Case 3:} $|n_1| \sim |n_2| \gg |n_3|$\\

In this main case, we have
\[
|H_3| = |(n_1+n_2)(n_2+n_3)(n_3+n_1)| \sim \lan{n_1}^2 \langle n_1+n_2\rangle
\]
We can assume that $|n_1+n_2| \ll |n_1|$ (which means $|n|\ll |n_1|$),  since otherwise we would get sufficient gain from $H_3$.  In Case 3, it is very important to rule out two possibilities: that is, $L\sim L_{\max}$ and $L_3\sim L_{\max}$.  In fact, these trilinear estimates will fail due to unbounded symbols, and this is the main reason for devising modulation restriction on the normal form $T^\ell$.  To see this, we denote the following:
    \begin{align*}
        L &= \langle \tau - n^3\rangle \\ 
        L_j &= \langle \tau_j - n_j^3\rangle \qquad \text{ for } j = 1,2,3\\  
        \wt{L_1} &= \langle (\tau_1 + \tau_2) - (n_1+n_2)^3 \rangle      
    \end{align*}

Recall that $\cm(u,u,u) = T^{\ell}(\mathcal{N}(u,u),u)$ (minus the resonance), and the modulation restriction from $T^{\ell}$ necessitates 
\[
\max\{ L, \wt{L_1}, L_3 \} \sim |n (n_1 + n_2) n_3|
\]
Hence, if either $L\sim L_{\max}$ or $L_3 \sim L_{\max}$, we must have 
\[
\lan{n_1}^2 \langle n_1+n_2\rangle \sim \langle H_3\rangle  \lesssim L_{\max} \lesssim  \langle n\rangle \langle n_1 + n_2\rangle \langle n_3\rangle
\]
But this implies $\langle n_1\rangle^2 \lesssim \langle n\rangle  \langle n_3\rangle $, which contradicts the frequency assumption for Case 3.  Hence, we must have $\max\{L_1, L_2\} = L_{\max} \gg \max\{L, L_3\}$.  By symmetry, we will assume that $L_1 = L_{\max} \sim \langle H_3\rangle $, which places us in \textbf{Case 3B}.\\

Here, we will need to use the decomposition $u = v+h$ for the first entry of  $\cm(\cdot, \cdot, \cdot)$.  Recall that $\chi_{L\gtrsim n} v \in X^{0,\f{2}{3}+}$ and $h= T^\ell(u,u)$.  For the term where the first entry is $v$, the symbol estimate is obtained by replacing the denominator on the LHS of \eqref{eq:sym2} by $L_1^{\f{2}{3}-}$, which yields:
\[
\f{\lan{n}^{\f{1}{3}} \si(\mathcal{M})}{ L_1^{\f{2}{3}-}} \lesssim \f{\lan{n_1}^{2s}  }{\lan{n}^{s-\f{1}{3}}\lan{n_3}^{1-s} \langle H_3\rangle^{\f{2}{3}-}  } \lesssim \f{\lan{n_1}^{2s-\f{4}{3}+} }{\lan{n}^{s-\f{1}{3}}\lan{n_3}^{1-s} \lan{n_1+n_2}^{\f{2}{3}-}  }
\]
which is bounded for $s < \f{2}{3}$. \\

Finally, we need to consider the case when the first entry is $h$.  Here, the trilinear estimate fails due to the unbounded symbol.  To overcome this issue, we write $h = T^\ell(u,u)$ and denote  $\mathcal{M}_1 :=\mathcal{M}(h,u,u) = T^{\ell} (\cn (T^{\ell}(u,u), u), u)$.  We will transition to a quadrilinear estimate for this case.  Note

\begin{align*}
\si(\cm_1) &= \f{ \lan{n_1}^s \lan{n_2}^s\lan{n_3}^s \lan{n_4}^s}{i n_1 n_2 n_4 \lan{n}^s}\\
&\times \chi(n_1+n_2+n_3, \tau_1+\tau_2+\tau_3; n_4) \chi(n_4, \tau_4; n_1+n_2+n_3) \chi (n_1,\tau_1; n_2)\chi(n_2,\tau_3; n_1)
\end{align*}
with the frequency restriction $\lan{n_1 + n_2} \sim |n_3| \gg |n_4|$.   The modulation restriction is included because we need this to establish cancellation via symmetry in one special case where $n_1 + n_3 =0$.  We divide this estimate into three sub-subcases:\\

\textbf{Subsubcase (i):} $|n_1|\sim |n_3| \gg \max\{|n_2|, |n_4|\}$ and $n_1 + n_3 = 0$;\\
\textbf{Subsubcase (ii):} $|n_1| \sim |n_3| \gg \max\{|n_2|, |n_4|\}$ and $n_1+n_3 \neq 0$;\\
\textbf{Subsubcase (iii):} $|n_1| \sim |n_2| \sim |n_3|$.\\

As we are no longer in the trilinear regime, we need to return to the $X^*$ norm.  As in the other Subcase B estimates, we can begin by estimating $\ds \n{\lan{n}^{\f{1}{3}} \lan{n}^{2s-1} \wt{\mathcal{M}_1}}{\ell^2_n L^2_\tau}$.  Note:
\[
\lan{n}^{\f{1}{3}} \si(\mathcal{M}_1) \lesssim \f{\lan{n_3}^s }{\lan{n}^{s-\f{1}{3}} \lan{n_1}^{1-s} \lan{n_2}^{1-s} \lan{n_4}^{1-s}}
\]
It would be insufficient to show that the RHS is bounded by 1, since we need to close a quadrilinear estimate in the absence of $L^8_{t,x}$ embedding.  But, it is sufficient to bound the RHS by $\lan{n_{\min}}^{-\f{1}{2}-}$, which enables us to pull out one factor of $u$ in $L^{\infty}_{t,x}$ and apply Sobolev embedding to place it in $L^\infty_t L^2_x$.  Then, we can use the same argument as the trilinear estimate in Subcase B to bound the remaining terms.  We denote
\begin{align*}
    L&= \langle \tau-n^3-\phi_n\rangle   \\
    L_i&= \langle \tau_i-n_i^3-\phi_{n_i}\rangle \quad \text{ for } i = 1,2,3,4   \\
    L_{\max} &= \max\{L, L_1, L_2, L_3\}\\
\end{align*}
We define $H_4$ to be the dispersion without $\phi_n's:$
\[
H_4 = (n_1+n_2+n_3+n_4)^3 - n_1^3 - n_2^3 - n_3^3 - n_4^3.
\]

\textbf{Case 3B(i):} $|n_1|\sim |n_3| \gg \max\{|n_2|, |n_4|\}$ and $n_1 + n_3 = 0$\\

Under this special case, $n = n_2 + n_4$ and $n_1^3 + n_3^3 =0$, so that $H_4 = (n_2+n_4)^3 - n_2^3 - n_4^3 = n_2 n_4(n_2+n_4)$. Even with the help of an $n_1$ factor in the denominator, the best we can estimate this symbol by is $\lan{n_3}^{2s -1}$, which is unbounded for $s>\f{1}{2}$.  However, there is a cancellation structure for this term.  We write $n_1 = N$ and $n_3 = -N$:
\[
\si(\mathcal{M}_1) = \f{ \lan{N}^{2s} \lan{n_2}^s  \lan{n_4}^s}{iN \lan{n}^s n_2 n_4} \chi(n_2, \tau_1 +\tau_2+\tau_3; n_4) \chi(n_4, \tau_4; n_2)   \chi(N, \tau_1; n_2) \chi(n_2,\tau_2; N)
\]
We want to make the symbol completely symmetric with respect to $(\tau_1, N)$ and $(\tau_3, -N)$.  To this end, we decompose $\cm_1$ as  $\cm_1 = \cm_{11} + \cm_{12}$, where:
\[
\si(\cm_{11}) = \si(\cm_1) \chi(-N, \tau_3; n_2); \qquad  \si(\cm_{12}) = \si(\cm_1) (1- \chi(-N, \tau_3; n_2)).
\]
First, consider $\cm_{11}$:
\begin{align*}
  \cf_{t,x}[\cm_{11}] &(\tau,n)  =  \int \sum \f{ \lan{N}^{2s} \lan{n_2}^s  \lan{n_4}^s}{iN \lan{n}^s n_2 n_4} \chi(n_2, \tau_1 +\tau_2+\tau_3; n_4) \chi(n_4, \tau_4; n_2) \\
                    &\times \chi(N, \tau_1; n_2) \chi(-N,\tau_3; n_2) \chi(n_2,\tau_2; N) \wt{u}(\tau_1, N) \wt{u}(\tau_2,n_2) \wt{u}(\tau_3, -N) \wt{u}(\tau_4, n_4),
\end{align*}
where, with the change variables $N\mapsto -N$ and $\tau_1 \mapsto \tau_3$, we may also write
\begin{align*}
  \cf_{t,x}[\cm_{11}] &(\tau,n)  =  -\int \sum \f{ \lan{N}^{2s} \lan{n_2}^s  \lan{n_4}^s}{N \lan{n}^s n_2 n_4} \chi(n_2, \tau_1 +\tau_2+\tau_3; n_4) \chi(n_4, \tau_4; n_2) \\
                    &\times \chi(-N, \tau_3; n_2) \chi(N,\tau_1; n_2) \chi(n_2,\tau_2; -N) \wt{u}(\tau_3, -N) \wt{u}(\tau_2,n_2) \wt{u}(\tau_1, N) \wt{u}(\tau_4,n_4).
\end{align*}

If we average over these two expressions we obtain:
\begin{multline*}
  \cf_{t,x}[\cm_{11}] (\tau,n)  =  \int \sum \f{ \lan{N}^{2s} \lan{n_2}^s  \lan{n_4}^s}{N \lan{n}^s n_2 n_4} \chi(n_2, \tau_1 +\tau_2+\tau_3; n_4) \chi(n_4, \tau_4; n_2)\\\times\chi(-N, \tau_1; n_2) \chi(N,\tau_3; n_2)
                      \left\{\f{ \chi(n_2,\tau_2; N) - \chi(n_2,\tau_2; -N)}{2}\right\} \wt{u}(\tau_1, -N) \\
                    \times\wt{u}(\tau_2,n_2) \wt{u}(\tau_3, N) \wt{u}(\tau_4,n_4),
\end{multline*}
where the support conditions on $\chi(n_2,\tau_2; N) - \chi(n_2,\tau_2; -N)$ forces at least one of the following conditions to fail:
\[
\lan{\tau_2 - n_2^3 - \phi_{n_2}} \lesssim |(n_2 + N) n_2 N| \quad \text{ or } \quad \lan{\tau_2 - n_2^3 - \phi_{n_2}} \lesssim |(n_2 - N) n_2 N| . 
\]
Since we have assumed $N \gg |n_2|$ in this case, we must have
\[
L_2 := \lan{\tau_2 - n_2^3 - \phi_{n_2}}  \gtrsim  N^2 |n _2|.
\]

Similarly, if we consider $\cm_{12}$, we see again by the support of $1- \chi(-N, \tau_3; n_2)$ that
\[
L_3 := \lan{\tau_3 - n_3^3 -\phi_{n_3} } \gg |(n_2+ n_3) n_2 n_3| \sim N^2 |n_2|.
\]
Thus, in all of the above scenarios we find
\[
\f{\lan{n}^{\f{1}{3}} \si(\mathcal{M}_1)}{\max\{L_2, L_3\}^{\f{1}{3}+}} \lesssim \f{\langle N\rangle^{2s-1} }{\lan{n}^{s-\f{1}{3}}  \lan{n_2}^{\f{4}{3}-s+} \lan{n_4}^{1-s}  \langle N\rangle^{\f{2}{3}+}  } \lesssim \lan{n_{\min}}^{-\f{1}{2}-},
\]
as long as $s \leq \f{5}{6}$.  \\

\textbf{Case 3B(ii):} $|n_1| \sim |n_3| \gg \max\{|n_2|, |n_4|\}$ and $n_1+n_3 \neq 0$ \\

In this case, we write
\[
H_4 = 3 (n_1+ n_2)(n_2+n_3)(n_3+n_1) + 3n_4 (n-n_4)n.
\]
If $\lan{n_1 + n_3}  \sim |n_1| \sim |n_3|$, then the first term of $H_4$ is $O(\lan{n_1}^3)$ whereas the second term contains a factor of $n_4$ and is much smaller.  Hence, $|H_4| \sim \lan{n_1}^3$.  If $\lan{n_1 + n_3}  \ll |n_1| \sim |n_3|$, then $|n| \ll |n_1|$, and the all three factors of $|n_4 (n - n_4) n|$ are very small in comparison to $n_1$, while the first term has at least two large factors comparable to $n_1$.  Hence, $|H_4| \sim \lan{n_1}^2 \langle n_1 + n_3\rangle \neq 0$.  In either of these cases, we can gain from $H_4$ at least $\f{2}{3}+$ derivatives in $n_1$, and the symbol can be easily bounded by $\lan{n_{\min}}^{-\f{1}{2}-}$.  We omit the details.\\

\textbf{Case 3B(iii):} $|n_1| \sim |n_2| \sim |n_3|$\\

In this case,
\[
\lan{n}^{\f{1}{3}} \si(\mathcal{M}_1) \lesssim \f{\lan{n_1}^{3s-2} }{\lan{n}^{s-\f{1}{3}}  \lan{n_4}^{1-s}}\lesssim \lan{n_{\min}}^{-\f{1}{2}-}
\]
even without using any modulation weight.  Hence, this estimate can be shown as before.
\end{proof}

We have two resonant terms left, that is, the first two terms of \eqref{eq:nr}.  We proceed with the first term on the list.
\begin{lemma}\label{le:r}
Let $s<\f{2}{3}$, $v\in Z$, and $h\in Y\cap L^2_tH^{1-}_x$. Then $\mathcal{R}$ of Equation \ref{definition: Mod Rest. Resonant} satisfies
\[
\n{\rr(v + h,v + h, v + h) - \rr (v,v,v)}{X^{2s-1+,-\frac12+}} \lesssim \|h\|_{L^2_tH^{-1}_x}\left(\|h\|_{Y} + \|v\|_{Y}\right)^2.
\]
\end{lemma}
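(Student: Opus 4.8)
The plan is to expand the left‑hand side by trilinearity and estimate the resulting pieces one by one. Recall from \cite{Oh} that $\rr$ is diagonal in the spatial frequency, $\rr_n(u,u,u)=i\lan{n}^{2s}n^{-1}|u_n|^2u_n$, so its symmetrization satisfies $|\rr_n(a,b,c)|\lesssim \lan{n}^{2s}|n|^{-1}|a_n|\,|b_n|\,|c_n|$, with $\wt{\rr(a,b,c)}(\tau,n)$ equal (up to the choice of conjugated slot) to $i\lan{n}^{2s}n^{-1}\bigl(\wt a_n\ast_\tau\wt{\overline b}_n\ast_\tau\wt c_n\bigr)(\tau)$. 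Writing $u=v+h$ and subtracting $\rr(v,v,v)$, the difference is a sum of seven trilinear terms $\rr(a_1,a_2,a_3)$ with $a_i\in\{v,h\}$ and at least one $a_i=h$; by symmetry of $\rr$ these reduce to three types, $\rr(h,v,v)$, $\rr(h,h,v)$ and $\rr(h,h,h)$, which I will bound with one factor of $h$ measured in $L^2_tH^{-1}_x$ and the others in $Y$ (for $\rr(h,h,v)$ and $\rr(h,h,h)$ there is a large surplus, since $h=T^\ell(u,u)$ is far smoother than a generic element of $Y$ by Lemma~\ref{le:tell1}).

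The key structural input is the resonant modulation identity. On the resonant support $(n_1+n_2)(n_2+n_3)(n_3+n_1)=0$ (with $\sum n_j=n$, all $n_j\neq0$, and the singular piece $n_1+n_2=0$ excluded) the cubic dispersion cancels, $n_1^3+n_2^3+n_3^3=n^3$, and, since $f$ is real so that $\phi_{-k}=-\phi_k$, the frequency‑shift corrections cancel as well; hence $\tau-n^3-\phi_n=\pm(\tau_1-n_1^3-\phi_{n_1})\pm(\tau_2-n_2^3-\phi_{n_2})\pm(\tau_3-n_3^3-\phi_{n_3})$ for the appropriate signs. In particular $\lan{\tau-n^3-\phi_n}\lesssim\max_j\lan{\tau_j-n_j^3-\phi_{n_j}}$, so the output modulation weight $\lan{\tau-n^3-\phi_n}^{-\frac12+}$ of the $X^{2s-1+,-\frac12+}$ norm may be discarded, or, when an input modulation is large, traded against it.

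Combining the two, each term reduces (by diagonality and Plancherel) to a convolution estimate in $\tau$ of the schematic form $\n{\lan{n}^{4s-2+}\lan{\tau-n^3-\phi_n}^{-\f12+}\,\wt h_n\ast_\tau\wt v_n\ast_\tau\wt v_n}{\ell^2_nL^2_\tau}$, which is treated by Young's and Hölder's inequalities. One peels off the $h$‑factor either in $L^2_\tau=L^2_t$ (absorbing $\lan{n}^{-1}$ into $\n{h}{L^2_tH^{-1}_x}$) or in $L^1_\tau$, where one invokes the bound $\lan{n}^{\f32-s-}\n{\wt h_n}{L^1_\tau}\in\ell^2_n$ of Lemma~\ref{le:tell1}; the two $v$‑factors are handled through the embedding $Z\hookrightarrow X^{0,\f12+}$ (whence $\n{\wt v_n}{L^1_\tau}\lesssim\n{\lan{\tau-n^3-\phi_n}^{\f12+}\wt v_n}{L^2_\tau}$, with $\ell^2_n$‑summable right‑hand side) together with a dyadic decomposition in which the output gain $\lan{\tau-n^3-\phi_n}^{-\f12+}$ outweighs the loss incurred in passing from $L^2_\tau$‑ to $L^1_\tau$‑norms against the $Z$‑weights. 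The condition $s<\f23$ enters precisely in closing the resulting power of $\lan{n}$.

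The hard part is the single‑$h$ term $\rr(h,v,v)$: the symbol of $\rr$ already costs $2s-1$ derivatives, which, compounded with the $\lan{n}^{2s-1+}$ weight of the target norm, leaves $\lan{n}^{4s-2+}$ worth of spatial derivatives to be recovered from a single factor $h\in L^2_tH^{-1}_x$ and two factors $v$ carrying no spatial smoothing beyond $L^\infty_tL^2_x$. Recovering this loss forces the simultaneous use of the resonant modulation algebra above, the full $X^{0,\f12+}$ strength of the $Z$‑norm on both $v$‑factors, the extra time‑frequency localization of $h=T^\ell(u,u)$, and the $\ell^1_\tau$ components of the norms, and the margin is exhausted exactly as $s\uparrow\f23$. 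This is also the reason the estimate is phrased in the $X$‑functional, so that modulation‑regime by modulation‑regime one may choose whichever of $\tilde X$ or $X^{2s-1+,-\f12+}$ is favorable; it is where the most delicate bookkeeping of the proof lies.
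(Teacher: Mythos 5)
Your write-up stops exactly where a proof would have to begin: for the single-$h$ term $\rr(h,v,v)$ you list the tools you expect to be ``forced'' (the resonant modulation algebra, the full $X^{0,\f12+}$ strength of $Z$, the $\ell^1_\tau$ components, the fine structure of $h=T^\ell(u,u)$) and then defer the ``delicate bookkeeping'' without carrying out any estimate, so the hardest case is simply not proven. Worse, the inequality you are aiming at, with $\n{h}{L^2_tH^{-1}_x}$ read literally, is false in the relevant range $\f12\le s<\f23$: take $\wt{v}$ and $\wt{h}$ concentrated at a single high spatial frequency $N$ with modulation $O(1)$. Since $\rr$ is diagonal in $n$ and, as you yourself note, on the resonant set the dispersion and the $\phi$'s cancel, all modulations (input and output) can be $O(1)$ simultaneously, so there is no modulation gain; the left side is then of size $\lan{N}^{2s-1+}\cdot\lan{N}^{2s-1}\,|h_N|\,|v_N|^2$ while the right side is of size $\lan{N}^{-1}\n{h}{}\n{v}{}^2$, and the ratio grows like $\lan{N}^{4s-1+}$. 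No bookkeeping can close that. The $H^{-1}_x$ in the statement is a typo for $H^{1-}_x$, as the hypothesis $h\in Y\cap L^2_tH^{1-}_x$ and the application through Lemma~\ref{le:tell1} make clear, and that is the estimate one should prove.

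With the correct reading the argument is short, and much simpler than what you set up. Because the exponent $-\f12+$ is negative, the modulation weight is bounded by $1$ and is discarded outright -- no resonant modulation identity is needed. The full spatial weight $\lan{n}^{2s-1+}\si(\rr)\lesssim\lan{n}^{4s-2+}\le\lan{n}^{1-}$ (valid for $s<\f34$) is placed entirely on the factor $h$, which is exactly why the hypothesis $h\in L^2_tH^{1-}_x$ is there; nothing at all is asked of the two remaining factors beyond $L^\infty_tL^2_x$. The diagonal structure (an $x$-convolution on the physical side) gives
\[
\|f_1*_xf_2*_xf_3\|_{L^2_{x,t}}\lesssim \|f_i\|_{L^2_{x,t}}\|f_j\|_{L^\infty_tL^2_x}\|f_k\|_{L^\infty_tL^2_x}\lesssim \|f_i\|_{L^2_{x,t}}\|f_j\|_{Y}\|f_k\|_{Y},
\]
by $\ell^2_n\cdot\ell^\infty_n\cdot\ell^\infty_n$ and H\"older in $t$, with $\nab^{1-}h$ playing the role of the $L^2_{x,t}$ factor. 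Your expansion into terms containing at least one $h$ and your use of diagonality match the paper, but you neither exploit that all the derivative loss can be dumped on $h$, nor recognize that this makes the modulation analysis, the $X^{0,\f12+}$ content of $Z$, and the $\ell^1_\tau$ components entirely unnecessary for this lemma.
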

\begin{proof}
Note that at least one $h$ is present in each term in the expansion.  By Lemma~\ref{le:tell1}, $h \in L^2_t H^{1-}_x$.  Note that $\si(\mathcal{R}) \sim \lan{n}^{2s-1}$, and 
\[
\frac{\langle n\rangle^{2s-1+}\sigma(\mathcal{R})}{\langle n\rangle^{1-}} \lesssim \f{\lan{n}^{4s-2+}}{\lan{n}^{1-}}\lesssim 1.
\]
Hence, the Sobolev weights are bounded when $s< \f{3}{4}$.  We can then close this by ignoring the modulation and utilizing the convolution structure in $x$:
\begin{equation}\label{Equation: Resonant Base Estimate}
\|f_1*_xf_2*_xf_3\|_{L^2_{x,t}}\lesssim \left\|\|f_1\|_{L^2_x}\|f_2\|_{L^2_x}\|f_3\|_{L^2_x}\right\|_{L^2_t}\lesssim \|f_i\|_{L^2_{x,t}}\|f_j\|_{Y}\|f_k\|_{Y},
\end{equation}
for any permutation $i,j,k$ of $1,2,3$. Thus we may always place $h$ into $L^2_{x,t}$, and this portion is concluded.
\end{proof}

Finally, we estimate the second term in \eqref{eq:nr}.

\begin{lemma}
Let $s < \frac23$ and $u\in Y$. Then $\mathcal{R}^h$ of Equation \ref{definition: Mod Rest. Resonant} satisfies
\[
\left\|\mathcal{R}^h(u,u,u)\right\|_{X^{2s-1+,-\frac12+}} \lesssim \|u\|_{Y}^3.
\]
\end{lemma}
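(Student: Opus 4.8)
The plan is to reduce $\mathcal{R}^h$ to its resonant Fourier support, to exploit the modulation restriction that is precisely what distinguishes $\mathcal{R}^h$ from $\mathcal{R}^\ell$, and to close with the Strichartz‑type embeddings of Section~\ref{Section: Function Spaces}. Unwinding the definition: since $\mathcal{R}^h = \mathcal{R} - \mathcal{R}^\ell$, where $\mathcal{R}^\ell$ is the resonant part of $3T^\ell(\mathcal{N}(u,u),u)$ and $\mathcal{R}$ is its no‑cutoff counterpart, $\mathcal{R}^h$ is the trilinear operator with symbol $\sigma(\mathcal{R})\bigl(1-\chi(n_1+n_2,\tau_1+\tau_2;n_3)\chi(n_3,\tau_3;n_1+n_2)\bigr)$ supported on $(n_1+n_2)(n_2+n_3)(n_3+n_1)=0$ with $n_1+n_2\neq0$. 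Since $n_1+n_2\neq0$, the resonance forces $n_3=-n_2$ or $n_3=-n_1$, and by the symmetry of $\mathcal{N}(u,u)$ in its first two entries it suffices to treat $n_3=-n_2$; then $n=n_1$, $|n_3|=|n_2|$, and $\langle n\rangle^{2s-1+}|\sigma(\mathcal{R}^h)|\lesssim\langle n_1\rangle^{2s-1+}\langle n_2\rangle^{2s-1}\,[1-\chi\chi]$.

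The core of the argument is the modulation gain. With $\widetilde{L_1}=\langle(\tau_1+\tau_2)-(n_1+n_2)^3-\phi_{n_1+n_2}\rangle$ and $L_j=\langle\tau_j-n_j^3-\phi_{n_j}\rangle$, the thresholds of both $\chi$'s equal $3|n(n_1+n_2)n_3|=3|n_1||n_1+n_2||n_2|$, so $1-\chi\chi\neq0$ forces $\widetilde{L_1}\gg|n_1||n_1+n_2||n_2|$ or $L_3\gg|n_1||n_1+n_2||n_2|$. In the first case the identity $(\tau_1-n_1^3)+(\tau_2-n_2^3)-\bigl((\tau_1+\tau_2)-(n_1+n_2)^3\bigr)=3n_1n_2(n_1+n_2)$ transfers this to $\max\{L_1,L_2\}\gtrsim\widetilde{L_1}$. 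Hence in all cases $L_{\max}:=\max\{L_1,L_2,L_3\}\gg|n_1||n_1+n_2||n_2|$; and since $\min(|n_1|,|n_2|)\,|n_1+n_2|\gtrsim\max(|n_1|,|n_2|)$ by the triangle inequality, we get $L_{\max}\gg\langle n_{\max}\rangle^2\geq\langle n\rangle^2$, so the slot realizing $L_{\max}$ lies in the high‑modulation regime $L\gtrsim\langle n\rangle$ of the $Y$‑norm. Using $s<\tfrac23$,
\[
\langle n\rangle^{2s-1+}\,|\sigma(\mathcal{R}^h)| \;\lesssim\; \langle n_1\rangle^{2s-1+}\langle n_2\rangle^{2s-1} \;\leq\; \langle n_{\max}\rangle^{4s-2+} \;\ll\; L_{\max}^{\,2s-1+} \;\leq\; L_{\max}^{\,1/3-},
\]
so the whole output weight $\langle n\rangle^{2s-1+}$ can be absorbed into the modulation of the dominant slot at a cost strictly below $L_{\max}^{1/3+}$.

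To conclude I would bound $\langle\tau-n^3-\phi_n\rangle^{-1/2+}\leq1$ and estimate $\|\nab^{2s-1+}\mathcal{R}^h(u,u,u)\|_{L^2_{t,x}}$, splitting according to the location of the dominant modulation. If it sits on the slot at frequency $n=n_1$, then $\mathcal{R}^h$ factors as (slot‑one factor)$\times$(a diagonal bilinear sum in $n_2$ carrying no spatial derivative); the latter is bounded in $L^\infty_t\ell^\infty_n$ by $\|\widehat u\|_{\ell^2_nL^1_\tau}^2\lesssim\|u\|_Y^2$ — the $\ell^2_nL^1_\tau$‑component of the $Y$‑norm is present precisely for this — while the slot‑one factor, weighted by $L_{\max}^{1/3-}\lesssim\langle n\rangle^{1/3-}L^{1/3+}$, is controlled in $L^2_{t,x}$ by $\|u\|_Y$. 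If instead the dominant modulation sits on one of the two conjugate‑frequency slots, one distributes $\langle n_2\rangle^{2s-1}$ onto that same slot (permissible as $2s-1<\tfrac13$) and the residual onto its modulation, and closes by H\"older together with the embeddings $Y\hookrightarrow X^{0,1/3+}\hookrightarrow L^4_{t,x}$ and $X^{0+,1/2+}\hookrightarrow L^6_{t,x}$ from Lemmas~\ref{le:l4}--\ref{le:l6}, trading, where needed, one factor's spectrum for the bound $\nab^{-1/2-}u\in L^\infty_{t,x}$.

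The main obstacle is that $\mathcal{R}^h$ lives entirely on the trilinear resonance $(n_1+n_2)(n_2+n_3)(n_3+n_1)=0$, where there is no gain from $H_3$ and the output frequency coincides with an input frequency, so the only available smoothing is the modulation restriction inherited from $T^\ell$. The delicate points are therefore (i) showing that this restriction forces a genuine \emph{input} modulation ($L_1$, $L_2$ or $L_3$) — not merely the intermediate weight $\widetilde{L_1}$ — to exceed $|n_1||n_1+n_2||n_2|$, via the transference identity above, and (ii) verifying that the derivative bookkeeping closes in both cases, which happens precisely because $2s-1<\tfrac13$, matching the range $s<\tfrac23$ of Theorem~\ref{Theorem: 1}.
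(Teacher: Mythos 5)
Your proposal is correct and follows essentially the same route as the paper: restrict to the resonance $n_2+n_3=0$ (so $n=n_1$), use the modulation restriction defining $\mathcal{R}^h$ together with the dispersive identity to force some modulation $\gtrsim |n_1||n_2||n_1+n_2|\gtrsim \langle n_{\max}\rangle^2$, absorb the weight $\langle n\rangle^{2s-1+}\sigma(\mathcal{R}^h)$ into a $\frac13-$ power of that modulation using $2s-1<\frac13$, and close with the diagonal H\"older/convolution estimate using the $\ell^2_nL^1_\tau$ component of $Y$. The only (harmless) difference is bookkeeping: you transfer the gain to the input modulations $\max\{L_1,L_2,L_3\}$ and discard the external weight $\langle\tau-n^3-\phi_n\rangle^{-\frac12+}$, while the paper transfers it to $\max\{L,L_3\}$ and works in $X^{\frac13,-\frac12+}$.
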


\begin{proof}
Since we are filtering out $\mathcal{R}^\ell$, at least two of the modulation weights (out of $L$, $\wt{L_1}$, $L_3$) should be high, with respect to the bilinear dispersive symbol.  That is, say:
\[
\max\{L, L_3\}\gtrsim \wt{L_1} \gg |n(n_1+n_2)n_3|.
\]

Since $n_1 + n_2 \neq 0$ and the other two cases are symmetric, we consider the case $n_2 + n_3=0$, in which case $n_1 = n$. As we are estimating this in $X^{2s-1+, -\frac12+}$ for $s < \frac23$, we are free to instead estimate this in a more symmetric fashion in $X^{\frac13, \frac12+}$. From this, we see that
\[
\f{\langle n\rangle^{\frac13}\si(\mathcal{R}^h)}{\max(L^{\f{1}{3}-}, L_3^{\frac13-})} \lesssim   \f{ \langle n\rangle^{\frac13}\langle n_3\rangle^{2s-1}}{\langle n\rangle^{\frac13-}\langle n-n_3\rangle^{\frac13-}\langle n_3\rangle^{\frac13-}}\lesssim 1,
\]
for any $s < \frac23$. We may then close utilizing almost the same estimate as \eqref{Equation: Resonant Base Estimate}.
\end{proof}

This concludes the estimate of $\mathcal{NR}$.  We have shown:
\begin{equation}\label{eq:nrest}
\n{\mathcal{NR}}{X} \lesssim  C \pr{\n{v}{Z}, \n{u}{Y}}.
\end{equation}

\begin{remark}\label{Remark: Additional smoothing}
    We note that in our estimate of $\mathcal{NR}$, Lemma \ref{le:vkn}, we included an extra $\varepsilon$ factor of $\langle n\rangle$. By $\varepsilon$ pushing we have actually obtained the following: for any $s < \frac{2}{3}$ there is $\varepsilon = \varepsilon(s)$ such that $\|\nab^\varepsilon\mathcal{NR}\|_{X}\lesssim P(\|v\|_{Z}, \|u\|_{Y}, \|f\|_{L^2})$ where $P$ is a polynomial with every term having degree at least $2$.
\end{remark}

\section{Proof of Theorem~\ref{Theorem: 1}}\label{Section: Proof Thm1}

Let $\eta(t)\in\mathcal{S}$, $\eta(t)\equiv 1$ for $|t| < \frac12$, and $\eta(t)\equiv 0$ for $|t| > 1$. Then, \eqref{eq:kdv original} is equivalent to 
\begin{equation}\label{Equation: KdV with cutoff}
    \begin{cases}
        (\partial_t + \partial_x^3)u = \eta(t)^2\partial_x(u^2)\\
       u\vert_{t=0} =  u_0\in H^{-s},
    \end{cases}
\end{equation}
Note that this equation has solutions for small data $\|u_0\|_{H^{-s}}$ at high regularity by standard methods. We then perform the same modification to obtain the equivalent formulation \eqref{eq:kdv}, but with a time cutoff.\\

    While the addition of $\eta$ factors modify the above proofs, we note that they are in $H^\infty_t$. Additionally, all of our spaces are bounded with respect to this cutoff. These two combined facts allow us to disregard this minor technical difficulty.\\

    Additionally, there is a minor technical difficulty in the above with respect to the cancellation. We have technically assumed that for some $K\gg 1$ we have $\chi_{L\gtrsim K}(\eta T^\ell(\eta u, \eta u))$, but we may still perform the above after a reconstitution of the Littlewood-Paley blocks. This reduces us purely to the spatial constraint of Case 3, and hence the result follows again.\\

Now, if $u\big|_{|t|\leq 1}\equiv \tilde{u}\big|_{|t|\leq 1}$ then $h(u) = h(\tilde{u})$; hence, we are free to use restricted spaces with $T = 1$. Let
\[
B = B(f) := \{ \varphi = (\vp_1, \vp_2) \in Y_1\times Z_1\,|\,\tn{\varphi}:= \|\varphi_1\|_{Y_1} + \|\varphi_2\|_{Z_1} \leq C\|f\|_{L^2}\},
\]
where $Y_1$ and $Z_1$ are restricted $Y$ and $Z$ spaces for functions equivalent on $|t| \leq 1$ and $C$ depends solely on $\eta$. Define $\Gamma(\cdot,\cdot ) = \Gamma_f(\cdot, \cdot): Y_1\times Z_1\to Y_1\times Z_1$ via:
\[
\Gamma(\varphi_1, \vp_2) = (\Gamma_1(\varphi_1), \Gamma_2(\varphi_1)),
\]
where
\[
\left|\begin{array}{l}    \Gamma_1(\varphi) = h(\varphi_1) + \Gamma_2(\varphi_1)\\
    \Gamma_2(\varphi) = \eta(2t)W_t(f)f  - \eta(2t)W_T(f)h_0(\varphi_1)+ \eta(2t)\int_0^t \eta(2s)W_{t-s}(f)( \mbox{ RHS of }\eqref{eq:vn})\,ds.
\end{array}\right.
\]

We notice that if $u$ is ever a smooth solution to Equation~\ref{Equation: KdV with cutoff} for $|t| \leq 1$, then $(u,v)$ (as defined in the preceding sections) is a fixed point of $\Gamma$. Similarly, with $(u,v)$ a fixed point of the above, we have that $u$ solves \eqref{Equation: KdV with cutoff}.\\

Now, if $\varphi = (\vp_1, \vp_2)\in B$ then
\[
    \tn{\Gamma(\varphi_1, \vp_2)}\leq C_1\|f\|_{L^2} + P(\|f\|_{L^2}, \n{\vp_1}{Y_1}, \n{\vp_2}{Z_1})\leq C_3\|f\|_{L^2},
\]
as $P$ is an at-least degree $2$ polynomial and $\|f\|_{L^2}\ll 1$. Thus, we can guarantee $\Gamma: B\to B$ if $\n{f}{L^2}$ is sufficiently small. Also,
\[
    \tn{\Gamma(\varphi_1, \vp_2) - \Gamma(\psi_1, \psi_2)}\leq \|f\|_{L^2} P(\n{\vp_1 - \psi_1}{Y_1},\|\vp_2 - \psi_2\|_{Z_1}) \ll \tn{\vp - \psi}
\]
since $\|f\|_{L^2}\ll 1$. Banach's Fixed Point Theorem then yields existence and uniqueness of a solution in $B$, which implies that the solution $u$ satisfies $\lan{\nabla}^{-s} u\in Y_1 \subset C^0_t L^{2}_x$.\\

We now need to show continuous dependence on initial data. Before proceeding with the proof, we note the following Lemma.
\begin{lemma}\label{Lemma: AA basic lemmas}
Assume $\n{f}{L^2} \ll 1$ such that $\Gamma_f$ has a unique fixed point $\vp = (\vp_1, \vp_2)$ in $B(f)$. For $m \geq 1$, denote $P_{\leq m}\vp := (P_{\leq m}\vp_1, P_{\leq m} \vp_2)$ to be the spatial Fourier projections. Then the following statements hold:
    \begin{enumerate}
        \item For any $\varepsilon > 0$, there is an $m = m(f)\gg 1$ such that:
        \begin{equation}\label{Equation: Tail Smoothing Bound}
            \|(I - P_{\leq m})\vp_1\|_{C^0_tL^2_x} + \|(I - P_{\leq m})\vp_2\|_{C^0_tL^2_x} < \varepsilon.
        \end{equation}
        Furthermore, the dependence can be made uniform for a given family $\{f_{n}\}_{n\in\mathbb{N}}$ such that $\ds \lim_{n\to \infty} f_{n} = f$ in $L^2$.
        \item Given $m\geq 1$ and $0\leq s < \f{2}{3}$, there is $\nu = \nu(\|f\|_{L^2}, s) > 0$ so that for any $|t_1|, |t_2| < 1$:
        \begin{equation}\label{Equation: Equicontinuity bound}
            \|P_{\leq m}(\vp_1(t) - \vp_1(s))\|_{L^2_x} + \|P_{\leq m}(\vp_2(t) - \vp_2(s))\|_{L^2_x}
            < C(m, \|f\|_{L^2})|t-s|^\nu.
        \end{equation}
    \end{enumerate}
\end{lemma}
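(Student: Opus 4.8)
The plan is to exploit the extra $\varepsilon$-smoothing recorded in Remark~\ref{Remark: Additional smoothing} together with the fixed-point identity $\vp = \Gamma_f(\vp)$. For part (1), write $\vp_1 = h(\vp_1) + \vp_2$, so it suffices to treat $\vp_2$ and then use Lemma~\ref{le:tell1} to handle $h(\vp_1)$, which already lies in $C^0_t H^{3/2-s-}_x$ with norm $\lesssim \|\vp_1\|_{Y_1}^2$; hence $(I-P_{\leq m})h(\vp_1)$ has $C^0_tL^2_x$-norm $\lesssim m^{-(3/2-s-)}\|\vp_1\|_{Y_1}^2 \to 0$ uniformly. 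For $\vp_2$, I would unwind the Duhamel formula defining $\Gamma_2$: the linear piece $\eta(2t)W_t f$ has tail $\|(I-P_{\leq m})W_t f\|_{C^0_tL^2_x} = \|(I-P_{\leq m})f\|_{L^2_x}\to 0$, and this is where the uniformity over a convergent family $\{f_n\}$ enters — $\|(I-P_{\leq m})f_n\|_{L^2}\le \|(I-P_{\leq m})f\|_{L^2} + \|f_n-f\|_{L^2}$, which can be made small uniformly in $n$ by first fixing $m$ large and then taking $n$ large. The $W_t h_0(\vp_1)$ piece is handled exactly like $h$ above. For the nonlinear Duhamel term, apply the Duhamel/$Z^*$ estimate (Lemma~\ref{le:19}, Lemma~\ref{le:vkn}, and the bounds assembled in \eqref{eq:nrest}) but now with the $\nab^\varepsilon$-improved version from Remark~\ref{Remark: Additional smoothing}: the output sits in $\nab^{-\varepsilon} Z \hookrightarrow \nab^{-\varepsilon}C^0_tL^2_x$ with norm bounded by a polynomial in $\|f\|_{L^2}$, $\|\vp_1\|_{Y_1}$, $\|\vp_2\|_{Z_1}$, every term of degree $\ge 2$; since $\vp\in B(f)$ this is $\lesssim \|f\|_{L^2}^2$, so its tail decays like $m^{-\varepsilon}$, and the polynomial dependence on $\|f\|_{L^2}$ again yields uniformity over the convergent family.

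For part (2), I would again split $\vp_1 = h(\vp_1)+\vp_2$ and estimate the two equicontinuity moduli separately. Since $P_{\leq m}$ is a fixed bounded Fourier projection and everything is frequency-localized to $|n|\le m$, the claim reduces to showing that $P_{\leq m}\vp_1$ and $P_{\leq m}\vp_2$ are $C^{0,\nu}_t$ with values in $L^2_x$. For $h(\vp_1)$: from $h\in X^{1/3-,1/3+}$ (Lemma~\ref{le:tell1}), the time-Fourier transform of $P_{\leq m}h(\vp_1)$ lies in $\langle\tau\rangle^{-1/3-}L^2_\tau\ell^2_{|n|\le m}$ with norm $\lesssim_m \|\vp_1\|_{Y_1}^2$; by the elementary bound $|e^{i t_1\tau}-e^{it_2\tau}|\lesssim |\tau|^\nu|t_1-t_2|^\nu$ for $\nu\in(0,1/3)$ (absorbing $|\tau|^\nu$ into the available $1/3+$ of temporal regularity), one gets a Hölder-$\nu$ modulus with constant depending on $m$ and $\|\vp_1\|_{Y_1}\le C\|f\|_{L^2}$. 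For $\vp_2$: it satisfies $\vp_2 = \eta(2t)W_t f - \eta(2t)W_t h_0(\vp_1) + \eta(2t)\int_0^t\eta(2s)W_{t-s}(\cdots)\,ds$. The two $W_t$-evolution terms are Hölder in $t$ once projected to $|n|\le m$ because $W_t$ has symbol $e^{it(n^3+\phi_n)}$ with $|n^3+\phi_n|\lesssim_m 1$ on the support, so $\|P_{\leq m}(W_{t_1}g - W_{t_2}g)\|_{L^2}\lesssim_m |t_1-t_2|\,\|g\|_{L^2}$ (even Lipschitz), with $g=f$ or $g=h_0(\vp_1)\in H^{3/2-s-}$. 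The Duhamel term is the delicate one: write $\Phi(t) = \int_0^t\eta(2s)W_{t-s}F(s)\,ds$ with $F$ the RHS of \eqref{eq:vn}; then $\Phi(t_1)-\Phi(t_2) = (W_{t_1-t_2}-I)\Phi(t_2) + \int_{t_2}^{t_1}\eta(2s)W_{t_1-s}F(s)\,ds$. The first term is Lipschitz-in-$t$ after $P_{\leq m}$ by the $W_t$ bound above, applied to $\Phi(t_2)\in C^0_tL^2_x$ (whose norm is $\lesssim \|f\|_{L^2}^2$ by the fixed-point/Duhamel estimates); the second is bounded by $|t_1-t_2|\cdot\|F\|_{L^1_tL^2_x}$ on $|n|\le m$, and $\|P_{\leq m}F\|_{L^1_tL^2_x}\lesssim_m \|F\|_X$ is controlled by \eqref{eq:nrest} hence by a polynomial in $\|f\|_{L^2}$. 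Collecting, one obtains \eqref{Equation: Equicontinuity bound} with, say, any $\nu<1/3$ (the binding constraint coming from the $h$-term, $\vp_1-h = \vp_2$ being in fact Lipschitz after projection), and constant $C(m,\|f\|_{L^2})$.

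The main obstacle is bookkeeping rather than a genuine new idea: one must be careful that the polynomial nonlinear estimates assembled in Section~\ref{section: estimates} — in particular the combination of Lemma~\ref{le:vkn}, Lemma~\ref{le:m}, Lemma~\ref{le:r}, and \eqref{eq:nrest} — are being applied to the fixed point $\vp\in B(f)$ so that all outputs are quantitatively small (degree $\ge 2$ in a small quantity), and, for the uniformity claim in (1), that the $m$ is chosen \emph{before} passing to the tail of the sequence $\{f_n\}$, so the Fourier-projection error and the $L^2$-approximation error are handled in the right order. The other mild subtlety is that $\chi$ and $\phi_n$ depend on $f$ (through $\{f_n\}$), so in the uniformity statement one should note that for $n$ large $\|f_n\|_{L^2}$ stays $\ll 1$, keeping $\Gamma_{f_n}$ a contraction on $B(f_n)$ with uniform constants — this is already implicit in the hypothesis $\lim f_n = f$.
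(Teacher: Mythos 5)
Your treatment of part (1) is essentially the paper's argument (tail of $f$, the smoothing of $h$ from Lemma~\ref{le:tell1}, and the $\nab^{\varepsilon}$-improved nonlinear bound of Remark~\ref{Remark: Additional smoothing}), and the uniformity-in-$n$ bookkeeping is handled correctly. The genuine problems are in part (2). For the $h$-term you propose to ``absorb $|\tau|^\nu$ into the available $1/3+$ of temporal regularity'' coming from $h\in X^{1/3-,1/3+}$. This step fails: to convert a frequency-side bound into a pointwise-in-time estimate you need $\int |\tau|^\nu |\wt{h}(\tau,k)|\,d\tau$, and passing from the $L^2_\tau$-based $X^{1/3-,1/3+}$ bound to such an $L^1_\tau$ bound via Cauchy--Schwarz costs strictly more than $1/2$ power of the modulation weight; since $1/3+\nu<1/2$, the weight $\lan{\tau-k^3-\phi_k}^{\nu-1/3-}$ is not square-integrable for any $\nu\geq 0$, and no splitting/interpolation with the unweighted $\ell^2_n\ell^1_\tau$ bound of Lemma~\ref{le:tell1} recovers a positive H\"older exponent (dyadic blocks with $\lan{\tau-k^3-\phi_k}\sim M$ contribute $M^{\nu+1/6-}$, which is not summable). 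The paper's proof instead exploits the modulation restriction built into $T^\ell$: on the support of its symbol $|\tau-k^3|\lesssim |k\,n_1(k-n_1)|$, so for $\nu<\f{1}{9}$ the factor $|\tau|^\nu$ is converted into spatial-frequency factors absorbed by the symbol, and the estimate closes through the convolution structure and the $\ell^2_nL^1_\tau$ component of $Y$. Without this (or some substitute giving more than $1/2$ power of temporal weight on $h$), your $h$-estimate does not close.

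For $\vp_2$ you take a Duhamel-difference route and need $\|P_{\leq m}F\|_{L^1_tL^2_x}\lesssim_m \|F\|_{X}$, where $F$ is the RHS of \eqref{eq:vn}. This is not available: the $X$ functional carries \emph{negative} powers of the modulation ($L^{-1/2+}$, $L^{-1/3+}$, $L^{-1}$), so it does not dominate $L^2_{t,x}$ or $\ell^2_kL^1_\tau$ of $F$ even after the projection $P_{\leq m}$ (the projection constrains $|k|\leq m$, not the modulation), and the paper proves no estimate of the nonlinearity in such a stronger norm. There is also a smaller slip: $\int_{t_2}^{t_1}\|P_{\leq m}F(s)\|_{L^2_x}\,ds$ is bounded by $\|F\|_{L^1([t_2,t_1];L^2_x)}$ with no factor $|t_1-t_2|$; a gain in $|t_1-t_2|$ requires $F\in L^p_tL^2_x$ with $p>1$, which again is not provided. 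The paper sidesteps all of this by never returning to Duhamel for this step: it uses that the fixed point satisfies $\vp_2\in Z$, and the $Z$-weights $w_Z\geq L^{1/2+}$ (indeed $L^{2/3+}$ at high modulation) are exactly strong enough that, for $\nu<\f{1}{6}$, $\lan{\tau-k^3-\phi_k}^{1/2+\nu+}\leq \lan{k}^\nu w_Z(k,L)$, giving $\|P_{\leq m}\vp_2(t)-P_{\leq m}\vp_2(s)\|_{L^2_x}\lesssim |t-s|^\nu m^{4\nu}\|\vp_2\|_{Z}$ directly. You should replace both of your part (2) mechanisms accordingly.
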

\begin{proof}
We first show that we have a smoothing property for high frequency cutoffs. Observe first that by Remark \ref{Remark: Additional smoothing}:
\begin{align*}
\|(I - P_{\leq m})\vp_2\|_{C^0_tL^2_x} &\leq C_1\|P_{> m}f\|_{L^2} + C_2\|P_{>m}\mathcal{NR}\|_{X}\\
&\leq C_1\|P_{>m}f\|_{L^2} + C_2m^{0-}\|J^{0+}\mathcal{NR}\|_{X} \\
&\leq C_1\|P_{>m}f\|_{L^2} + C_2m^{0-}P(\|f\|_{L^2}) < \varepsilon/2,
\end{align*}
as long as $m = m(f)\gg 1$. Now, turning our attention to the $\vp_1$ portion of the estimate, we see that by \ref{le:tell1} and the above:
\[
\|(I - P_{\leq m})\vp_1\|_{C^0_tL^2_x}\leq \|P_{> m}h(\vp_1)\|_{C^0_tL^2_x} + \|P_{> m}\vp_2\|_{C^0_tL^2_x} \leq C_1 m^{-\f{3}{2}+s+}\|\vp_1\|_{Y_1}^2 + \f{\varepsilon}{2}.
\]
Combining these, we obtain \eqref{Equation: Tail Smoothing Bound}. Additionally, the uniform dependence claim of $\{f_n\}_{n\in \mathbb{N}}$ follows trivially from the above.\\

We now seek to show \eqref{Equation: Equicontinuity bound}. We first handle the term $P_{\leq m}T(\vp_1,\vp_1)$. Rewriting this as a Fourier transform, we find:
\begin{multline*}
    \|P_{\leq m}T^\ell(\vp_1,\vp_1)(t) - P_{\leq m}T^\ell (\vp_1,\vp_1)(s)\|_{L^2_x}\\
    \leq \left\|\int_\mathbb{R} |e^{it\tau}-e^{is\tau}||\chi_{\leq m}(k)\mathcal{F}_{x,t}\left(T^\ell(\vp_1,\vp_1)\right)(\tau, k)|\,d\tau\right\|_{\ell^2_k}\\
    \leq |t-s|^{\nu}\left\|\int_\mathbb{R} |\tau|^\nu|\chi_{\leq m}(k)\mathcal{F}_{x,t}\left(T^\ell(\vp_1,\vp_1)\right)(\tau, k)|\,d\tau\right\|_{\ell^2_k}.
\end{multline*}

Considering the modulation restriction of $T^\ell(\cdot, \cdot)$, we note that 
\[
|\tau - k^3| \lesssim k n_1 (k - n_1) \implies |\tau|\lesssim \max(k^3, |kn_1 (k - n_1)|)
\]
where $n_1$ is the spatial frequency of the first entry $\vp_1$.    For $\nu< \f{1}{9}$ (for instance),
\[
|\tau|^\nu|\mathcal{F}_{x,t}\left(T^\ell(\vp_1,\vp_1)\right)(\tau, k)|\lesssim \langle k\rangle^{-1/3}\mathcal{F}_{x,t}\left(\nab^{-2/9}\vp_1\right)*_{\tau,k}\mathcal{F}_{x,t}\left(\nab^{-2/9}\vp_1\right)(\tau,k),
\]
and hence we may control this term by
\begin{multline*}
\left\|\int_\mathbb{R} |\tau|^\nu|\chi_{\leq m}(k)\mathcal{F}_{x,t}\left(T^\ell (\vp_1,\vp_1)\right)(\tau, k)|\,d\tau\right\|_{\ell^2_k}\\
\lesssim \left\|\int_\mathbb{R}\langle k\rangle^{-1/3}\mathcal{F}_{x,t}\left(\nab^{-2/9}\vp_1\right)*_{\tau,k}\mathcal{F}_{x,t}\left(\nab^{-2/9}\vp_1\right)(\tau,k)\,d\tau\right\|_{\ell^2_k}\lesssim \|\vp_1\|_{Y}^2.
\end{multline*}

We turn to $P_{\leq m}\vp_2$, where we again go to the Fourier transform to find:
\begin{align*}
    \|P_{\leq m}\vp_2(t) - P_{\leq m}\vp_2(s)\|_{L^2_x} &\leq \left\|\int_\mathbb{R} |e^{it\tau}-e^{is\tau}||\mathcal{F}_{x,t}\left({P_{\leq m}\vp_2}\right)(k,\tau)|\,d\tau\right\|_{\ell^2_k}\\
    &\leq |t-s|^\nu \left\|\int_\mathbb{R} |\tau|^\nu|\mathcal{F}_{x,t}\left(P_{\leq m}\vp_2\right)(k,\tau)|\,d\tau\right\|_{\ell^2_k}\\
    &\lesssim |t-s|^\nu m^{3\nu}\n{\lan{\tau - k^3 -\phi_k}^{\nu} \cf_{x,t} (P_{\leq m} \vp_2)}{\ell^2_k L^1_\tau}\\
    &\lesssim |t-s|^\nu m^{3\nu}\n{\lan{\tau - k^3 -\phi_k}^{\f{1}{2}+\nu +} \cf_{x,t} (P_{\leq m} \vp_2)}{\ell^2_k L^2_\tau}.
\end{align*}
Say $\nu < \f{1}{6}$.  Then
\begin{multline*}
\lan{\tau - k^3 -\phi_k}^{\f{1}{2}+\nu +}\leq  \lan{\tau - k^3 -\phi_k}^{\f{1}{2} +} \lan{k}^{\nu}\chi_{L\ll |k|}\\
+  \lan{\tau - k^3 -\phi_k}^{\f{2}{3} +}\chi_{L\gtrsim |k|} \leq \lan{k}^\nu w_Z(k,L).
\end{multline*}
Hence, we obtain 
\[
\|P_{\leq m}\vp_2(t) - P_{\leq m}\vp_2(s)\|_{L^2_x}  \lesssim |t-s|^\nu m^{4\nu}\n{ \vp_2}{Z}.
\]

We conclude the proof of the second claim by combining the above two bounds and noting that $\vp\in B(f)$.
\end{proof}

 Notice that if $\{f_{n}\}\in (L^2)^\mathbb{N}$ and $\{(u_n, v_n)\}\in (Y_1\times Z_1)^{\mathbb{N}}$ are solutions emanating from $f_{n}$ for each $n$, then continuous dependence would follow from the statement that $\ds \lim_{n\to \infty} f_{n} =  f$ in $L^2_x$ implies $\ds \lim_{n\to \infty} (u_n, v_n) = (u, v)$ in $(C^0_t([-1,1], L^2_x))^2$.\\

The standard definition of well-posedness requires the data-to-solution map be continuous from $B$ to $X_\delta$ with the $C^0_tH^s_x([-\delta,\delta]\times\mathbb{T})$ topology. As these spaces are first countable, sequential continuity is equivalent to continuity; hence, it suffices to show sequential continuity.\\

We now proceed with the proof of the continuity of the data-to-solution map. 

\begin{lemma}
    Let $f\in L^2_x$. If $\tilde{\Gamma}:L^2_x\to (C^0_tL^2_x)^2$ is the map which takes $f \mapsto (u,v)$, where $(u,v)$ is the unique fixed point of $\Gamma_f (\cdot, \cdot)$ in $B(f)$, then $\tilde{\Gamma}$ is a continuous map.
\end{lemma}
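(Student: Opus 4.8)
The plan is to prove sequential continuity, which by first countability is equivalent to the claimed continuity; the reduction to sequences was already recorded just above the lemma. So suppose $f_n \to f$ in $L^2_x$, write $(u_n,v_n):=\tilde{\Gamma}(f_n)$ and $(u,v):=\tilde{\Gamma}(f)$, and note that each $(u_n,v_n)$ lies in $B(f_n)$, so $\sup_n \tn{(u_n,v_n)}<\infty$ and, for each fixed frequency $n$, the data-dependent phases converge, $\phi_n^{(f_k)}\to\phi_n^{(f)}$ (uniformly over $|n|\le m$, for any $m$). It suffices to show that every subsequence of $\{(u_n,v_n)\}$ admits a further subsequence converging to $(u,v)$ in $\left(C^0_t([-1,1],L^2_x)\right)^2$; this is equivalent to $(u_n,v_n)\to(u,v)$.

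Fix such a subsequence (not relabeled). The first step is a high-frequency truncation: by part (1) of Lemma~\ref{Lemma: AA basic lemmas}, whose uniformity over $\{f_n\}$ is part of the statement, given $\varepsilon>0$ there is $m=m(\varepsilon)$ with $\|(I-P_{\leq m})(u_n,v_n)\|_{C^0_tL^2_x}<\varepsilon$ for all $n$, and likewise for $(u,v)$. So it suffices to understand the low-frequency pieces $P_{\leq m}(u_n,v_n)$. For each fixed $m$ these take values in the finite-dimensional space $P_{\leq m}L^2_x\times P_{\leq m}L^2_x$, are uniformly bounded there, and---by part (2) of Lemma~\ref{Lemma: AA basic lemmas}---are equicontinuous in $t\in[-1,1]$. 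Arzel\`a--Ascoli then gives a subsequence converging in $C^0_tL^2_x$; iterating over $m=1,2,\dots$ and diagonalizing produces one subsequence $\{n_k\}$ and a limit $(U,V)$ with $P_{\leq m}(u_{n_k},v_{n_k})\to P_{\leq m}(U,V)$ in $C^0_tL^2_x$ for every $m$, hence $(u_{n_k},v_{n_k})\to(U,V)$ in $\left(C^0_tL^2_x\right)^2$ by the uniform tail estimate.

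The remaining step is to identify $(U,V)=(u,v)$, and for this it is enough to show $(U,V)\in B(f)$ and $\Gamma_f(U,V)=(U,V)$, since the fixed point of $\Gamma_f$ in $B(f)$ is unique. Membership should follow from lower semicontinuity: by Fatou's lemma applied in the $(\tau,n)$ variables, together with $\phi_n^{(f_{n_k})}\to\phi_n^{(f)}$ and $\|f_{n_k}\|_{L^2}\to\|f\|_{L^2}$, one gets $\tn{(U,V)}\lesssim\liminf_k\tn{(u_{n_k},v_{n_k})}\leq C\|f\|_{L^2}$. For the fixed-point identity I would pass to the limit in $\Gamma_{f_{n_k}}(u_{n_k},v_{n_k})=(u_{n_k},v_{n_k})$ term by term: every data-dependent ingredient of $\Gamma_{f_{n_k}}$---the propagator $W_t^{(f_{n_k})}$, the phase, the cutoffs $\wt{\chi}$ in $T^\ell$ and in the $\mathcal{NR}$ terms of \eqref{eq:vn}, and the auxiliary datum $h_0$ (controlled through Lemma~\ref{le:tell1})---converges on each fixed frequency block, while the multilinear estimates of Section~\ref{section: estimates} together with the extra $\varepsilon$-smoothing of Remark~\ref{Remark: Additional smoothing} provide uniform-in-$k$ bounds serving as dominating functions in $\ell^2_nL^1_\tau$ and $\ell^2_{n,\tau}$. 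Dominated convergence then gives $\Gamma_{f_{n_k}}(u_{n_k},v_{n_k})\to\Gamma_f(U,V)$ in $\left(C^0_tL^2_x\right)^2$, while the left-hand side tends to $(U,V)$; hence $\Gamma_f(U,V)=(U,V)$, so $(U,V)=(u,v)$, completing the argument.

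The main obstacle is precisely this last identification. Because $\phi_n$ depends on the initial data, the maps $\Gamma_{f_n}$ live on genuinely different spaces $Y(f_n)\times Z(f_n)$, so there is no single contraction in which to directly compare $\tilde{\Gamma}(f_n)$ and $\tilde{\Gamma}(f)$; the Arzel\`a--Ascoli reduction sidesteps this, but one still has to check carefully that $\Gamma_{f_n}\to\Gamma_f$ in a sense strong enough to pass the fixed-point relation to the limit---in particular that the data-dependent cutoffs, the shifted modulation weights, and the $\varepsilon$-gain of Remark~\ref{Remark: Additional smoothing} genuinely supply uniform-in-$n$ domination over the entire frequency range, not merely on bounded blocks---and the lower-semicontinuity of the $Y_1\times Z_1$ norm under the data-dependent weights requires similar care.
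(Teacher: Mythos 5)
Your proposal follows essentially the same route as the paper: uniform-in-$n$ tail control and equicontinuity from Lemma~\ref{Lemma: AA basic lemmas}, Arzel\`a--Ascoli on the frequency-truncated pieces, a diagonal argument, and identification of the limit via uniqueness of the fixed point of $\Gamma_f$ in $B(f)$. The only difference is cosmetic: the paper first reduces to smooth approximating data $f_n\in C^\infty$ and leaves the limit-identification step to the single phrase ``uniqueness concludes the argument,'' whereas you spell out (correctly, if schematically) how the data-dependent phases, cutoffs, and the $\varepsilon$-smoothing of Remark~\ref{Remark: Additional smoothing} let one pass to the limit in the fixed-point relation.
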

\begin{proof}
    We first note that it is sufficient to show this for $f_{n}\to f$ in $L^2$, where $f_n\in C^\infty$. Let $\{\varphi_n\}_{n\in \mathbb{N}} = \left\{(u_{n}, v_{n})\right\}_{n\in\mathbb{N}}$ be the fixed points of $\Gamma_{f_{n}}$ in $B(f_{n})$. For, $m\gg 1$, let $P_{\leq m}$ be as in Lemma \ref{Lemma: AA basic lemmas}, and $\varphi_{nm} = P_{\leq m}\varphi_n$. As $f_{n}\to f$, we have that $\{\varphi_{nm}\}_{n,m}$ is a uniformly bounded family in $C^0_t L^2_x$. Furthermore, for any $m > 1$ we have by \eqref{Equation: Equicontinuity bound} and the above that $\{\varphi_{nm}\}_{n}$ has a convergent subsequence in $(C^0_tL^2_x)^2$.  A diagonal argument and uniqueness concludes the argument.
\end{proof}
\section{Bibliography}

\end{document}